\newtheorem{teorema}{Theorem}[section]
\newtheorem{lema}[teorema]{Lemma}
\newtheorem{prop}[teorema]{Proposition}
\theoremstyle{remark}
\newtheorem{afirmativa}{Claim}
\title[Existence and multiplicity results for elliptic critical problems]{Existence and multiplicity results on a class of quasilinear elliptic problems with cylindrical singularities involving multiple critical exponents}
\author[Assun\c{c}\~{a}o]{R.\@ B.\@ Assun\c{c}\~{a}o}
\address{R.\@ B.\@ Assun\c{c}\~{a}o \hfill\break\indent
Departamento de Matem\'{a}tica\,---\,Universidade Federal de Minas Gerais, UFMG\hfill\break\indent
Av.~Ant\^{o}nio Carlos, 6627\,---\,CEP 30161-970\,---\,Belo Horizonte, MG, Brasil}
\email[Corresponding author]{ronaldo@mat.ufmg.br}
\author[Santos]{W.\@ W.\@ dos Santos}
\thanks{W.\@ W.\@ dos Santos was partially supported by CAPES/Reuni.}
\address{W.\@ W.\@ dos Santos \hfill\break\indent
Departamento de Ciências Exatas e Biológicas \hfill\break\indent
Universidade Federal de S\~{a}o Jo\~{a}o del-Rei
\,---\, Campus Sete Lagoas, CSL \hfill\break\indent
Rodovia MG 424\,-\,Km 47\,---\,CEP 35701-970\,---\,Sete Lagoas, MG, Brasil}
\email{weler@ufsj.edu.br}
\author[Miyagaki]{O.\@ H.\@ Miyagaki}
\thanks{O.\@ H.\@ Miyagaki was partially supported by CNPq/Brasil and INCTMAT/Brasil.}
\address{O.\@ H.\@ Miyagaki \hfill\break\indent
Departamento de Matem\'{a}tica\,---\,Universidade Federal de Juiz de Fora, UFJF \hfill\break\indent
Cidade Universit\'{a}ria\,---\,CEP 36036-330\,---\,Juiz de Fora, MG, Brasil} 
\email{ohmiyagaki@gmail.com}
\date{Belo Horizonte, \today}
\keywords{%
Quasilinear elliptic equations, 
$p$-Laplacian operator,
variational methods, 
multiple critical nonlinearities,
multiplicity of solutions,
Nehari manifold.}
\subjclass[2010]{%
Primary:   %
35J20,     
35J92.     
Secondary: %
35B09,     
35B38,     
35J15,     
35J70,     
35J75.  	 
}
\begin{document}

\begin{abstract}
This work deals with the existence of at least two positive solutions for the class of quasilinear elliptic equations with cylindrical singularities and multiple critical nonlinearities that can be written in the form
\begin{align*}
-\operatorname{div}\left[\frac{|\nabla u|^{p-2}}{|y|^{ap}}\nabla u\right]
-\mu\,\frac{u^{p-1}}{|y|^{p(a+1)}}
= h\,\frac{u^{p^*(a,b)-1}}{|y|^{bp^*(a,b)}}
+\lambda g\,\frac{u^{q-1}}{|y|^{cp^*(a,c)}}, 
\qquad (x,y) \in \mathbb{R}^{N-k}\times\mathbb{R}^k.
\end{align*}
We consider $N \geqslant 3$, 
$\lambda >0$, 
$p < k \leqslant N$,
$1<p<N$, 
$0 \leqslant \mu <\bar{\mu} \equiv \left\{[k-p(a+1)]/p\right\}^p$,
$0 \leqslant a < (k-p)/p$,
$a \leqslant b < a+1$, 
$a \leqslant c < a+1$, 
$1\leqslant q <p$,
$p^*(a,b)=Np/[N-p(a+1-b)]$, 
and 
$p^*(a,c) \equiv Np/[N-p(a+1-c)]$; 
in particular, if $\mu = 0$ we can include the cases 
$(k-p)/p \leqslant a < k(N-p)/Np$ and
$a < b< c<k(N-p(a+1))/p(N-k) < a+1$.
We suppose that 
$g\in L\sb{\alpha}\sp{r}(\mathbb{R}\sp{N})$,
where $r = p^*(a,c)/[p^*(a,c)-q]$ and $\alpha = c(p^*(a,c)-q)$,
is positive in a ball and that it can change sign; we also suppose that $h \in L^\infty(\mathbb{R}^k)$ and that it has a finite, positive limit $h_0$ at the origin and at infinity. To prove our results we use the Nehari manifold methods and we establish sufficient conditions to overcome the lack of compactness.
\end{abstract}

\maketitle

\section{Introduction and main results}
\label{sec:intmain}
In this work we study the existence of at least two positive solutions for a class of quasilinear elliptic equations with cylindrical singularities and multiple critical nonlinearities that can be written in the form
\begin{align}
\label{problemamultiplicidade}
-\operatorname{div}\left[\frac{|\nabla u|^{p-2}}{|y|^{ap}}\nabla u\right]
-\mu\,\frac{u^{p-1}}{|y|^{p(a+1)}}
= h\,\frac{u^{p^*(a,b)-1}}{|y|^{bp^*(a,b)}}
+\lambda g\,\frac{u^{q-1}}{|y|^{cp^*(a,c)}}, 
\qquad (x,y) \in \mathbb{R}^{N-k}\times\mathbb{R}^k.
\end{align}
We consider $N \geqslant 3$, 
$\lambda >0$, 
$p < k \leqslant N$,
$1<p<N$, 
$0 \leqslant \mu <\bar{\mu} \equiv \left\{[k-p(a+1)]/p\right\}^p$,
$0 \leqslant a < (k-p)/p$,
$a \leqslant b < a+1$, 
$a \leqslant c < a+1$, 
$1\leqslant q <p$,
$p^*(a,b)=Np/[N-p(a+1-b)]$, 
and 
$p^*(a,c) \equiv Np/[N-p(a+1-c)]$;
in particular, if $\mu = 0$ we can include the cases 
$(k-p)/p \leqslant a < k(N-p)/Np$ and
$a < b< c<k(N-p(a+1))/p(N-k) < a+1$.
We suppose that 
$g\in L\sb{\alpha}\sp{r}(\mathbb{R}\sp{N})$,
where $r = p^*(a,c)/[p^*(a,c)-q]$ and $\alpha = c(p^*(a,c)-q)$,
is positive in a ball and that it can change sign; we also suppose that $h \in L^\infty(\mathbb{R}^k)$ and that it has a finite, positive limit $h_0$ at the origin and at infinity.

This type of problem has some interest in the dynamics of some galaxies. In this case, the cylindrical symmetry of the weights in the differential operator and on the nonlinearities are motivated by the fact that some galaxies are axially symmetric; see Badiale and Tarantello~\cite{MR1918928}. Problem~\eqref{problemamultiplicidade} also appears in the models of several physical phenomena related to the equilibrium of the temperature in an anisotropic medium which is possibly a \lq perfect insulator\rq\  at some points and a \lq perfect conductor\rq\ at other points. Frequently, this problem also models the stationary solutions for the concentration of some substance in a fluid; see Dautray and Lions~\cite{MR1036731} and Ghergu and  R{\u{a}}dulescu~\cite{MR2865669}. The mathematical motivations to study problem~\eqref{problemamultiplicidade} are due to the fact that this problem generalizes some types of quasilinear elliptic equations with the $p$-Laplacian operator and also presents multiple critical nonlinearities with cylindrical weights, and this introduces several analytical difficulties regarding the proofs of existence and multiplicity results.

In what follows we present a very brief historical sketch for this class of problems with emphasis on existence and multiplicity results.
For the Laplacian operator in bounded domains without weights, that is, for $p=2$, $a=0$, $b=0$, $c=0$, $\mu = 0$,  $q=1$
$h \equiv 1$,  $\lambda = 1$, and $g$ in the dual of the function space $H_0^1(\Omega)$, 
Tarantello~\cite{MR1168304} proved the existence of at least two solutions to problem~\eqref{problemamultiplicidade}. 
Ambrosetti, Br\'{e}zis and Cerami~\cite{MR1276168} proved, among other results, that there exists
$\Lambda > 0$ such that problem~\eqref{problemamultiplicidade} has at least two solutions for every $\lambda \in (0,\Lambda)$, also considering $0<q<1<p \leqslant 2N/(N-2)$, $g\equiv 1$, and keeping the other parameters as indicated. In the case of the problem with a spherical singularity on the critical nonlinearity, that is, for $b\neq 0$, 
and for a homogeneous term represented by $\mu \neq 0$, we cite Bouchekif and Matallah~\cite{MR2480011}. 
For some more general cases of the functions $g$ and $h$, with $a=0$, $b=0$, $c \neq 0$, and $\mu = 0$, we mention Abdellaoui and Peral~\cite{MR1884469}; on the other hand, for $ b \neq 0$ and $c = 0$, we cite Hsu and Lin~\cite{MR2651376}. 
Still in the case of problems with spherical weights for which $k = N$, 
the functions $g$ and $h$ not necessarily constants but now for the $p$-Laplacian operator we mention Hsu~\cite{MR2559275}, who studied this class of problems with $a=0$, $b=0$, $c=0$, and $\mu = 0$. For these values of the parameters but with $\mu \neq 0$ we cite Kang, Wang and Wei~\cite{MR2733236}.
Ghoussoub and Yuan~\cite{MR1695021} also studied problem~\eqref{problemamultiplicidade} with a singularity in the critical term, that is, $b\neq 0$, but with the functions $g \equiv 1$ and $h$ a positive constant and obtained several existence results. 
Hsu~\cite{MR2802979} studied multiplicity of solutions for this same class of problems with $h \equiv 1$ and $ c \neq 0$;
afterwards, Hsu and Lin~\cite{MR2903809} introduced a singularity on the operator making $a \neq 0$.
Among other authors that also have studied this class of problems in bounded domains and proved existence and multiplicity results we refer the reader to 
Ferrero and Gazzola~\cite{MR1876652}, 
Cao and Han~\cite{MR2092869}, 
Chen~\cite{MR2072017},
Kang and Peng~\cite{MR2161873}, 
Ghoussoub and Robert~\cite{MR2210661}, and
Hsu and Lin~\cite{MR2525571}.

In the case of unbounded domains, we cite Ghergu and R{\u{a}}dulescu~\cite{MR2179582}, who studied problem~\eqref{problemamultiplicidade} for the Laplacian operator with spherical weights, that is, 
$k = N$, $p=2$ and $a \neq 0$. Furthermore, both authors used a hypothesis on the boundedness of the function $h$ and its behavior at infinity and at the origin, as well as a hypothesis on the integrability of the function $g$,
without the homogeneous term, that is, with $\mu = 0$, with a singularity on the critical nonlinearity, represented by $b\neq 0$, but without the singularity on the subcritical nonlinearity, that is, with $c=0$. 
Their multiplicity result for problem~\eqref{problemamultiplicidade} was proved with the help of Ekeland's variational principle and the mountain pass theorem without the Palais-Smale condition.
In the case of the $p$-Laplacian with $a=0$, $b=0$, $c = 0$, $\mu = 0$, and $h \equiv 1$ but $g$ not necessarily constant, we cite Cao, Li and Zhou~\cite{MR1313196}. 
For $c \neq 0$, $g \equiv 1$ but $h$ not necessarily constant, we mention Abdellaoui, Felli and Peral~\cite{MR2233146}.

All the above mentioned papers involve only spherical singularities, that is, $k = N$. For an example of a multiplicity result for problem~\eqref{problemamultiplicidade} with cylindrical singularities we cite Bouchekif and El Mokhtar~\cite{MR2801239},  who studied problem~\eqref{problemamultiplicidade} in the case $3 \leqslant k \leqslant N$, $p=2$, $a \neq 0$, $b\neq 0$, $c=0$, $\mu \neq 0$, $q=1$ and functions $g$ and $h$ not necessarily constants.

Now we state a result due to Maz'ya which plays a fundamental role in our work, since it allows the variational formulation of problem~\eqref{problemamultiplicidade}.

Let $1 \leqslant k \leqslant N$, 
$z=(x,y) \in \mathbb{R}\sp{N-k}\times\mathbb{R}\sp{k}$, $1 < p < N$, and 
$\mu < \bar{\mu} \equiv \left[(k-p(a+1))/p \right]^p$. Let also  
$a \leqslant b \leqslant a+1$ 
if $a \leqslant (k-p)/p$; 
in particular, if $\mu = 0$ we can include the cases 
$(k-p)/p \leqslant a < k(N-p)/Np$ and
$a < b< c<k(N-p(a+1))/p(N-k) < a+1$.
Then there exists a positive constant 
$K(N,p,\mu,a,b)>0$ such that
\begin{align}
\label{eq:maz}
\left(\int\sb{\mathbb{R}\sp{N}}  \frac{|u(z)|\sp{p^*(a,b)}}{|y|\sp{bp^*(a,b)}} \,dz \right)\sp{p/p^*(a,b)}
\leqslant  C
\left(\int\sb{\mathbb{R}\sp{N}}  \frac{|\nabla u(z)|^p}{|y|\sp{ap}} \,dz
-\mu\int_{\mathbb{R}^N}\frac{|u(z)|^p}{|y|^{p(a+1)}} \, dz\right)
\end{align}
for every function 
$u \in \mathcal{D}\sb{a}\sp{1,p}(\mathbb{R}\sp{N} \backslash \{|y|=0\})$,
where $p^*(a,b)=Np/\left[ N-p(a+1-b)\right]$.

For a proof of inequality~\eqref{eq:maz} in the case 
$\mu = 0$ we refer the reader to the book by 
Maz'ya~\cite[Section 2.1.6]{maz'ya}; for the case $\mu \neq 0$, see the paper by Secchi, Smets and Willem~\cite{MR1990020}, where the value
$\bar{\mu} \equiv \left[(k-p(a+1))/p \right]^p$ is determined. Recall that in the particular case $k=N$, inequality~\eqref{eq:maz} was proved by Caffarelli, Kohn and Nirenberg~\cite{MR768824}; see also Lin~\cite{MR864416} for an inequality involving higher order derivatives in the case $k=N$.

It is worth mentioning that the optimal constant can be defined by
\begin{align}
\label{problemagazzinimusina}
\displaystyle \frac{1}{K(N,p,\mu,a,b)} \equiv 
\inf\sb{\substack{u\in \mathcal{D}_a^{1,p}(\mathbb{R}^N\backslash \{|y|=0\})\\
u \not\equiv 0}} 
\frac{\displaystyle\int\sb{\mathbb{R}\sp{N}} \frac{|\nabla u(z)|\sp{p}}{|y|\sp{ap}}\,dz
-\mu\displaystyle\int\sb{\mathbb{R}\sp{N}} \frac{|u(z)|\sp{p}}{|y|\sp{p(a+1)}}\,dz}
{\left(\displaystyle\int\sb{\mathbb{R}\sp{N}} \frac{|u(z)|\sp{p^*(a,b)}}{|y|\sp{bp^*(a,b)}}\,dz\right)\sp{p/p^*(a,b)}}
\end{align}
which has an independent interest.

Using the ideas from the calculus of variations, the set in which we look for solutions to problem~\eqref{problemamultiplicidade} is the Sobolev space $\mathcal{D}\sb{a}\sp{1,p}(\mathbb{R}\sp{N} \backslash \{|y|=0\})$ defined as the completion of $C\sb{c}\sp{\infty}(\mathbb{R}\sp{N})$, the space of smooth functions with compact support, with respect to the norm defined by
\begin{align}
\label{eq:normgcil}
\|u\|  \equiv \left(\int_{\mathbb{R}\sp{N}}\frac{|\nabla u|^p}{|y|^{ap}}\, dz - \mu\int_{\mathbb{R}\sp{N}}\frac{|u|^p}{|y|^{p(a+1)}}\, dz\right)^{1/p}.
\end{align}

It is a well known fact that 
$\mathcal{D}\sb{a}\sp{1,p}(\mathbb{R}\sp{N} \backslash \{|y|=0\})$ is a reflexive Banach space and that its elements can be identified with measurable functions up to sets of measure zero. Moreover, due to Maz'ya's inequality~\eqref{eq:maz} the embedding
$\mathcal{D}\sb{a}\sp{1,p}(\mathbb{R}\sp{N} \backslash \{|y|=0\}) \hookrightarrow L\sb{b}\sp{p^*(a,b)}(\mathbb{R}\sp{N})$ is continuous, where  
$L\sb{b}\sp{p^*(a,b)}(\mathbb{R}\sp{N})$ 
stands for the Lebesgue space $L\sp{p^*(a,b)}(\mathbb{R}\sp{N}; |y|\sp{-bp^*(a,b)}dz)$ 
with weight $|y|\sp{-bp^*(a,b)}$ and norm defined by
\begin{align*}
\|u\|\sb{L\sb{b}\sp{p^*(a,b)}} & \equiv \left( \displaystyle\int\sb{\mathbb{R}\sp{N}} \frac{|u(z)|\sp{p^*(a,b)}}{|y|\sp{bp^*(a,b)}} \, dz \right)^{1/p^*(a,b)}.
\end{align*}

Using Maz'ya's inequality~\eqref{eq:maz} we can present a variational formulation to problem~\eqref{problemamultiplicidade}. Indeed, to investigate the existence of solutions to  the problem we study the existence of critical points for the energy functional
 $\varphi_\lambda\colon \mathcal{D}\sb{a}\sp{1,p}(\mathbb{R}\sp{N} \backslash \{|y|=0\}) \to \mathbb{R}$, associated in a natural way to problem~\eqref{problemamultiplicidade}, and defined by
\begin{align}\label{funcionalmultiplicidade}
\begin{split}
\varphi_\lambda(u) 
& =\frac{1}{p}\int\sb{\mathbb{R}^N}\frac{|\nabla u|^{p}}{|y|^{ap}}\, dz
-\frac{\mu}{p}\int\sb{\mathbb{R}^N}\frac{|u|^p}{|y|^{p(a+1)}}\, dz \\
& \qquad -\frac{1}{p^*(a,b)}\int\sb{\mathbb{R}^N}\frac{h|u|^{p^*(a,b)}}{|y|^{bp^*(a,b)}}\, dz
-\frac{\lambda}{q}\int\sb{\mathbb{R}^N}\frac{g|u|^{q}}{|y|^{cp^*(a,c)}}\, dz.
\end{split}
\end{align}
It is standard to verify that its G\^{a}teaux derivative is given by 
\begin{align*}
\begin{split}
\langle \varphi'_\lambda (u), v \rangle
& = \int\sb{\mathbb{R}^N}\frac{|\nabla u|^{p-2}}{|y|^{ap}} 
\langle \nabla u , \nabla v \rangle \, dz
-\mu \int\sb{\mathbb{R}^N}\frac{|u|^{p-2}}{|y|^{p(a+1)}} u v \, dz \\
& \qquad -\int\sb{\mathbb{R}^N}\frac{h|u|^{p^*(a,b)-2}}{|y|^{bp^*(a,b)}} u v \, dz
-\lambda \int\sb{\mathbb{R}^N}\frac{g|u|^{q-2}u}{|y|^{cp^*(a,c)}} u v\, dz
\end{split}
\end{align*}
for every function $v \in \mathcal{D}\sb{a}\sp{1,p}(\mathbb{R}\sp{N} \backslash \{|y|=0\})$. 
Moreover, critical points of this functional are weak solutions to problem~\eqref{problemamultiplicidade}.

We remark that in the case 
$\lambda = 1$, $g \equiv 1$, and $h \equiv 1$, 
if the subcritical term and its corresponding cylindrical singularity do not verify the dimensional balance given by Maz'ya's inequality, then problem~\eqref{problemamultiplicidade} does not have solution other than the trivial one. This occurs because there is an \lq asymptotic competition\rq\ between the energy carried by the two nonlinearities; and if one dominates the other, then the weakest one vanishes and we do not obtain nontrivial solutions to problem~\eqref{problemamultiplicidade}. For more details on this phenomenon see 
Filippucci, Pucci, and Robert~\cite{MR2498753}
for the case $p=2$, $a=0$, $b=0$, and $c \neq 0$, 
Xuan and Wang~\cite{MR2606810} for the $p$-Laplacian operator with spherical weights, and also
Assun\c{c}\~{a}o, Miyagaki and Santos~\cite{ams:exist} for the general $p$-Laplacian operator with cylindrical weights represented by $a\neq 0$, $b \neq 0$, $c\neq 0$ and $1 \leq k \leq N$. 
In contrast, the parameter $\lambda$ controls the perturbation of the subcritical nonlinearity and, accordingly, we can recover the solution to the problem.

Inspired by 
Bouchekif and El Mokhtar~\cite{MR2801239} regarding the class of problem and by Tarantello~\cite{MR1168304},  
by Cao, Li and Zhou~\cite{MR1313196}, 
by Ghergu and R{\u{a}}dulescu~\cite{MR2179582}, and by
Hsu~\cite{MR2559275} 
with respect to the several ideas in the proofs, we also obtain an existence and a multiplicity result. 

To state our first result we present additional hypotheses on the functions $g$ and $h$.
\begin{enumerate}[label=($g\sb{\arabic*}$),ref=$g\sb{\arabic*}$]
\item\label{hipoteseg1}
The function $g\colon \mathbb{R}^N \to \mathbb{R}$ is such that $g\in L\sb{\alpha}\sp{r}(\mathbb{R}\sp{N})$,
where $r = p^*(a,c)/[p^*(a,c)-q]$ and $\alpha = c(p^*(a,c)-q)$.
\end{enumerate}
\begin{enumerate}[label=($h\sb{\arabic*}$),ref=$h\sb{\arabic*}$]
\item\label{hipoteseh1}
The function $h\colon \mathbb{R}^k \to \mathbb{R}$ 
is such that $h \in L^\infty(\mathbb{R}^k)$.
\end{enumerate}

Our existence result reads as follows.

\begin{teorema}\label{teoremamultiplicidade1}
Consider the parameters in the already specified intervals. Suppose that the function 
$g \not\equiv 0$ verifies hypothesis~\eqref{hipoteseg1}; suppose also that the function $h \not\equiv 0$ is such that $h(y)>h_0>0$ for every $y\in \mathbb{R}^k$ and verifies hypothesis~\eqref{hipoteseh1}. Then there exists $\Lambda_1 > 0$ such that 
for every $\lambda \in (0, \Lambda_1)$
problem~\eqref{problemamultiplicidade} has at least one positive solution in the space $\mathcal{D}\sb{a}\sp{1,p}(\mathbb{R}\sp{N} \backslash \{|y|=0\})$. 
\end{teorema}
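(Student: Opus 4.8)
The plan is to realize the solution as a minimizer of the energy functional on a suitable piece of the Nehari manifold, following the scheme of Tarantello~\cite{MR1168304} and of Hsu~\cite{MR2559275} adapted to the cylindrical weighted setting. Set
\[
\mathcal{N}_\lambda \equiv \left\{u \in \mathcal{D}_a^{1,p}(\mathbb{R}^N\setminus\{|y|=0\})\setminus\{0\} : \langle \varphi'_\lambda(u), u\rangle = 0\right\},
\]
which contains every nontrivial critical point of $\varphi_\lambda$, and introduce the fibering map $\psi_u(t) \equiv \varphi_\lambda(tu)$ for $t>0$. Splitting $\mathcal{N}_\lambda = \mathcal{N}_\lambda^+ \cup \mathcal{N}_\lambda^0 \cup \mathcal{N}_\lambda^-$ according to the sign of $\psi''_u(1)$, equivalently of $(p-q)\|u\|^p - (p^*(a,b)-q)\int_{\mathbb{R}^N} h|u|^{p^*(a,b)}|y|^{-bp^*(a,b)}\,dz$, the first step is to check that, because $1\leqslant q<p<p^*(a,b)$, the restriction $\varphi_\lambda|_{\mathcal{N}_\lambda}$ is coercive and bounded below. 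Indeed, on $\mathcal{N}_\lambda$ one has $\varphi_\lambda(u) = \left(\tfrac{1}{p}-\tfrac{1}{p^*(a,b)}\right)\|u\|^p - \lambda\left(\tfrac{1}{q}-\tfrac{1}{p^*(a,b)}\right)\int_{\mathbb{R}^N} g|u|^q|y|^{-cp^*(a,c)}\,dz$, and estimating the last integral by Hölder's inequality with exponents $r$ and $p^*(a,c)/q$, hypothesis~\eqref{hipoteseg1}, and Maz'ya's inequality~\eqref{eq:maz} gives $\varphi_\lambda(u) \geqslant A\|u\|^p - \lambda B\|g\|_{L^r_\alpha}\|u\|^q$ with constants $A,B>0$ depending only on the data.

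Next I would show that there exists $\Lambda_1>0$ such that $\mathcal{N}_\lambda^0=\emptyset$ for every $\lambda\in(0,\Lambda_1)$; the routine argument normalizes $u$ and analyses when the two competing monomials in $\psi'_u(t)=0$ can produce a degenerate critical point of $\psi_u$, and shows that for $\lambda$ small this cannot happen. Consequently $\mathcal{N}_\lambda = \mathcal{N}_\lambda^+\cup\mathcal{N}_\lambda^-$ is a disjoint union of two $C^1$ submanifolds, and a Lagrange multiplier computation (using $\mathcal{N}_\lambda^0=\emptyset$ to discard the multiplier) shows that any minimizer of $\varphi_\lambda$ on $\mathcal{N}_\lambda^\pm$ is a free critical point of $\varphi_\lambda$. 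Since $g\not\equiv0$ is positive on a ball, one can choose $u$ with $\int_{\mathbb{R}^N} g|u|^q|y|^{-cp^*(a,c)}\,dz>0$; for such $u$ the map $\psi_u$ has a strict negative local minimum at some $t^+(u)>0$ with $t^+(u)u\in\mathcal{N}_\lambda^+$, so that $c_\lambda^+ \equiv \inf_{\mathcal{N}_\lambda^+}\varphi_\lambda < 0$.

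Now take a minimizing sequence $(u_n)$ for $\varphi_\lambda$ on $\mathcal{N}_\lambda^+$; by Ekeland's variational principle we may assume $\varphi_\lambda(u_n)\to c_\lambda^+$ and $\varphi'_\lambda(u_n)\to 0$ in the dual space. Coercivity yields boundedness, so along a subsequence $u_n\rightharpoonup u$ in $\mathcal{D}_a^{1,p}$, $u_n\to u$ almost everywhere, and $u_n\to u$ in the weighted space associated with the subcritical term (the weight $|y|^{-cp^*(a,c)}$ tested against $g\in L^r_\alpha$ provides a compact embedding because $q<p$). The delicate term is the critical one: by the weighted Brezis--Lieb lemma one has $\|u_n\|^p = \|u\|^p + \|u_n-u\|^p + o(1)$ and $\int h|u_n|^{p^*(a,b)}|y|^{-bp^*(a,b)} = \int h|u|^{p^*(a,b)}|y|^{-bp^*(a,b)} + \int h|u_n-u|^{p^*(a,b)}|y|^{-bp^*(a,b)} + o(1)$. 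Feeding these into $\langle\varphi'_\lambda(u_n),u_n\rangle=o(1)$ and $\langle\varphi'_\lambda(u),u\rangle=0$, one proves the local Palais--Smale condition holds for $\varphi_\lambda$ below the level $c^* \equiv \tfrac{a+1-b}{N}\,h_0\bigl(h_0\,K(N,p,\mu,a,b)\bigr)^{-N/(p(a+1-b))}$, the ground-state level of the limiting autonomous problem obtained by replacing $h$ with the constant $h_0$. Since $c_\lambda^+<0<c^*$, the concentration part satisfies $\|u_n-u\|\to0$, i.e. $u_n\to u$ strongly; hence $u\in\mathcal{N}_\lambda^+$, $\varphi_\lambda(u)=c_\lambda^+$, and $\varphi'_\lambda(u)=0$. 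Finally, replacing $u$ by $|u|$ (which leaves $\varphi_\lambda$ and $\|\cdot\|$ unchanged) we may take $u\geqslant0$, and $u\not\equiv0$ because $c_\lambda^+<0$; a Harnack inequality together with the strong maximum principle for the weighted $p$-Laplacian, applied away from the singular set $\{|y|=0\}$ where the standard regularity theory is available, gives $u>0$, which is the desired positive solution.

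The step I expect to be the main obstacle is the verification of the local Palais--Smale condition for the critical nonlinearity below $c^*$. Because $h$ is only assumed bounded with prescribed finite positive limits $h_0$ at the origin and at infinity, a failure of compactness could in principle occur at the origin, at infinity in the $y$-variable, or at a point of $\{|y|=0\}$, and in each case one must use these limits together with the invariance of the limiting energy under the associated dilations to see that every bubble carries energy at least $c^*$. Controlling the defect measures via concentration--compactness, the weighted Brezis--Lieb lemma, and the behavior of the best constant $K(N,p,\mu,a,b)$ under these dilations is the technical heart of the argument.
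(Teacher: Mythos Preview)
Your overall scheme---Nehari manifold, fibering maps, the split $\mathcal{N}_\lambda=\mathcal{N}_\lambda^+\cup\mathcal{N}_\lambda^-$, $\mathcal{N}_\lambda^0=\emptyset$ for small $\lambda$, Ekeland to produce a Palais--Smale sequence at the negative level $d=d^+$---matches the paper exactly. The divergence is in how you pass from the bounded Palais--Smale sequence to a strongly convergent one. You set up the full Brezis--Lieb/concentration--compactness machinery and aim to show compactness below the critical threshold $c^*$, then observe $c_\lambda^+<0<c^*$. The paper does something much lighter for this first solution: since $u_n\in\mathcal{N}_\lambda$, one has the representation
\[
\varphi_\lambda(u_n)=\Bigl(\tfrac{1}{p}-\tfrac{1}{p^*(a,b)}\Bigr)\|u_n\|^p-\lambda\Bigl(\tfrac{1}{q}-\tfrac{1}{p^*(a,b)}\Bigr)\int_{\mathbb{R}^N}\frac{g|u_n|^q}{|y|^{cp^*(a,c)}}\,dz,
\]
and the $g$-integral converges (by H\"older with $g\in L^r_\alpha$ plus Vitali, using $q<p^*(a,c)$); so if $\|u_1\|<\liminf\|u_n\|$ one gets $d\leqslant\varphi_\lambda(u_1)<\liminf\varphi_\lambda(u_n)=d$, a contradiction. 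No bubble analysis, no comparison with $c^*$, no use of the limits of $h$ at $0$ and $\infty$. Your route would also work, but the ``main obstacle'' you anticipate simply is not present for Theorem~\ref{teoremamultiplicidade1}; that machinery is what the paper deploys only for the \emph{second} solution in Theorem~\ref{teoremamultiplicidade2}.

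One small hypothesis slip: you write ``since $g\not\equiv0$ is positive on a ball''. Positivity on a ball is hypothesis~\eqref{hipoteseg2}, which is \emph{not} assumed in Theorem~\ref{teoremamultiplicidade1}; only~\eqref{hipoteseg1} is. What you actually need to get $\mathcal{N}_\lambda^+\neq\emptyset$ and $d^+<0$ is just the existence of some $u$ with $\int g|u|^q|y|^{-cp^*(a,c)}\,dz>0$, which follows from $g\not\equiv0$ in $L^r_\alpha$ (take $u$ supported where $g>0$ on a set of positive measure, or replace $g$ by $-g$). So the argument is fine once you phrase it that way, but as written you are invoking an assumption that is not available.
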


In our case the standard methods cannot be applied directly to prove that the functional $\varphi_\lambda$ has critical points because the differential operator  $-\operatorname{div}\left[|y|^{-ap}|\nabla u|^{p-2}\nabla u\right]-\mu |y|^{-p(a+1)}u^{p-1}$ 
is not uniformly elliptic. 
It is worth mentioning that the combination of two critical exponents creates several difficulties in the analysis of the problem. 
Hence, to overcome the difficulties that present themselves in the proofs of the results we have to make a detailed analysis of the energy level of the Palais-Smale sequences. 
Since the energy functional is not bounded from below in the space $\mathcal{D}\sb{a}\sp{1,p}(\mathbb{R}\sp{N} \backslash \{|y|=0\})$, 
to prove Theorem~\ref{teoremamultiplicidade1} we show in section~\ref{multiplicidadesecao1} that the energy functional is coercive and bounded from below in the Nehari manifold $\mathcal{N}_\lambda$ associated to the problem. Following up, we identify an interval for the parameter $\lambda$ that excludes possibly degenerated critical points for the functional, which allows us to describe the Nehari manifold as a disjoint union of two subsets, namely,  $\mathcal{N}_\lambda = \mathcal{N}_\lambda^+ \cup \mathcal{N}_\lambda^-$. 
Roughly speaking, these subsets contain the critical points of minima and maxima in $\mathcal{N}_\lambda$, respectively. After that, we define critical levels in these components and, using the hypotheses on the functions $g$ and $h$, we can verify that these levels have different signs. We remark that this is an important step to conclude that the solutions are nontrivial; 
in contrast, Bouchekif and El Mokhtar~\cite{MR2801239} obtain an nontrivial solution promptly from the very nature of their problem.
The next step is to show the existence of Palais-Smale sequences for the energy functional at both the critical levels; this is done through some technical lemmas which apply Ekeland's variational principle and the implicit function theorem. 
This result does not follow directly from the result by Bouchekif and El Mokhtar~\cite{MR2801239}; therefore, in our case we have to adapt some ideas by Hsu~\cite{MR2802979}.
In section~\ref{multiplicidadesecao2} we conclude the proof of the theorem by showing that in fact there exists at least one solution in $\mathcal{N}_\lambda^+$ and that in this case we have a critical point of local minimum in $\mathcal{N}_\lambda$.

To state our second result we present new hypotheses on the functions $g$ and $h$.

\begin{enumerate}[label=($g\sb{\arabic*}$),ref=$g\sb{\arabic*}$,start=2]
\item\label{hipoteseg2}
There exist $\nu_0>0$ and $r_0>0$ such that $g(z)\geqslant \nu_0$, for every $z \in B_{r_0}(0)$.
\end{enumerate}
\begin{enumerate}[label=($h\sb{\arabic*}$),ref=$h\sb{\arabic*}$,start=2]
\item\label{hipoteseh2}
$\lim_{|y|\to 0}h(y) = \lim_{|y| \to \infty} h(y) = h_0> 0$ and $h(y) \geqslant h_0$, for every $y \in \mathbb{R}^k$.
\end{enumerate}

Our multiplicity result reads as follows.
 
\begin{teorema}\label{teoremamultiplicidade2}
Consider the parameters in the already specified intervals. 
Suppose that the hypotheses~\eqref{hipoteseg1},~\eqref{hipoteseg2},~\eqref{hipoteseh1} and~\eqref{hipoteseh2} are verified. Then there exists $\Lambda_0 > 0$ such that for every  $\lambda \in (0, \Lambda_0)$ problem~\eqref{problemamultiplicidade} has at least two positive solutions in the space $\mathcal{D}\sb{a}\sp{1,p}(\mathbb{R}\sp{N} \backslash \{|y|=0\})$ . 
\end{teorema}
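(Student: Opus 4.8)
The plan is to build on Theorem~\ref{teoremamultiplicidade1}, which already supplies a first positive solution $u_1 \in \mathcal{N}_\lambda^+$ obtained as a local minimum of $\varphi_\lambda$ on the Nehari manifold. The second solution will be produced as a minimizer of $\varphi_\lambda$ over the component $\mathcal{N}_\lambda^-$. First I would revisit the fibering-map analysis: for each $u$ with $\int h|u|^{p^*(a,b)}/|y|^{bp^*(a,b)}>0$ and (using hypothesis~\eqref{hipoteseg2}) $\int g|u|^q/|y|^{cp^*(a,c)}>0$, the function $t\mapsto \varphi_\lambda(tu)$ has exactly two critical points $0<t^+(u)<t^-(u)$, with $t^+(u)u\in\mathcal{N}_\lambda^+$ a local minimum and $t^-(u)u\in\mathcal{N}_\lambda^-$ a local maximum along the ray; this requires $\lambda$ small, and I would choose $\Lambda_0\le\Lambda_1$ so that both the splitting $\mathcal{N}_\lambda=\mathcal{N}_\lambda^+\cup\mathcal{N}_\lambda^-$ and the arguments below are valid. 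Set $c_\lambda^- \equiv \inf_{\mathcal{N}_\lambda^-}\varphi_\lambda$; coercivity of $\varphi_\lambda$ on $\mathcal{N}_\lambda$ (established for Theorem~\ref{teoremamultiplicidade1}) shows $c_\lambda^->-\infty$, and in fact $c_\lambda^->0$ on this component.

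The heart of the argument is a local Palais--Smale condition: I would show there is a threshold $c^*$ (depending on the best Maz'ya constant $K(N,p,\mu,a,b)$ and on $h_0$, of the form $c^* = (1/N')\, (h_0)^{-(N-p(a+1-b))/p(1-b+a)}\,K(N,p,\mu,a,b)^{-\text{(something)}}$ minus an $O(\lambda^{p/(p-q)})$ correction) such that $\varphi_\lambda$ satisfies $(PS)_c$ for every $c<c^*$. The proof is the standard concentration-compactness dichotomy adapted to cylindrical weights: a $(PS)_c$ sequence is bounded by coercivity, converges weakly to a critical point $u_0$, and the difference $v_n=u_n-u_0$ either vanishes in $L_b^{p^*(a,b)}$ (giving strong convergence) or carries a quantum of energy at least $c^*$ because the hypotheses~\eqref{hipoteseh1}--\eqref{hipoteseh2} on $h$ (finite positive limits $h_0$ at $0$ and $\infty$, and $h\ge h_0$) force any concentration — whether at the singular set $\{|y|=0\}$, at a finite point, or at infinity — to be governed by the limiting constant-coefficient problem with constant $h_0$, whose ground-state energy is exactly $c^*$. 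This is the step I expect to be the main obstacle, since one must handle concentration along the whole cylinder $\{|y|=0\}$ (not just isolated points) and track the interaction of the two critical exponents $p^*(a,b)$ and $p^*(a,c)$; here I would borrow the machinery from Hsu~\cite{MR2802979} and the companion paper Assun\c c\~ao, Miyagaki and Santos~\cite{ams:exist}.

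Given the local $(PS)$ condition, it remains to certify that $c_\lambda^-<c^*$ so that the infimum over $\mathcal{N}_\lambda^-$ is attained. For this I would use the extremal functions $U_\varepsilon$ for Maz'ya's inequality~\eqref{eq:maz} (which realize $K(N,p,\mu,a,b)$), appropriately truncated and rescaled, and estimate $\sup_{t\ge0}\varphi_\lambda(u_1 + t\, U_\varepsilon)$ — or rather $\varphi_\lambda$ evaluated at the point where the ray through a suitable combination meets $\mathcal{N}_\lambda^-$. A careful expansion in $\varepsilon$, using the behavior of $h$ near its concentration points (where $h\to h_0$) and the sign condition $g\ge\nu_0$ on $B_{r_0}(0)$ from~\eqref{hipoteseg2} to gain a strictly negative lower-order term from the subcritical integral, yields $c_\lambda^- < c^* $ for $\lambda$ small and $\varepsilon$ small; this is where hypotheses~\eqref{hipoteseg2} and~\eqref{hipoteseh2} are essential and where the threshold in the abstract is pinned down. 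Then a minimizing sequence for $c_\lambda^-$ in $\mathcal{N}_\lambda^-$, made into a $(PS)_{c_\lambda^-}$ sequence for $\varphi_\lambda$ via Ekeland's variational principle applied on $\mathcal{N}_\lambda^-$ (exactly as in the proof of Theorem~\ref{teoremamultiplicidade1}), converges strongly to some $u_2\in\mathcal{N}_\lambda^-$ with $\varphi_\lambda(u_2)=c_\lambda^->0$; since $\varphi_\lambda(u_1)=c_\lambda^+<0$, the two solutions are distinct. Finally, replacing $u_2$ by $|u_2|$ (which does not increase the norm and leaves the other terms unchanged, because $g\ge0$ where it matters and $h\ge0$) and invoking the strong maximum principle for the weighted $p$-Laplacian away from $\{|y|=0\}$, we conclude $u_2>0$, completing the proof with $\Lambda_0\equiv\min\{\Lambda_1,\ \text{the bounds from the fibering and energy estimates}\}$.
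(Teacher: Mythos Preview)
Your overall strategy matches the paper's: obtain the second solution as a minimizer of $\varphi_\lambda$ on $\mathcal{N}_\lambda^-$, establish a local Palais--Smale condition below a threshold $d_\lambda^*$ of the form
\[
d_\lambda^* = \left(\tfrac{1}{p}-\tfrac{1}{p^*(a,b)}\right)h_0^{-\frac{p}{p^*(a,b)-p}}K(N,p,\mu,a,b)^{-\frac{p^*(a,b)}{p^*(a,b)-p}} - C_0\lambda^{\frac{p}{p-q}},
\]
verify $d^-<d_\lambda^*$ via extremal functions for Maz'ya's inequality, and conclude by Ekeland. The paper's execution differs from yours in two technical respects, both simplifications. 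First, the compactness step (Lemma~\ref{bouchekiflema4.2}) is not full concentration--compactness but a direct Br\'ezis--Lieb argument: writing $v_n=u_n-u$, one shows $\int h|v_n|^{p^*(a,b)}/|y|^{bp^*(a,b)}\,dz = h_0\int |v_n|^{p^*(a,b)}/|y|^{bp^*(a,b)}\,dz + o(1)$ using only the hypotheses $h\to h_0$ at $0$ and $\infty$ and the local strong convergence on annuli; this avoids tracking concentration along the cylinder. Second, the energy estimate (Lemma~\ref{bouchekiflema4.3}) uses the ray $t\mapsto t\psi_\epsilon$ through the Maz'ya extremal \emph{alone}, not $u_1+tU_\varepsilon$; the lower bound $\varphi_\lambda(u)\geqslant -C_0\lambda^{p/(p-q)}$ already built into $d_\lambda^*$ absorbs the role that $u_1$ would otherwise play.

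One point you should be cautious about: you propose a ``careful expansion in $\varepsilon$'' of the test function. The paper explicitly notes that for general $1<p<N$ the Maz'ya extremals are \emph{not} known explicitly, so the Talenti-type blow-up expansions (as in Hsu's $p=2$ work) are unavailable. The paper instead follows a Cao--Li--Zhou style argument that needs only existence of an extremal (from Bhakta~\cite{MR2934676}) together with the normalization $\|w_\epsilon\|^p=\|w_\epsilon\|_{L_b^{p^*(a,b)}}^{p^*(a,b)}=K^{-p^*(a,b)/(p^*(a,b)-p)}$, and then shows $\sup_{t\geqslant 0}\varphi_\lambda(t\psi_\epsilon)<d_\lambda^*$ by elementary one-variable calculus plus the strict positivity of $\lambda\int g|\psi_\epsilon|^q/|y|^{cp^*(a,c)}\,dz$ furnished by hypothesis~\eqref{hipoteseg2}. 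If your expansion relies on explicit profiles, that step would fail here; if it only uses scaling invariance and the extremal property, it reduces to the paper's argument.
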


The crucial step in the proof of Theorem~\ref{teoremamultiplicidade2} consists in identifying an energy level $d_\lambda^*$ below which we can recover the compactness of the Palais-Smale sequences. To reach this objective we adapt the ideas by Ghergu and R{\u{a}}dulescu~\cite{MR2179582} to compute some limits of sequences of integrals involving the critical term. The next step is to establish appropriate conditions under which this level is not exceeded for sufficiently small perturbations. 
The proofs of these results do not follow directly from the work by 
Bouchekif and El Mokhtar~\cite{MR2801239}; 
in our setting, we adapt some ideas by Cao, Li and Zhou~\cite{MR1313196}. Finally, we conclude the proof of Theorem~\ref{teoremamultiplicidade2} by showing that under the given hypotheses there exists at least one solution 
to problem~\eqref{problemamultiplicidade} in $\mathcal{N}_\lambda^-$; and since this component is disjoint from $\mathcal{N}_\lambda^+$, this solution is different from the other one already proved in Theorem~\ref{teoremamultiplicidade1}.

\section{Auxiliary results}\label{multiplicidadesecao1}

We know that the functional $\varphi_\lambda$ is not bounded from bellow in $\mathcal{D}\sb{a}\sp{1,p}(\mathbb{R}\sp{N} \backslash \{|y|=0\})$. Therefore, we look for solutions to problem~\eqref{problemamultiplicidade} in a subset of $\mathcal{D}\sb{a}\sp{1,p}(\mathbb{R}\sp{N} \backslash \{|y|=0\})$ defined by
\begin{align*}
\mathcal{N}_\lambda = \left\{u \in \mathcal{D}\sb{a}\sp{1,p}(\mathbb{R}\sp{N}\backslash\{|y|=0\}) \colon u \not\equiv 0 
\textrm{ and }\langle\varphi'_\lambda(u),u \rangle=0\right\}
\end{align*}
and called Nehari manifold. 

We claim that $\mathcal{N}_\lambda \neq \emptyset$. 
Indeed, let the functional 
$\psi_\lambda \colon
\mathcal{D}\sb{a}\sp{1,p}(\mathbb{R}\sp{N} \backslash \{|y|=0\}) \to \mathbb{R}$ 
be given by
$\psi_\lambda(u) = \langle \varphi_\lambda(u),u \rangle$.
Since $p^*(a,b) > p$, it follows that $\psi_\lambda(tu) <0$ 
for $t$ big enough and $\psi_\lambda(tu) >0$ for 
$t$ small enough. Then there exists $t>0$ such that 
$\psi_\lambda(tu) =0$, that is, $tu \in \mathcal{N}$.

Direct computations show that 
$u\in \mathcal{N}_\lambda$ if, and only if,
\begin{align}\label{nehari1}
 \int\sb{\mathbb{R}^N}\frac{|\nabla u|^{p}}{|y|^{ap}}\, dz
-\mu\int\sb{\mathbb{R}^N}\frac{|u|^p}{|y|^{p(a+1)}}\, dz
 -\int\sb{\mathbb{R}^N}\frac{h|u|^{p^*(a,b)}}{|y|^{bp^*(a,b)}}\, dz
-\lambda\int\sb{\mathbb{R}^N}\frac{g|u|^{q}}{|y|^{cp^*(a,c)}}\, dz = 0.
\end{align}
We note that the Nehari manifold $\mathcal{N}_\lambda$ contains the set of solutions to problem~\eqref{problemamultiplicidade},
that is, the critical points of $\varphi_\lambda$ belong to $\mathcal{N}_\lambda$  and, as we will see in Lemma~\ref{bouchekiflema2.4}, 
local minimizers for $\varphi_\lambda$ in $\mathcal{N}_\lambda$ are critical points for $\varphi_\lambda$ in general.

The relation~\eqref{nehari1} allows us to obtain the following 
equivalent expressions for the value 
of the functional $\varphi_\lambda(u)$ in an element $u\in\mathcal{N}_\lambda$, namely,
\begin{align}\label{phineharig}
\varphi_\lambda(u) 
& =\left(\frac{1}{p}-\frac{1}{p^*(a,b)}\right)\|u\|^p
 -\lambda\left(\frac{1}{q}-\frac{1}{p^*(a,b)}\right)\int\sb{\mathbb{R}^N}\frac{g|u|^{q}}{|y|^{cp^*(a,c)}}\, dz\\ \label{phineharih}
& =-\left(\frac{1}{q}-\frac{1}{p}\right)\|u\|^p 
 +\left(\frac{1}{q}-\frac{1}{p^*(a,b)}\right)\int\sb{\mathbb{R}^N}\frac{h|u|^{p^*(a,b)}}{|y|^{bp^*(a,b)}}\, dz.
\end{align}

\begin{lema}\label{bouchekiflema2.3}
The functional $\phi_\lambda$ is coercive and bounded from below in the Nehari manifold $\mathcal{N}_\lambda$.
\end{lema}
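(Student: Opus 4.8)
The plan is to work directly from the characterization~\eqref{nehari1} of $\mathcal{N}_\lambda$ together with the representation~\eqref{phineharig} of $\varphi_\lambda$ restricted to $\mathcal{N}_\lambda$. Since~\eqref{phineharig} eliminates the troublesome critical term entirely, for $u\in\mathcal{N}_\lambda$ we have
\begin{align*}
\varphi_\lambda(u)=\left(\frac{1}{p}-\frac{1}{p^*(a,b)}\right)\|u\|^p-\lambda\left(\frac{1}{q}-\frac{1}{p^*(a,b)}\right)\int_{\mathbb{R}^N}\frac{g|u|^q}{|y|^{cp^*(a,c)}}\,dz,
\end{align*}
so the whole problem reduces to controlling the subcritical integral by a suitable power of $\|u\|$.

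First I would estimate the subcritical term. By H\"older's inequality with exponents $r=p^*(a,c)/[p^*(a,c)-q]$ and $p^*(a,c)/q$, writing the integrand as $\bigl(g|y|^{-\alpha}\bigr)\cdot\bigl(|u|^q|y|^{-cq}\bigr)$ with $\alpha=c(p^*(a,c)-q)$, one obtains
\begin{align*}
\int_{\mathbb{R}^N}\frac{g|u|^q}{|y|^{cp^*(a,c)}}\,dz\leqslant \|g\|_{L^r_\alpha}\,\|u\|_{L^{p^*(a,c)}_c}^q.
\end{align*}
Then Maz'ya's inequality~\eqref{eq:maz} (applied with the exponent $c$ in place of $b$, which is legitimate since $a\leqslant c<a+1$) gives the continuous embedding $\mathcal{D}_a^{1,p}\hookrightarrow L^{p^*(a,c)}_c(\mathbb{R}^N)$, hence $\|u\|_{L^{p^*(a,c)}_c}\leqslant C_c\|u\|$ for a constant $C_c=C_c(N,p,\mu,a,c)$. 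Combining these,
\begin{align*}
\int_{\mathbb{R}^N}\frac{g|u|^q}{|y|^{cp^*(a,c)}}\,dz\leqslant C_c^q\,\|g\|_{L^r_\alpha}\,\|u\|^q.
\end{align*}

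Next I would plug this bound into the displayed formula for $\varphi_\lambda(u)$. Since $1\leqslant q<p<p^*(a,b)$, both constants $\frac{1}{p}-\frac{1}{p^*(a,b)}$ and $\frac{1}{q}-\frac{1}{p^*(a,b)}$ are strictly positive, so with $t=\|u\|$ we get
\begin{align*}
\varphi_\lambda(u)\geqslant \left(\frac{1}{p}-\frac{1}{p^*(a,b)}\right)t^p-\lambda\left(\frac{1}{q}-\frac{1}{p^*(a,b)}\right)C_c^q\|g\|_{L^r_\alpha}\,t^q=:\Phi(t).
\end{align*}
Because $q<p$, the function $\Phi(t)=A t^p-\lambda B t^q$ tends to $+\infty$ as $t\to\infty$; this immediately yields coercivity on $\mathcal{N}_\lambda$. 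Moreover $\Phi$ attains a global minimum over $t\geqslant 0$ at $t_*=\bigl(\lambda Bq/(Ap)\bigr)^{1/(p-q)}$, and $\Phi(t_*)$ is a finite negative number depending only on $\lambda$ and the fixed data; hence $\varphi_\lambda(u)\geqslant \Phi(t_*)>-\infty$ for all $u\in\mathcal{N}_\lambda$, which is the asserted lower bound. I would note that this argument needs no smallness assumption on $\lambda$ — it holds for every $\lambda>0$.

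The only genuine subtlety, and the step I would be most careful about, is the very first inequality: justifying that the subcritical integral is finite and that the H\"older splitting is the right one, i.e. checking that the chosen exponents $r$ and $p^*(a,c)/q$ are conjugate and that the residual weight exponent works out to exactly $cp^*(a,c)$ so that the factor $|u|^q|y|^{-cq}$ lands in $L^{p^*(a,c)/q}$ with weight matching $L^{p^*(a,c)}_c$. This is a routine bookkeeping check with the definitions $r=p^*(a,c)/[p^*(a,c)-q]$ and $\alpha=c(p^*(a,c)-q)$ given in hypothesis~\eqref{hipoteseg1}, and once it is done the rest is elementary one-variable calculus on $\Phi$. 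Everything else — reflexivity of the space, the embedding constant — is already available from the discussion preceding the lemma.
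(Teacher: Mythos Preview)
Your argument is correct and essentially identical to the paper's: start from~\eqref{phineharig}, bound the subcritical integral via H\"older with exponents $r$ and $p^*(a,c)/q$ followed by Maz'ya's embedding, obtain the one-variable lower bound $A t^p-\lambda B t^q$, and read off coercivity and the explicit finite minimum. The paper records the embedding constant as $K(N,p,\mu,a,c)^{q/p}$ and the minimum value as $-C_0\lambda^{p/(p-q)}$, but otherwise the reasoning matches step for step.
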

\begin{proof}
To show that the functional $\varphi_\lambda$ is coercive in $\mathcal{N}_\lambda$, first we consider $u \in \mathcal{N}_\lambda$. 
Using H\"{o}lder's inequality and Maz'ya's embedding, from equation~\eqref{phineharig} we get
\begin{align}\label{bouchekif2.3A}
\varphi_\lambda(u) 
& \geqslant \left(\frac{1}{p}-\frac{1}{p^*(a,b)}\right) \|u\|^p
-\lambda\left(\frac{1}{q}-\frac{1}{p^*(a,b)}\right)\|g\|_{L\sb{\alpha}\sp{r}(\mathbb{R}\sp{N})}K(N,p,\mu,a,c)^\frac{q}{p}\|u\|^q.
\end{align}
Coercivity follows from inequality~\eqref{bouchekif2.3A} together with the fact that $1<q<p$.

To show that the funcional $\varphi_\lambda$ is bounded from below in the Nehari manifold, we define the function 
$f\sb{0} \colon \mathbb{R}_*^+ \to \mathbb{R}$ 
by
\begin{align*}
f_0(t) 
\equiv \left(\frac{1}{p}-\frac{1}{p^*(a,b)}\right) t^p
-\lambda\left(\frac{1}{q}-\frac{1}{p^*(a,b)}\right)\|g\|_{L\sb{\alpha}\sp{r}(\mathbb{R}\sp{N})}K(N,p,\mu,a,c)^\frac{q}{p}t^q.
\end{align*}
The minimum of $f\sb{0}$ is 
$f_0(t_{\min}) =-\lambda^\frac{p}{p-q}C_0$,
where
\begin{align*}
C_0 \equiv \left(1-\frac{q}{p}\right)\left(\frac{1}{p}-\frac{1}{p^*(a,b)}\right)^{-\frac{q}{p-q}}
\left(\frac{1}{q}-\frac{1}{p^*(a,b)}\right)^{-\frac{p}{p-q}} \|g\|_{L\sb{\alpha}\sp{r}(\mathbb{R}\sp{N})}^\frac{p}{p-q}K(N,p,\mu,a,c)^\frac{q}{p-q}\left(\frac{q}{p}\right)^\frac{q}{p-q}.
\end{align*}
Hence, returning to inequality~\eqref{bouchekif2.3A} 
and using the previous result, we deduce that
\begin{align}\label{bouchekif2.3B}
\varphi_\lambda(u) \geqslant -\lambda^\frac{p}{p-q}C_0.
\end{align}
Since the function $u\in \mathcal{N}_\lambda$ is arbitrary, we obtain the boundedness from below of the functional $\varphi_\lambda$ in the Nehari manifold $\mathcal{N}_\lambda$.
\end{proof}

Thus, for $u\in \mathcal{N}_\lambda$ we can 
use the definition of the function \( \psi_\lambda\) and equation~\eqref{nehari1} to obtain the equalities
\begin{align}
\langle \psi_\lambda'(u),u\rangle
&= p\|u\|^p -p^*(a,b)\int\sb{\mathbb{R}^N}\frac{h|u|^{p^*(a,b)}}{|y|^{bp^*(a,b)}}\, dz
-\lambda q\int\sb{\mathbb{R}^N}\frac{g|u|^{q}}{|y|^{cp^*(a,c)}}\, dz\label{bouchekif2.4A}\\
&=(p-q)\|u\|^p - (p^*(a,b)-q)\int\sb{\mathbb{R}^N}\frac{h|u|^{p^*(a,b)}}{|y|^{bp^*(a,b)}}\, dz\label{bouchekif2.4B}\\
&=-(p^*(a,b)-p)\|u\|^p+\lambda(p^*(a,b)-q)\int\sb{\mathbb{R}^N}\frac{g|u|^{q}}{|y|^{cp^*(a,c)}}\, dz.\label{bouchekif2.4C}
\end{align}

By the definition of the function $\psi_\lambda$, the subset $\mathcal{N}_\lambda$ can be written as the disjoint union of the three subsets
\[
  \begin{array}{r@{{}\equiv{}}l@{\qquad}r@{{}\equiv{}}l}
    \mathcal{N}_\lambda^+ 
    & \left\{u \in \mathcal{N}_\lambda\,| 
\, \langle \psi_\lambda'(u),u\rangle > 0\right\}, 
    & \mathcal{N}_\lambda^- 
    & \left\{u \in \mathcal{N}_\lambda\,| 
\, \langle \psi_\lambda'(u),u\rangle < 0\right\}, \\[\jot]
\multicolumn{4}{c}{\mathcal{N}_\lambda^0 \equiv
\left\{u \in \mathcal{N}_\lambda\,| 
\, \langle \psi_\lambda'(u),u\rangle = 0\right\}.}
  \end{array}
\]

Now we show that local minimizers in 
$\mathcal{N}_\lambda^+\cup \mathcal{N}_\lambda^-$ 
are critical points of $\varphi_\lambda$.
\begin{lema}\label{bouchekiflema2.4}
Suppose that there exists a local minimizer
$u_0\in\mathcal{N}_\lambda$ 
to the functional $\varphi_\lambda$ 
and that $u_0 \notin \mathcal{N}_\lambda^0$. 
Then $\varphi_\lambda'(u_0)=0$ 
in $\big(\mathcal{D}\sb{a}\sp{1,p}(\mathbb{R}\sp{N} \backslash \{|y|=0\})\big)^*$.
\end{lema}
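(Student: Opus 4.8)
The plan is to use the Lagrange multiplier rule applied to the constrained minimization of $\varphi_\lambda$ on the manifold $\mathcal{N}_\lambda$, and then show that the multiplier must vanish. First I would observe that $\mathcal{N}_\lambda$ is the zero set of the $C^1$ functional $\psi_\lambda(u) = \langle \varphi'_\lambda(u), u\rangle$. Since $u_0$ is a local minimizer of $\varphi_\lambda$ restricted to $\mathcal{N}_\lambda$ and $u_0 \notin \mathcal{N}_\lambda^0$, we have $\langle \psi'_\lambda(u_0), u_0\rangle \neq 0$; in particular $\psi'_\lambda(u_0) \neq 0$, so $0$ is a regular value of $\psi_\lambda$ near $u_0$ and $\mathcal{N}_\lambda$ is locally a $C^1$ submanifold of codimension one. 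The Lagrange multiplier theorem then yields a real number $\theta$ such that $\varphi'_\lambda(u_0) = \theta\, \psi'_\lambda(u_0)$ in $\big(\mathcal{D}\sb{a}\sp{1,p}(\mathbb{R}\sp{N} \backslash \{|y|=0\})\big)^*$.

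Next I would evaluate this identity on the test function $u_0$ itself. On the left, $\langle \varphi'_\lambda(u_0), u_0\rangle = 0$ because $u_0 \in \mathcal{N}_\lambda$ (this is precisely the defining relation~\eqref{nehari1}). Hence $\theta \langle \psi'_\lambda(u_0), u_0\rangle = 0$. But by assumption $u_0 \notin \mathcal{N}_\lambda^0$, which means exactly that $\langle \psi'_\lambda(u_0), u_0\rangle \neq 0$; one of the explicit formulas~\eqref{bouchekif2.4A}--\eqref{bouchekif2.4C} makes this quantity concrete. Dividing out, we conclude $\theta = 0$, and therefore $\varphi'_\lambda(u_0) = 0$ in the dual space, as claimed.

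The main point requiring care is the justification that the Lagrange multiplier rule applies in this infinite-dimensional Banach-space setting: one needs that $\psi_\lambda \in C^1$ (which follows from the standard differentiability of each term of $\varphi_\lambda$ under Maz'ya's embedding~\eqref{eq:maz} and hypotheses~\eqref{hipoteseg1},~\eqref{hipoteseh1}, together with $1<q<p<p^*(a,b)$) and that $u_0$ is a genuine regular point of the constraint, i.e. $\psi'_\lambda(u_0)$ does not vanish. The latter is guaranteed by $u_0 \notin \mathcal{N}_\lambda^0$, since $\langle \psi'_\lambda(u_0), u_0 \rangle \neq 0$ forces $\psi'_\lambda(u_0) \neq 0$. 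An alternative, entirely elementary route avoids invoking the abstract multiplier theorem: for fixed direction $v$, apply the implicit function theorem to the map $(t,s) \mapsto \psi_\lambda(tu_0 + sv)$ near $(1,0)$ — since $\partial_t \psi_\lambda(tu_0+sv)\big|_{(1,0)} = \langle \psi'_\lambda(u_0), u_0\rangle \neq 0$, there is a $C^1$ curve $s \mapsto t(s)$ with $t(0)=1$ keeping $t(s)u_0 + sv \in \mathcal{N}_\lambda$; differentiating $\varphi_\lambda(t(s)u_0 + sv)$ at $s=0$, using that $s=0$ is a local minimum along this curve and that $\langle\varphi'_\lambda(u_0),u_0\rangle = 0$, gives $\langle \varphi'_\lambda(u_0), v\rangle = 0$ for every $v$. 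Either way the conclusion is the same, and I would present whichever is shorter given what has already been set up in the paper.
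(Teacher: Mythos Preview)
Your proposal is correct and follows essentially the same approach as the paper: the paper applies the Lagrange multiplier rule to the constraint $\gamma(u)=\langle\varphi_\lambda'(u),u\rangle=0$ (your $\psi_\lambda$), evaluates the resulting identity at $v=u_0$, and uses $u_0\notin\mathcal{N}_\lambda^0$ to force the multiplier to vanish. If anything, you are more careful than the paper in verifying the regularity hypothesis needed for the multiplier rule, and your alternative implicit-function argument is a nice self-contained option.
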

\begin{proof}
If $u_0 \in \mathcal{N}_\lambda^+ \cup \mathcal{N}_\lambda^-$ is a local minimizer for the functional $\varphi_\lambda$, then $u_0$ solves the following problem: to minimize $\varphi_\lambda$ subjected to $\gamma(u)= 0$, 
where
$$
\gamma(u) = \int\sb{\mathbb{R}^N}\frac{|\nabla u|^{p}}{|y|^{ap}}\, dz
-\mu\int\sb{\mathbb{R}^N}\frac{|u|^p}{|y|^{p(a+1)}}\, dz
 -\int\sb{\mathbb{R}^N}\frac{h|u|^{p^*(a,b)}}{|y|^{bp^*(a,b)}}\, dz
-\lambda\int\sb{\mathbb{R}^N}\frac{g|u|^{q}}{|y|^{cp^*(a,c)}}\, dz
$$
is the constraint. Therefore, by the Lagrange's multipliers theory, there exists $\theta \in \mathbb{R}$ such that $\varphi_\lambda'(u_0)=\theta\gamma'(u_0)$. In this way,
\begin{align*}
\langle \varphi_\lambda'(u_0),v \rangle = \theta\langle\gamma'(u_0),v\rangle, 
\end{align*}
for every 
$v \in \mathcal{D}\sb{a}\sp{1,p}(\mathbb{R}\sp{N} \backslash \{|y|=0\})$. 
In particular, using $v=u_0$ and the fact that
$u_0 \in \mathcal{N}_\lambda^+ \cup \mathcal{N}_\lambda^-$, 
it follows that
\begin{align*}
0=\langle \varphi_\lambda'(u_0),u_0 \rangle = \theta\langle\gamma'(u_0),u_0\rangle.
\end{align*}

On the other hand,
\begin{align*}
\langle\gamma'(u_0),v\rangle
&= p\int\sb{\mathbb{R}^N}\frac{|\nabla u_0|^{p-2}\nabla u_0\nabla v}{|y|^{ap}}\, dz
-\mu p\int\sb{\mathbb{R}^N}\frac{|u_0|^{p-2}u_0v}{|y|^{p(a+1)}}\, dz\\
& \qquad -p^*(a,b)\int\sb{\mathbb{R}^N}\frac{h|u_0|^{p^*(a,b)-2}u_0v}{|y|^{bp^*(a,b)}}\, dz
-\lambda q\int\sb{\mathbb{R}^N}\frac{g|u_0|^{q-2}u_0v}{|y|^{cp^*(a,c)}}\, dz.
\end{align*} 
Using $v=u_0$ in the previous equation together with equation~\eqref{bouchekif2.4A}, we find
\begin{align*}
\langle\gamma'(u_0),u_0\rangle
&=\langle\psi'_\lambda(u_0),u_0\rangle \neq 0,
\end{align*}
where in the last passage we used the fact that
$u_0\notin \mathcal{N}_\lambda^0$. 
We deduce that $\theta = 0$ and, consequently, that 
$\varphi_\lambda'(u_0)=0$ in 
$\big(\mathcal{D}\sb{a}\sp{1,p}(\mathbb{R}\sp{N} \backslash \{|y|=0\})\big)^*$.
The lemma is proved.
\end{proof}

To apply Lemma~\ref{bouchekiflema2.4}, first we have to guarantee that a local minimizer $u_0 \in\mathcal{N}_\lambda$ does not belong to the subset $\mathcal{N}_\lambda^0$. This is the subject of the next result.

\begin{lema}\label{bouchekiflema2.5}
There exist $\Lambda_1 > 0$ such that for every 
$\lambda \in (0,\Lambda_1)$ we have $\mathcal{N}_\lambda^0 = \emptyset$.
\end{lema}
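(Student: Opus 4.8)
The plan is to argue by contradiction and to extract from membership in $\mathcal{N}_\lambda^0$ two incompatible quantitative bounds on $\|u\|$, one from below coming from the critical term and one from above coming from the subcritical term, the latter being of order $\lambda^{1/(p-q)}$ and hence small for small $\lambda$. So suppose that for some $\lambda>0$ there is $u\in\mathcal{N}_\lambda^0$; then $u\not\equiv 0$, $u\in\mathcal{N}_\lambda$, and $\langle\psi_\lambda'(u),u\rangle=0$. Combining this with identity~\eqref{bouchekif2.4B} gives
\[
\int_{\mathbb{R}^N}\frac{h|u|^{p^*(a,b)}}{|y|^{bp^*(a,b)}}\, dz=\frac{p-q}{p^*(a,b)-q}\,\|u\|^p ,
\]
and combining it with identity~\eqref{bouchekif2.4C} gives
\[
\lambda\int_{\mathbb{R}^N}\frac{g|u|^{q}}{|y|^{cp^*(a,c)}}\, dz=\frac{p^*(a,b)-p}{p^*(a,b)-q}\,\|u\|^p .
\]
Since $u\not\equiv 0$, both right-hand sides are strictly positive, so both weighted integrals are positive; in particular the subcritical term genuinely contributes.

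Next I would bound $\|u\|$ from below using the first identity, hypothesis~\eqref{hipoteseh1}, and Maz'ya's inequality~\eqref{eq:maz} with exponent $p^*(a,b)$: this yields
\[
\frac{p-q}{p^*(a,b)-q}\,\|u\|^p\le \|h\|_{L^\infty(\mathbb{R}^k)}\,K(N,p,\mu,a,b)^{p^*(a,b)/p}\,\|u\|^{p^*(a,b)},
\]
and since $p^*(a,b)>p$ this forces $\|u\|\ge A_1$ with an explicit constant $A_1=A_1(N,p,q,\mu,a,b,\|h\|_{L^\infty})>0$. Then I would bound $\|u\|$ from above using the second identity, H\"older's inequality with exponents $p^*(a,c)/q$ and $r$, hypothesis~\eqref{hipoteseg1}, and Maz'ya's inequality~\eqref{eq:maz} with exponent $p^*(a,c)$ --- exactly the chain of estimates already used to obtain~\eqref{bouchekif2.3A} --- which gives
\[
\frac{p^*(a,b)-p}{p^*(a,b)-q}\,\|u\|^p\le \lambda\,\|g\|_{L^r_\alpha(\mathbb{R}^N)}\,K(N,p,\mu,a,c)^{q/p}\,\|u\|^{q},
\]
and since $q<p$ this forces $\|u\|\le A_2\,\lambda^{1/(p-q)}$ with an explicit constant $A_2=A_2(N,p,q,\mu,a,b,c,\|g\|_{L^r_\alpha})\ge 0$.

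Combining the two bounds gives $A_1\le\|u\|\le A_2\,\lambda^{1/(p-q)}$, which is impossible as soon as $A_2\,\lambda^{1/(p-q)}<A_1$. Hence it suffices to take $\Lambda_1:=(A_1/A_2)^{p-q}$ (interpreted as $+\infty$ if $A_2=0$, and intersected with the parameter range already fixed), and then $\mathcal{N}_\lambda^0=\emptyset$ for every $\lambda\in(0,\Lambda_1)$. I do not expect a genuine obstacle here: this is the standard ``no degenerate Nehari points for small $\lambda$'' computation. The only points needing care are bookkeeping --- using the two Maz'ya inequalities with the correct pairs of parameters $(a,b)$ and $(a,c)$, both embeddings being available since $a\le b<a+1$ and $a\le c<a+1$ --- and checking that the constant $\Lambda_1$ produced here is consistent with, indeed can be taken equal to, the constant $\Lambda_1$ of Theorem~\ref{teoremamultiplicidade1}, so that the ensuing decomposition $\mathcal{N}_\lambda=\mathcal{N}_\lambda^+\cup\mathcal{N}_\lambda^-$ holds on the same range of $\lambda$.
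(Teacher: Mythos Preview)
Your proposal is correct and follows essentially the same route as the paper: argue by contradiction, use the identities~\eqref{bouchekif2.4B} and~\eqref{bouchekif2.4C} on $\mathcal{N}_\lambda^0$ to extract a lower bound on $\|u\|$ via Maz'ya with parameters $(a,b)$ and an upper bound of order $\lambda^{1/(p-q)}$ via H\"older and Maz'ya with parameters $(a,c)$, then combine. The paper carries out the same computation explicitly and defines $\Lambda_1$ by the resulting constant, which matches your $(A_1/A_2)^{p-q}$.
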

\begin{proof}
Suppose that $\mathcal{N}_\lambda^0\neq \emptyset$ for some 
$\lambda\in\mathbb{R}$ and let $u\in\mathcal{N}_\lambda^0$. 
By the definition of $\mathcal{N}_\lambda$ 
and by equations~\eqref{bouchekif2.4B} and~\eqref{bouchekif2.4C}, 
we obtain
\begin{align}\label{bouchekif2.6a}
\|u\|^p =\frac{p^*(a,b)-q}{p-q}\int\sb{\mathbb{R}^N}\frac{h|u|^{p^*(a,b)}}{|y|^{bp^*(a,b)}}\, dz
\quad \textrm{and} \quad
\|u\|^p = \lambda\frac{p^*(a,b)-q}{p^*(a,b)-p}\int\sb{\mathbb{R}^N}\frac{g|u|^{q}}{|y|^{cp^*(a,c)}}\, dz.
\end{align}

From the first equation in~\eqref{bouchekif2.6a}, 
using Maz'ya's inequality we determine
\begin{align*}
\left(\frac{p-q}{p^*(a,b)-q}\,\|h\|_{L^\infty}^{-1}K(N,p,\mu,a,b)^{-\frac{p^*(a,b)}{p}} \right)^\frac{1}{p^*(a,b)-p}
\leqslant \|u\|.
\end{align*}

On the other hand, from the the second equation in~\eqref{bouchekif2.6a}, using H\"{o}lder's and Maz'ya's inequalities we find
\begin{align*}
\|u\|
\leqslant \left(\lambda\frac{p^*(a,b)-q}{p^*(a,b)-p}\,\|g\|_{L\sb{\alpha}\sp{r}(\mathbb{R}\sp{N})}
K(N,p,\mu,a,c)^\frac{q}{p}\right)^\frac{1}{p-q}.
\end{align*}

Combining these two inequalities, we get
\begin{align*}
\lambda
& \geqslant (p^*(a,b)-p)(p^*(a,b)-q)^{-\frac{p^*(a,b)-q}{p^*(a,b)-p}}(p-q)^\frac{p-q}{p^*(a,b)-p}\|h\|_{L^\infty}^{-\frac{p-q}{p^*(a,b)-p}}\nonumber\\
& \qquad \times K(N,p,\mu,a,b)^{-\frac{p^*(a,b)(p-q)}{p(p^*(a,b)-p)}}\|g\|_{L\sb{\alpha}\sp{r}(\mathbb{R}\sp{N})}^{-1}K(N,p,\mu,a,c)^{-\frac{q}{p}} \equiv \Lambda_1.
\end{align*}
Hence, if 
$\lambda < \Lambda_1$, then 
$\mathcal{N}_\lambda^0 = \emptyset$. We remark that the value obtained for $\Lambda_1$ is not sharp.  This concludes the proof of the lemma.
\end{proof}

Using Lemma~\ref{bouchekiflema2.5}, for every $\lambda \in (0,\Lambda_1)$ we have
$\mathcal{N}_\lambda=\mathcal{N}_\lambda^+ \cup \mathcal{N}_\lambda^-$. 
Now we define the infima
\begin{align*}
d \equiv \inf_{u\in\mathcal{N}_\lambda} \varphi_\lambda(u),
\quad d^+ \equiv \inf_{u\in\mathcal{N}_\lambda^+} \varphi_\lambda(u),
\quad \text{and} 
\quad d^- \equiv \inf_{u\in\mathcal{N}_\lambda^-} \varphi_\lambda(u).
\end{align*}

\begin{lema}
\begin{enumerate}[label={\upshape(\arabic*)}, align=left, widest=ii, leftmargin=*]
\item\label{bouchekiflema2.6(i)}
If $\lambda \in (0,\Lambda_1)$, then $d\leqslant d^+ <0$.
\item\label{bouchekiflema2.6(ii)}
There exists a constant $C_1 = C_1(\lambda)> 0$ such that if $\lambda \in (0,(q/p)\Lambda_1)$, then $d^- > C_1>0$.
\end{enumerate}
\label{bouchekiflema2.6}
\end{lema}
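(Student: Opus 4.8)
The plan is to treat the two items separately, extracting all the information needed from the algebraic identities~\eqref{bouchekif2.4B} and~\eqref{bouchekif2.4C}, the expressions~\eqref{phineharig} and~\eqref{phineharih} for $\varphi_\lambda$ on $\mathcal{N}_\lambda$, and the coercivity estimate~\eqref{bouchekif2.3A}; no compactness input is required. For part~\ref{bouchekiflema2.6(i)}, the bound $d\leqslant d^+$ is automatic from $\mathcal{N}_\lambda^+\subseteq\mathcal{N}_\lambda$. To obtain $d^+<0$ I would fix an arbitrary $u\in\mathcal{N}_\lambda^+$; from~\eqref{bouchekif2.4C} and $\langle\psi_\lambda'(u),u\rangle>0$ one gets $\lambda\int_{\mathbb{R}^N}\frac{g|u|^q}{|y|^{cp^*(a,c)}}\,dz>\frac{p^*(a,b)-p}{p^*(a,b)-q}\|u\|^p>0$, and since $1/q-1/p^*(a,b)>0$ one may substitute this lower bound into~\eqref{phineharig}; the coefficient collapses and yields $\varphi_\lambda(u)<\frac{p^*(a,b)-p}{p^*(a,b)}\left(\frac1p-\frac1q\right)\|u\|^p<0$, because $q<p$ and $u\not\equiv0$. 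Hence $\varphi_\lambda$ is strictly negative on all of $\mathcal{N}_\lambda^+$, and since $\mathcal{N}_\lambda^+\neq\emptyset$ — the standard fibering analysis of $t\mapsto\varphi_\lambda(tv)$ applied to any $v$ with $\int_{\mathbb{R}^N}\frac{g|v|^q}{|y|^{cp^*(a,c)}}\,dz>0$, which exists by the hypotheses on $g$, produces a point $t_1v\in\mathcal{N}_\lambda^+$ — we conclude $d^+\leqslant\varphi_\lambda(t_1v)<0$.

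For part~\ref{bouchekiflema2.6(ii)} the idea is to push $\mathcal{N}_\lambda^-$ away from the origin. For $u\in\mathcal{N}_\lambda^-$, identity~\eqref{bouchekif2.4B} and $\langle\psi_\lambda'(u),u\rangle<0$ give $\int_{\mathbb{R}^N}\frac{h|u|^{p^*(a,b)}}{|y|^{bp^*(a,b)}}\,dz>\frac{p-q}{p^*(a,b)-q}\|u\|^p$; combining this with $h\in L^\infty(\mathbb{R}^k)$ and Maz'ya's inequality~\eqref{eq:maz}, after cancelling $\|u\|^p>0$, one obtains $\|u\|>A_0$ with
\[
A_0\equiv\left(\frac{p-q}{(p^*(a,b)-q)\|h\|_{L^\infty}}\right)^{\frac{1}{p^*(a,b)-p}}K(N,p,\mu,a,b)^{-\frac{p^*(a,b)}{p(p^*(a,b)-p)}}>0.
\]
Then I would invoke~\eqref{bouchekif2.3A}, namely $\varphi_\lambda(u)\geqslant f_0(\|u\|)$ with $f_0$ the function from the proof of Lemma~\ref{bouchekiflema2.3}, written in the form $f_0(t)=t^q\big[(\tfrac1p-\tfrac1{p^*(a,b)})t^{p-q}-\lambda(\tfrac1q-\tfrac1{p^*(a,b)})\|g\|_{L_\alpha^r(\mathbb{R}^N)}K(N,p,\mu,a,c)^{q/p}\big]$. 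The bracket is increasing in $t$ and strictly positive at $t=A_0$ precisely when $\lambda<\lambda_0$ for an explicit threshold $\lambda_0$; in that range $A_0$ also lies beyond the minimizer of $f_0$, so $f_0$ is increasing on $[A_0,\infty)$ and therefore $\varphi_\lambda(u)\geqslant f_0(\|u\|)>f_0(A_0)>0$ for every $u\in\mathcal{N}_\lambda^-$. A direct computation of the constants shows that $\lambda_0=(q/p)\Lambda_1$, with $\Lambda_1$ as in Lemma~\ref{bouchekiflema2.5}; hence for $\lambda\in(0,(q/p)\Lambda_1)$ the choice $C_1\equiv f_0(A_0)/2>0$ gives $d^-\geqslant f_0(A_0)>C_1>0$.

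The step I expect to be the main obstacle is purely the bookkeeping of constants in part~\ref{bouchekiflema2.6(ii)}, namely verifying that the threshold making $f_0$ positive on $\{t>A_0\}$ is exactly $(q/p)\Lambda_1$. Concretely, $\lambda_0=\frac{q}{p}\cdot\frac{p^*(a,b)-p}{p^*(a,b)-q}\,A_0^{p-q}\,\|g\|_{L_\alpha^r(\mathbb{R}^N)}^{-1}K(N,p,\mu,a,c)^{-q/p}$, and after substituting $A_0^{p-q}$ the powers of $p^*(a,b)-q$ must collapse via $-1-\frac{p-q}{p^*(a,b)-p}=-\frac{p^*(a,b)-q}{p^*(a,b)-p}$, while the remaining factors $\|h\|_{L^\infty}^{-\frac{p-q}{p^*(a,b)-p}}$, $K(N,p,\mu,a,b)^{-\frac{p^*(a,b)(p-q)}{p(p^*(a,b)-p)}}$, $p^*(a,b)-p$ and $\|g\|_{L_\alpha^r(\mathbb{R}^N)}^{-1}K(N,p,\mu,a,c)^{-q/p}$ must match those appearing in the expression for $\Lambda_1$ in Lemma~\ref{bouchekiflema2.5}. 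A minor point is to check that $f_0$ is genuinely increasing past $A_0$ and not merely positive there, but this is automatic: the condition $\lambda<(q/p)\Lambda_1$ already places $A_0$ to the right of the minimizer of $f_0$.
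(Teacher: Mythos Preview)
Your proposal is correct and follows essentially the same approach as the paper. For part~\ref{bouchekiflema2.6(i)} you use the pair \eqref{phineharig}--\eqref{bouchekif2.4C} where the paper uses the dual pair \eqref{phineharih}--\eqref{bouchekif2.4B}, but both collapse to the identical upper bound $\frac{(q-p)(p^*(a,b)-p)}{pqp^*(a,b)}\|u\|^p$; for part~\ref{bouchekiflema2.6(ii)} your argument mirrors the paper's (lower bound $\|u\|>A_0$ from \eqref{bouchekif2.4B} and Maz'ya, then feed into \eqref{bouchekif2.3A}), and your explicit verification that $f_0$ is increasing past $A_0$ and that the positivity threshold is exactly $(q/p)\Lambda_1$ fills in what the paper leaves as ``direct computations.''
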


\begin{proof}
(1) Let $u \in \mathcal{N}_\lambda^+$; 
by equations~\eqref{phineharih} and~\eqref{bouchekif2.4B}, 
and by the definition of $\mathcal{N}_\lambda^+$, it follows that
\begin{align*}
\varphi_\lambda(u)
& \leqslant \frac{(q-p)(p^*(a,b)-p)}{pqp^*(a,b)}\|u\|^p.
\end{align*}
Since $q<p$, we conclude that $d \leqslant d^+ <0$.

(2) Let $u \in \mathcal{N}_\lambda^-$; 
then, by equation~\eqref{bouchekif2.4B}, 
by the definition of $\mathcal{N}_\lambda^-$ 
and by Maz'ya's inequality, we find
\begin{align*}
\|u\| >  
\left(\frac{p-q}{p^*(a,b)-q}\|u\|_{L^\infty}^{-1}K(N,p,\mu,a,b)^{-\frac{p^*(a,b)}{p}}\right)^\frac{1}{p^*(a,b)-p}.
\end{align*}

Combining the previous inequality with inequality~\eqref{bouchekif2.3A}, we obtain
\begin{align*}
\varphi_\lambda(u)
& \geqslant \left(\frac{1}{p}-\frac{1}{p^*(a,b)}\right)(p^*(a,b)-1)^\frac{p}{p^*(a,b)-p}K(N,p,\mu,a,b)^\frac{p^*(a,b)}{p^*(a,b)-p}\\
& \qquad - \lambda\left(1-\frac{1}{p^*(a,b)}\right)(p^*(a,b)-1)^\frac{p}{p^*(a,b)-p}\|g\|_{L\sb{\alpha}\sp{r}(\mathbb{R}\sp{N})}
\equiv C_1(\lambda).
\end{align*}

On the other hand, direct computations show that  $C_1(\lambda)>0$ for $\lambda < (q/p) \Lambda_1.$
This concludes the proof of the lemma.
\end{proof}

Inspired by Tarantello~\cite{MR1168304} and by Hsu~\cite{MR2559275}, we prove the next results.

\begin{lema}\label{hsulema2.7.0}
Let $u\in \mathcal{D}\sb{a}\sp{1,p}(\mathbb{R}\sp{N} \backslash \{|y|=0\})$ be fixed and let the function 
$f_h\colon \mathbb{R}_*^+ \to \mathbb{R}$ be given by
\begin{align*}
f_h(t) = t^{p-q}\|u\|^p - t^{p^*(a,b)-q}\int\sb{\mathbb{R}^N}\frac{h|u|^{p^*(a,b)}}{|y|^{bp^*(a,b)}}\, dz
\end{align*}
The maximum of $f_h$ is such that
\begin{align}\label{hsu2.29}
f_h(t_{\max})
\geqslant \|u\|^q\left(\frac{p^*(a,b)-p}{p^*(a,b)-q}\right)
\left(\frac{p-q}{p^*(a,b)-q}\frac{1}{\|h\|_{L^\infty}K(N,p,\mu,a,b)^\frac{p^*(a,b)}{p}}\right)^\frac{p-q}{p^*(a,b)-p}.
\end{align}
\end{lema}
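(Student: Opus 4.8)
The plan is to regard $f_h$ as the explicit scalar function $t \mapsto A\,t^{p-q} - B\,t^{p^*(a,b)-q}$, where I abbreviate $A \equiv \|u\|^p$ and $B \equiv \int_{\mathbb{R}^N} h\,|u|^{p^*(a,b)}\,|y|^{-bp^*(a,b)}\,dz$, and to locate and evaluate its maximum by elementary one-variable calculus. We may assume $u \not\equiv 0$, for otherwise both sides of~\eqref{hsu2.29} vanish; then $A>0$, and $B>0$ because $h$ is bounded below by $h_0>0$ under the standing hypotheses. Since $p^*(a,b)-q > p-q > 0$, the function $f_h$ is positive for small $t$, tends to $-\infty$ as $t\to\infty$, and
\begin{align*}
f_h'(t) = t^{p-q-1}\Big[(p-q)A - \big(p^*(a,b)-q\big)B\,t^{p^*(a,b)-p}\Big]
\end{align*}
vanishes at a single point $t_{\max}\in\mathbb{R}_*^+$; hence this $t_{\max}$ is the global maximum.

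Next I would solve $f_h'(t_{\max})=0$, which gives $t_{\max}^{p^*(a,b)-p} = (p-q)A\big/\big[(p^*(a,b)-q)B\big]$, and substitute this back into $f_h$. Using the identity
\begin{align*}
f_h(t_{\max}) = A\,t_{\max}^{p-q}\left(1-\frac{p-q}{p^*(a,b)-q}\right) = \frac{p^*(a,b)-p}{p^*(a,b)-q}\,A\,t_{\max}^{p-q}
\end{align*}
together with the value of $t_{\max}$, one arrives at the closed form
\begin{align*}
f_h(t_{\max}) = \frac{p^*(a,b)-p}{p^*(a,b)-q}\left(\frac{p-q}{p^*(a,b)-q}\right)^{\frac{p-q}{p^*(a,b)-p}} A^{\frac{p^*(a,b)-q}{p^*(a,b)-p}}\,B^{-\frac{p-q}{p^*(a,b)-p}}.
\end{align*}

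Finally I would estimate $B$ from above by using $h \leqslant \|h\|_{L^\infty}$ pointwise together with Maz'ya's inequality~\eqref{eq:maz} raised to the power $p^*(a,b)/p$, namely
\begin{align*}
B \leqslant \|h\|_{L^\infty}\int_{\mathbb{R}^N}\frac{|u|^{p^*(a,b)}}{|y|^{bp^*(a,b)}}\,dz \leqslant \|h\|_{L^\infty}\,K(N,p,\mu,a,b)^{\frac{p^*(a,b)}{p}}\,\|u\|^{p^*(a,b)}.
\end{align*}
Because the exponent $-(p-q)/(p^*(a,b)-p)$ is negative, this upper bound on $B$ turns into a lower bound on $B^{-(p-q)/(p^*(a,b)-p)}$; substituting it (and $A=\|u\|^p$) into the closed form above and simplifying the resulting power of $\|u\|$ — which collapses to $\|u\|^q$ because $\big[p(p^*(a,b)-q)-p^*(a,b)(p-q)\big]\big/\big[p^*(a,b)-p\big] = q$ — yields exactly inequality~\eqref{hsu2.29}. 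The only delicate point is the bookkeeping of the exponents in this last substitution; there is no genuine analytic obstacle, since the whole argument reduces to one-variable calculus and the already-established Maz'ya inequality.
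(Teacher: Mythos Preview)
Your proof is correct and follows exactly the approach the paper indicates: the paper's own proof reads simply ``The proof follows directly from hypothesis~\eqref{hipoteseh1} and from Maz'ya's inequality,'' and your argument spells out precisely those two ingredients --- one-variable calculus to locate and evaluate $t_{\max}$, then $h\leqslant\|h\|_{L^\infty}$ together with~\eqref{eq:maz} to bound $B$. Your exponent bookkeeping is accurate, including the collapse to $\|u\|^q$.
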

\begin{proof}
The proof follows directly from hypothesis~\eqref{hipoteseh1} and from Maz'ya's inequality.
\end{proof}
\begin{lema}\label{bouchekiflema2.7}
Let $\lambda \in (0,\Lambda_1)$ 
and let
$u\in \mathcal{D}\sb{a}\sp{1,p}(\mathbb{R}\sp{N} \backslash \{|y|=0\})$. 
Then the following assertions are valid.
\begin{enumerate}[label={\upshape(\arabic*)}, align=left, widest=ii, leftmargin=*]
\item \label{bouchekiflema2.7(i)}
If 
$\displaystyle\int\sb{\mathbb{R}^N}
\frac{g|u|^{q}}{|y|^{cp^*(a,c)}}\, dz \leqslant 0$, 
then there exists a unique number 
$t^- > t_{\max}$ such that 
$t^-u \in \mathcal{N}_\lambda^-$ and 
$\varphi_\lambda(t^-u) = \sup_{t\geqslant 0} \varphi_\lambda(tu)$.
\item \label{bouchekiflema2.7(ii)}
If 
$\displaystyle\int\sb{\mathbb{R}^N}
\frac{g|u|^{q}}{|y|^{cp^*(a,c)}}\, dz > 0$, 
then there exist unique numbers 
$0 < t^+ < t_{\max} < t^-$, such that 
$t^+u \in \mathcal{N}_\lambda^+$ and 
$t^-u \in \mathcal{N}_\lambda^-$; 
moreover,
$\varphi_\lambda(t^+u) = \inf_{0\leqslant t <t_{\max}} \varphi_\lambda(tu) $
and
$\varphi_\lambda(t^-u)=\sup_{t\geqslant 0} \varphi_\lambda(tu)$.
\end{enumerate}
\end{lema}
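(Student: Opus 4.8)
The plan is to study the one-variable \emph{fibering map} $\gamma_u\colon(0,\infty)\to\mathbb{R}$ given by $\gamma_u(t)\equiv\varphi_\lambda(tu)$ and to reduce the whole statement to the qualitative behaviour of the function $f_h$ of Lemma~\ref{hsulema2.7.0}. Assume $u\not\equiv 0$ (otherwise the assertions are vacuous) and abbreviate
\[
A\equiv\|u\|^p>0,\qquad
B\equiv\int_{\mathbb{R}^N}\frac{h|u|^{p^*(a,b)}}{|y|^{bp^*(a,b)}}\,dz>0,\qquad
G\equiv\int_{\mathbb{R}^N}\frac{g|u|^{q}}{|y|^{cp^*(a,c)}}\,dz .
\]
From~\eqref{funcionalmultiplicidade} one gets $\gamma_u(t)=\tfrac{t^p}{p}A-\tfrac{t^{p^*(a,b)}}{p^*(a,b)}B-\tfrac{\lambda t^q}{q}G$, hence
\[
\gamma_u'(t)=t^{q-1}\bigl(f_h(t)-\lambda G\bigr),\qquad f_h(t)=t^{p-q}A-t^{p^*(a,b)-q}B .
\]
Since $tu\in\mathcal{N}_\lambda$ is equivalent to $t\gamma_u'(t)=\langle\varphi_\lambda'(tu),tu\rangle=0$, the dilations of $u$ lying on $\mathcal{N}_\lambda$ correspond exactly to the positive solutions of $f_h(t)=\lambda G$. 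Moreover, inserting $v=tu$ in~\eqref{bouchekif2.4B} one checks the identity $\langle\psi_\lambda'(tu),tu\rangle=t^{q+1}f_h'(t)$ whenever $tu\in\mathcal{N}_\lambda$; therefore such a dilation belongs to $\mathcal{N}_\lambda^+$ or to $\mathcal{N}_\lambda^-$ according to whether $f_h'(t)>0$ or $f_h'(t)<0$.

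The next step is the elementary study of $f_h$: it satisfies $f_h(0^+)=0$ and $f_h(t)\to-\infty$ as $t\to\infty$ (here $p^*(a,b)>p$ is used), it has a unique critical point $t_{\max}>0$, determined by $f_h'(t_{\max})=0$, which is its strict global maximum, and it is strictly increasing on $(0,t_{\max})$ and strictly decreasing on $(t_{\max},\infty)$; in particular $f_h>0$ on $(0,t_{\max}]$ and $f_h(t_{\max})>0$. This strict monotonicity on each side of $t_{\max}$ is precisely what yields the uniqueness of $t^+$ and $t^-$.

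With these preliminaries the two cases are short. If $G\leqslant 0$, then $\lambda G\leqslant 0<f_h(t_{\max})$, and because $f_h>0$ on $(0,t_{\max}]$ the equation $f_h(t)=\lambda G$ has exactly one root, necessarily in $(t_{\max},\infty)$; call it $t^-$. Then $f_h'(t^-)<0$, so $t^-u\in\mathcal{N}_\lambda^-$, while $\gamma_u'(t)=t^{q-1}(f_h(t)-\lambda G)$ is positive on $(0,t^-)$ and negative on $(t^-,\infty)$; hence $\gamma_u$ is increasing then decreasing and $\varphi_\lambda(t^-u)=\sup_{t\geqslant 0}\varphi_\lambda(tu)$, which is~\ref{bouchekiflema2.7(i)}. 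If $G>0$, I first estimate $\lambda G\leqslant\lambda\|g\|_{L_\alpha^r(\mathbb{R}^N)}K(N,p,\mu,a,c)^{q/p}\|u\|^q$ by H\"older's inequality (using hypothesis~\eqref{hipoteseg1}) together with Maz'ya's inequality~\eqref{eq:maz}, and compare this with the lower bound for $f_h(t_{\max})$ supplied by Lemma~\ref{hsulema2.7.0}. The value $\Lambda_1$ produced in Lemma~\ref{bouchekiflema2.5} is tailored precisely so that, for $\lambda\in(0,\Lambda_1)$, these two estimates force $0<\lambda G<f_h(t_{\max})$. It follows that $f_h(t)=\lambda G$ has exactly two roots $0<t^+<t_{\max}<t^-$; since $f_h'(t^+)>0$ and $f_h'(t^-)<0$ we obtain $t^+u\in\mathcal{N}_\lambda^+$ and $t^-u\in\mathcal{N}_\lambda^-$. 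Finally $\gamma_u$ is decreasing on $(0,t^+)$, increasing on $(t^+,t^-)$ and decreasing on $(t^-,\infty)$, which gives $\varphi_\lambda(t^+u)=\inf_{0\leqslant t<t_{\max}}\varphi_\lambda(tu)$ and $\varphi_\lambda(t^-u)=\sup_{t\geqslant 0}\varphi_\lambda(tu)$, i.e.~\ref{bouchekiflema2.7(ii)}.

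I expect the only genuine obstacle to be the quantitative comparison in case~\ref{bouchekiflema2.7(ii)}: one must verify, uniformly in $u$, that the explicit $\Lambda_1$ of Lemma~\ref{bouchekiflema2.5} does guarantee $\lambda G<f_h(t_{\max})$. This amounts to matching the powers of $\|h\|_{L^\infty}$, $\|g\|_{L_\alpha^r(\mathbb{R}^N)}$ and of the constants $K(N,p,\mu,a,b)$, $K(N,p,\mu,a,c)$ appearing on the two sides; the dependence on $u$ cancels because both the bound for $\lambda G$ and the lower bound for $f_h(t_{\max})$ scale like $\|u\|^q$. Everything else — the shape of $f_h$, the counting of roots, the monotonicity pattern of $\gamma_u$, and the fact that $\gamma_u$ extends continuously to $t=0$ with $\gamma_u(0)=0$ when identifying the global supremum — is routine one-variable calculus; the one point requiring a little care is keeping the monotonicity of $f_h$ strict so that the roots $t^+$ and $t^-$ are genuinely unique.
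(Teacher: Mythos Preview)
Your proposal is correct and follows essentially the same fibering-map approach as the paper: both reduce the problem to the equation $f_h(t)=\lambda G$, use the shape of $f_h$ together with Lemma~\ref{hsulema2.7.0}, and in case~(2) combine H\"older's and Maz'ya's inequalities with the explicit value of $\Lambda_1$ to force $0<\lambda G<f_h(t_{\max})$. Your caution about the quantitative comparison is unnecessary: the constant $\Lambda_1$ from Lemma~\ref{bouchekiflema2.5} is defined precisely so that $\Lambda_1\|g\|_{L_{\alpha}^{r}(\mathbb{R}^{N})}K(N,p,\mu,a,c)^{q/p}\|u\|^q$ coincides with the lower bound~\eqref{hsu2.29} for $f_h(t_{\max})$, and the paper carries out this identification explicitly.
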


\begin{proof}
(1) If 
$\displaystyle\int\sb{\mathbb{R}^N}
\frac{g|u|^{q}}{|y|^{cp^*(a,c)}}\, dz \leqslant 0$ 
then there exists a number
$t^->t_{\max}$ such that $f_h(t^-)=\lambda\displaystyle\int\sb{\mathbb{R}^N}\frac{g|u|^{q}}{|y|^{cp^*(a,c)}}\, dz$ and $f'_h(t^-) <0$; moreover, this number is unique. 

Now we show that $t^-u \in \mathcal{N}_\lambda$. 
Indeed,
\begin{align*}
\langle \varphi_\lambda'(t^-u),t^-u\rangle
&= (t^-)^p\|u\|^p - (t^-)^{p^*(a,b)}\int\sb{\mathbb{R}^N}\frac{h|u|^{p^*(a,b)}}{|y|^{bp^*(a,b)}}\, dz
-\lambda (t^-)^q\int\sb{\mathbb{R}^N}\frac{g|u|^{q}}{|y|^{cp^*(a,c)}}\, dz\nonumber\\
&=(t^-)^p\left(f_h(t^-)-\lambda \int\sb{\mathbb{R}^N}\frac{g|u|^{q}}{|y|^{cp^*(a,c)}}\, dz \right)=0.
\end{align*}
And to show that $t^-u \in \mathcal{N}_\lambda^-$, we use the equation~\eqref{bouchekif2.4B} to get
\begin{align*}
&\langle \psi_\lambda'(t^-u),t^-u\rangle
=(t^-)^{q+1}f_h'(t^-)<0.
\end{align*}

Finally, by the definitions of $\varphi_\lambda$ and $f_h$, we deduce that
\begin{align*}
\frac{d}{dt}\varphi_\lambda(t\sp{-}u)
&=(t\sp{-})^{q-1}\left(f_h(t\sp{-}) -\lambda \int\sb{\mathbb{R}^N}\frac{g|u|^{q}}{|y|^{cp^*(a,c)}}\, dz \right) = 0.
\end{align*}
And since
\begin{align*}
f_h'(t)
&=(p-q)t^{p-q-1}\|u\|^p - (p^*(a,b)-q)t^{p^*(a,b)-q-1}\int\sb{\mathbb{R}^N}\frac{h|u|^{p^*(a,b)}}{|y|^{bp^*(a,b)}}\, dz,
\end{align*}
for \( 0 < t < t_{\max} \) we have \( f_h'(t) > 0 \)
and for \( t>t_{\max} \) we have \( f_h'(t) < 0 \). 
Hence, for \( t^- > t_{\max} \) it follows that
\begin{align*}
\frac{d^2}{dt^2}\varphi_\lambda(t^-u)
&=\langle\psi_\lambda'(t^-u),t^-u\rangle\\ 
&= (p-q)\|t^-u\|^p - (p^*(a,b)-q)\int\sb{\mathbb{R}^N}\frac{h|t^-u|^{p^*(a,b)}}{|y|^{bp^*(a,b)}}\, dz
= (t^-)^{q+1}f_h'(t) < 0.
\end{align*}
This implies that $\varphi_\lambda(t^-u) = \sup_{t\geqslant 0}\varphi_\lambda(tu)$.

(2) If 
$\displaystyle\int\sb{\mathbb{R}^N}
\frac{g|u|^{q}}{|y|^{cp^*(a,c)}}\, dz > 0$, 
then using H\"{o}lder's and Maz'ya's inequalities, we deduce that
\begin{align*}
0 = f_h(0) &< \lambda\int\sb{\mathbb{R}^N}\frac{g|u|^{q}}{|y|^{cp^*(a,c)}}\, dz\nonumber\\
& < \Lambda_1\|g\|_{L\sb{\alpha}\sp{r}(\mathbb{R}\sp{N})}K(N,p,\mu,a,c)^\frac{q}{p}\|u\|^q\nonumber\\
&=\|u\|^q\left(\frac{p^*(a,b)-p}{p^*(a,b)-q}\right)
\left(\frac{p-q}{p^*(a,b)-q}\frac{1}{\|h\|_{L^\infty}K(N,p,\mu,a,b)^\frac{p^*(a,b)}{p}}\right)^\frac{p-q}{p^*(a,b)-p}\nonumber\\
&\leqslant f_h(t_{\max})
\end{align*}
for every $\lambda\in(0,\Lambda_1)$, where we have used inequality~\eqref{hsu2.29} in the last passage. So, by the previous inequality there exist numbers $0<t^+<t_{max}<t^-$ such that
\begin{align*}
&f_h(t^+) = \lambda\int\sb{\mathbb{R}^N}\frac{g|u|^{q}}{|y|^{cp^*(a,c)}}\, dz = f_h(t^-) \quad \textrm{and} \quad
f_h'(t^+) > 0 > f_h'(t^-);
\end{align*}
moreover, these numbers are unique.
Computations similar to those in the proof of item~\ref{bouchekiflema2.7(i)} show that 
$t^+u \in \mathcal{N}_\lambda^+$, and 
$t^-u \in \mathcal{N}_\lambda^-$,
as well as 
$\varphi_\lambda(t^-u) \geqslant \varphi_\lambda(tu) \geqslant \varphi_\lambda(t^+u)$ 
for every $t\in[t^+,t^-]$ 
and also 
$\varphi_\lambda(t^+u) \leqslant \varphi_\lambda(tu)$ 
for every $t \in [0,t^+]$. 
In this way,
\begin{align*}
\varphi_\lambda(t^+u) = \inf_{0\leqslant t\leqslant t_{\max}} \varphi_\lambda(tu)
\quad \text{and} \quad \varphi_\lambda(t^-u) = \sup_{t \geqslant 0} \varphi_\lambda(tu).
\end{align*}
This completes the proof of the lemma.
\end{proof}

To close this section we state one more result which will be useful in the proof of the existence of Palais-Smales sequences for the functional 
$\varphi_\lambda$ at the levels $d$ and $d^-$.
\begin{lema}\label{hsulema3.1}
If $\lambda \in (0,\Lambda_1)$, then for every 
$u\in \mathcal{N}_\lambda$ 
there exist $\epsilon > 0$ and a differentiable function 
$\xi \colon B_\epsilon(0) \subset \mathcal{D}\sb{a}\sp{1,p}(\mathbb{R}\sp{N} \backslash \{|y|=0\}) \to \mathbb{R}^+$ 
with $\xi(0)=1$ and such that $\xi(\bar{v})(u-\bar{v}) \in \mathcal{N}_\lambda$  and
\begin{align}\label{hsu3.1}
\langle\xi'(0),v \rangle
&= \frac{\left\{\splitfrac{p \displaystyle\int\sb{\mathbb{R}^N}
\frac{|\nabla u|^{p-2}\nabla u \nabla v}{|y|^{ap}}\, dz
- p \mu \displaystyle\int\sb{\mathbb{R}^N}
\frac{|u|^{p-2}uv}{|y|^{p(a+1)}}\, dz }
{-p^*(a,b) \displaystyle\int\sb{\mathbb{R}^N}
\frac{h|u|^{p^*(a,b)-2}uv}{|y|^{bp^*(a,b)}}\, dz
-\lambda q \displaystyle\int\sb{\mathbb{R}^N}\frac{g|u|^{q-2}uv}{|y|^{cp^*(a,c)}}\, dz}\right\}}
{(p-q)\|u\|^p-(p^*(a,b)-q) \displaystyle\int\sb{\mathbb{R}^N}\frac{h|u|^{p^*(a,b)}}{|y|^{bp^*(a,b)}}\, dz}
\end{align}
for every $v \in \mathcal{D}\sb{a}\sp{1,p}(\mathbb{R}\sp{N} \backslash \{|y|=0\})$. In particular, if 
$u \in \mathcal{N}_\lambda^-$ then $\xi^-(\bar{v})(u-\bar{v}) \in \mathcal{N}_\lambda^-$.
\end{lema}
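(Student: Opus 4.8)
The plan is to apply the implicit function theorem in the Banach space $\mathcal{D}_a^{1,p}(\mathbb{R}^N\backslash\{|y|=0\})$ to an auxiliary function whose zero set encodes membership in $\mathcal{N}_\lambda$. Fixing $u\in\mathcal{N}_\lambda$, I would define $F\colon\mathbb{R}_*^+\times\mathcal{D}_a^{1,p}(\mathbb{R}^N\backslash\{|y|=0\})\to\mathbb{R}$ by
\[
F(t,w)=t^p\|u-w\|^p-t^{p^*(a,b)}\int_{\mathbb{R}^N}\frac{h|u-w|^{p^*(a,b)}}{|y|^{bp^*(a,b)}}\, dz-\lambda t^q\int_{\mathbb{R}^N}\frac{g|u-w|^{q}}{|y|^{cp^*(a,c)}}\, dz,
\]
so that $F(t,w)=\langle\varphi_\lambda'(t(u-w)),t(u-w)\rangle$ and, in particular, $F(1,0)=\langle\varphi_\lambda'(u),u\rangle=0$ since $u\in\mathcal{N}_\lambda$. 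Using Maz'ya's inequality~\eqref{eq:maz} together with H\"older's inequality and hypotheses~\eqref{hipoteseg1} and~\eqref{hipoteseh1} to control the critical and the subcritical terms, one checks in the usual way that $F$ is of class $C^1$ on a neighborhood of $(1,0)$.

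Next I would compute the partial derivative of $F$ with respect to $t$ at $(1,0)$, namely
\[
\frac{\partial F}{\partial t}(1,0)=p\|u\|^p-p^*(a,b)\int_{\mathbb{R}^N}\frac{h|u|^{p^*(a,b)}}{|y|^{bp^*(a,b)}}\, dz-\lambda q\int_{\mathbb{R}^N}\frac{g|u|^{q}}{|y|^{cp^*(a,c)}}\, dz=\langle\psi_\lambda'(u),u\rangle,
\]
the last equality being~\eqref{bouchekif2.4A}. Since $\lambda\in(0,\Lambda_1)$, Lemma~\ref{bouchekiflema2.5} gives $\mathcal{N}_\lambda^0=\emptyset$, hence $u\notin\mathcal{N}_\lambda^0$ and $\langle\psi_\lambda'(u),u\rangle\neq0$; therefore $\frac{\partial F}{\partial t}(1,0)\neq0$. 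The implicit function theorem then provides $\epsilon>0$ and a differentiable function $\xi\colon B_\epsilon(0)\to\mathbb{R}$ with $\xi(0)=1$ and $F(\xi(\bar v),\bar v)=0$ for every $\bar v\in B_\epsilon(0)$; shrinking $\epsilon$ if necessary I may assume that $\xi>0$ and $u-\bar v\not\equiv0$ on $B_\epsilon(0)$, so that $\xi$ takes values in $\mathbb{R}^+$ and $\xi(\bar v)(u-\bar v)\in\mathcal{N}_\lambda$.

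To arrive at formula~\eqref{hsu3.1}, I would differentiate the identity $F(\xi(\bar v),\bar v)\equiv0$ at $\bar v=0$ in a direction $v$, which yields $\frac{\partial F}{\partial t}(1,0)\,\langle\xi'(0),v\rangle+\langle D_wF(1,0),v\rangle=0$. Computing $\langle D_wF(1,0),v\rangle$ by the chain rule (the inner map $w\mapsto u-w$ contributes a minus sign to each term), solving for $\langle\xi'(0),v\rangle$, and rewriting the resulting denominator $\langle\psi_\lambda'(u),u\rangle$ by means of~\eqref{bouchekif2.4B} produces exactly the right-hand side of~\eqref{hsu3.1}. For the last assertion, if $u\in\mathcal{N}_\lambda^-$ then $\langle\psi_\lambda'(u),u\rangle<0$; since the map $\bar v\mapsto\langle\psi_\lambda'(\xi(\bar v)(u-\bar v)),\xi(\bar v)(u-\bar v)\rangle$ is continuous and equals $\langle\psi_\lambda'(u),u\rangle<0$ at $\bar v=0$, a further reduction of $\epsilon$ keeps it negative on $B_\epsilon(0)$, and hence $\xi^-(\bar v)(u-\bar v)\in\mathcal{N}_\lambda^-$.

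The computations are routine; the one step that demands care is verifying that $F$ is continuously Fr\'echet differentiable near $(1,0)$, which is delicate because of the singular weights $|y|^{-ap}$, $|y|^{-bp^*(a,b)}$ and $|y|^{-cp^*(a,c)}$. I would handle this exactly as in the proof that $\varphi_\lambda$ itself is of class $C^1$: Maz'ya's embedding $\mathcal{D}_a^{1,p}(\mathbb{R}^N\backslash\{|y|=0\})\hookrightarrow L_b^{p^*(a,b)}(\mathbb{R}^N)$ controls the critical term, while H\"older's inequality and hypothesis~\eqref{hipoteseg1} control the subcritical one.
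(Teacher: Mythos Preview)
Your proposal is correct and follows essentially the same approach as the paper: define the auxiliary function $F(t,w)=\langle\varphi_\lambda'(t(u-w)),t(u-w)\rangle$, observe $F(1,0)=0$, use Lemma~\ref{bouchekiflema2.5} to ensure $\partial_t F(1,0)=\langle\psi_\lambda'(u),u\rangle\neq 0$, and apply the implicit function theorem. You are in fact slightly more explicit than the paper on a few points (the $C^1$-regularity of $F$, the positivity of $\xi$ after shrinking $\epsilon$, and the continuity argument for the $\mathcal{N}_\lambda^-$ case), but the substance is identical.
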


\begin{proof}
Given $u \in \mathcal{N}_\lambda$, we define the function 
$F_u\colon \mathbb{R}\times \mathcal{D}\sb{a}\sp{1,p}(\mathbb{R}\sp{N} \backslash \{|y|=0\}) \to \mathbb{R}$ by
\begin{align*}
F_u(\xi,w) 
&\equiv \langle \varphi_\lambda'(\xi(u-w),\xi(u-w)\rangle\nonumber\\
&= \xi^p\|u-w\|^p -\xi^{p^*(a,b)}\int\sb{\mathbb{R}^N}\frac{h|u-w|^{p^*(a,b)}}{|y|^{bp^*(a,b)}}\, dz
-\lambda \xi^q\int\sb{\mathbb{R}^N}\frac{g|u-w|^{q}}{|y|^{cp^*(a,c)}}\, dz.
\end{align*}
Then $F_u(1,0) = \langle\varphi_\lambda'(u),u\rangle = 0$ and by 
equations~\eqref{nehari1} and~\eqref{bouchekif2.4B}, as well as the fact that $\mathcal{N}_\lambda^0=\emptyset$, it follows that
\begin{align*}
\frac{d}{d\xi}F_u(1,0)
= (p-q)\|u\|^p-(p^*(a,b)-q)\int\sb{\mathbb{R}^N}\frac{h|u|^{p^*(a,b)}}{|y|^{bp^*(a,b)}}\, dz \neq 0.
\end{align*}

By the implicit function theorem there exist 
$\epsilon \in \mathbb{R}^+$ and 
a differentiable function
$\xi\colon B_\epsilon(0) 
\subset \mathcal{D}\sb{a}\sp{1,p}(\mathbb{R}\sp{N} \backslash \{|y|=0\}) 
\to \mathbb{R}$ such that $\xi(0)=1$ and
$F_u(\xi(\bar{v}),\bar{v})=0$. This is equivalent to
$$
\langle \varphi_\lambda'(\xi(\bar{v})(u-\bar{v})),\xi(\bar{v})(u-\bar{v})\rangle =0,
$$
that is, $\xi(\bar{v})(u-\bar{v}) \in \mathcal{N}_\lambda$ 
for every 
$\bar{v} \in B_\epsilon(0) \subset \mathcal{D}\sb{a}\sp{1,p}(\mathbb{R}\sp{N} 
\backslash \{|y|=0\})$.
The proof of the last claim of the lemma follows from this case with some minors modifications. 

To conclude, the derivative of $\xi$ at the origin applied to the function 
\( v \in \mathcal{D}\sb{a}\sp{1,p}(\mathbb{R}\sp{N} 
\backslash \{|y|=0\}) \) 
is given by
\begin{align*}
\langle\xi'(0),v\rangle
&=-\displaystyle\frac{\partial F_u(\xi(0),v)}{\partial w}\left(\displaystyle\frac{\partial F_u(\xi(0),v)}{\partial\xi}\right)^{-1}
\end{align*}
which implies~\eqref{hsu3.1}.
The lemma is proved.
\end{proof}

\section{Proof of Theorem~\ref{teoremamultiplicidade1}}
\label{multiplicidadesecao2}

In this section we prove Theorem~\ref{teoremamultiplicidade1} by showing 
that there exists a Palais-Smale sequence at an appropriate negative level 
and also by showing that we can recover the compactness condition to such 
sequences.

\begin{prop}
\begin{enumerate}[label={\upshape(\arabic*)}, align=left, widest=ii, leftmargin=*]
\item\label{hsuprop3.3(i)}
For every $\lambda\in(0,\Lambda_1)$ there exists a Palais-Smale sequence
$(u_n)_{n \in \mathbb{N}}\subset \mathcal{N}_\lambda$
for the functional $\varphi_\lambda$ at the level $d$; 
that is,
\( \varphi_\lambda(u_n) \to d \)
and
\( \varphi_\lambda'(u_n) \to 0 \)
in the dual space
\( \left(\mathcal{D}\sb{a}\sp{1,p}(\mathbb{R}\sp{N} 
\backslash \{|y|=0\})\right)^* \)
as \( n \to \infty \).

\item\label{hsuprop3.3(ii)}
For every $\lambda\in(0,(q/p)\Lambda_1)$ there exists a Palais-Smale sequence 
$(u_n)_{n \in \mathbb{N}}\subset \mathcal{N}_\lambda^-$ 
for the functional $\varphi_\lambda$
at the level $d^-$; 
that is,
\( \varphi_\lambda(u_n) \to d^- \)
and
\( \varphi_\lambda'(u_n) \to 0 \)
in the dual space
\( \left(\mathcal{D}\sb{a}\sp{1,p}(\mathbb{R}\sp{N} 
\backslash \{|y|=0\})\right)^* \)
as \( n \to \infty \).

\end{enumerate}
\label{hsuprop3.3}
\end{prop}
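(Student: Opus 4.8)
The plan is to construct the two Palais-Smale sequences by minimizing $\varphi_\lambda$ over the relevant portion of the Nehari manifold via Ekeland's variational principle, and then to upgrade the resulting minimizing sequence to a genuine Palais-Smale sequence by differentiating the rescaling function $\xi$ produced in Lemma~\ref{hsulema3.1}. For assertion~\ref{hsuprop3.3(i)} I would work on $\overline{\mathcal{N}_\lambda}$, which is closed in $\mathcal{D}\sb{a}\sp{1,p}(\mathbb{R}\sp{N}\backslash\{|y|=0\})$ and hence a complete metric space; since $u\mapsto\langle\varphi_\lambda'(u),u\rangle$ is strongly continuous, one checks that $\overline{\mathcal{N}_\lambda}\setminus\{0\}=\mathcal{N}_\lambda$. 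On $\overline{\mathcal{N}_\lambda}$ the functional $\varphi_\lambda$ is continuous and, by Lemma~\ref{bouchekiflema2.3}, bounded from below, with $\inf_{\overline{\mathcal{N}_\lambda}}\varphi_\lambda=d<0$ by Lemma~\ref{bouchekiflema2.6}\ref{bouchekiflema2.6(i)} (the value at the possibly adjoined point $0$ being $0>d$). Ekeland's variational principle then furnishes $(u_n)_{n\in\mathbb{N}}\subset\overline{\mathcal{N}_\lambda}$ with $\varphi_\lambda(u_n)\to d$ and $\varphi_\lambda(w)\geqslant\varphi_\lambda(u_n)-\tfrac1n\|w-u_n\|$ for every $w\in\overline{\mathcal{N}_\lambda}$; coercivity of $\varphi_\lambda$ on $\mathcal{N}_\lambda$ (Lemma~\ref{bouchekiflema2.3}) makes $(u_n)$ bounded, and since $\varphi_\lambda(u)\to0$ as $\|u\|\to0$ while $d<0$ we have $\liminf_n\|u_n\|>0$, hence $u_n\in\mathcal{N}_\lambda$ for all large $n$.

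Next I would convert this almost-minimizer into an almost-critical point. Fix $v\in\mathcal{D}\sb{a}\sp{1,p}(\mathbb{R}\sp{N}\backslash\{|y|=0\})$ with $\|v\|=1$. For each large $n$, Lemma~\ref{hsulema3.1} applied at $u_n\in\mathcal{N}_\lambda$ produces $\epsilon_n>0$ and a differentiable $\xi_n\colon B_{\epsilon_n}(0)\to\mathbb{R}^+$ with $\xi_n(0)=1$ and $w_\rho\equiv\xi_n(\rho v)(u_n-\rho v)\in\mathcal{N}_\lambda$ for $0<\rho<\epsilon_n$. Inserting $w=w_\rho$ into the Ekeland inequality and Taylor-expanding the $C^1$ functional $\varphi_\lambda$ about $u_n$,
\[
\langle\varphi_\lambda'(u_n),w_\rho-u_n\rangle+o(\|w_\rho-u_n\|)\geqslant-\tfrac1n\|w_\rho-u_n\|;
\]
since $w_\rho-u_n=(\xi_n(\rho v)-1)u_n-\rho\,\xi_n(\rho v)v\to0$ with $\|w_\rho-u_n\|=O(\rho)$ as $\rho\to0^+$, dividing by $\rho$ and letting $\rho\to0^+$ gives $\langle\varphi_\lambda'(u_n),\langle\xi_n'(0),v\rangle u_n-v\rangle\geqslant-\tfrac1n\|\langle\xi_n'(0),v\rangle u_n-v\|$. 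As $u_n\in\mathcal{N}_\lambda$ forces $\langle\varphi_\lambda'(u_n),u_n\rangle=0$, replacing $v$ by $-v$ we obtain
\[
|\langle\varphi_\lambda'(u_n),v\rangle|\leqslant\frac1n\bigl(\|u_n\|\,|\langle\xi_n'(0),v\rangle|+1\bigr),\qquad\|v\|=1,
\]
so that, after taking the supremum over $\|v\|=1$, the convergence $\varphi_\lambda'(u_n)\to0$ in $\bigl(\mathcal{D}\sb{a}\sp{1,p}(\mathbb{R}\sp{N}\backslash\{|y|=0\})\bigr)^*$ will follow once $\|\xi_n'(0)\|$ is bounded uniformly in $n$.

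The heart of the argument is thus this uniform bound. In formula~\eqref{hsu3.1} the numerator is $O(\|v\|)$ uniformly in $n$ because $(u_n)$ is bounded, the four integrals there being estimated by H\"{o}lder's and Maz'ya's inequalities and hypotheses~\eqref{hipoteseg1},~\eqref{hipoteseh1}; by~\eqref{bouchekif2.4B} the denominator equals $-\langle\psi_\lambda'(u_n),u_n\rangle$, so it suffices to prove $\liminf_n|\langle\psi_\lambda'(u_n),u_n\rangle|>0$. If this failed, then $\langle\psi_\lambda'(u_n),u_n\rangle\to0$ along a subsequence, and combining~\eqref{nehari1} with~\eqref{bouchekif2.4B} and~\eqref{bouchekif2.4C} we would get
\begin{align*}
\int\sb{\mathbb{R}^N}\frac{h|u_n|^{p^*(a,b)}}{|y|^{bp^*(a,b)}}\,dz & =\frac{p-q}{p^*(a,b)-q}\,\|u_n\|^p+o(1), \\
\lambda\int\sb{\mathbb{R}^N}\frac{g|u_n|^{q}}{|y|^{cp^*(a,c)}}\,dz & =\frac{p^*(a,b)-p}{p^*(a,b)-q}\,\|u_n\|^p+o(1);
\end{align*}
estimating the left-hand sides from above by Maz'ya's inequality and by H\"{o}lder's plus Maz'ya's inequality, respectively, and using that $\|u_n\|$ is bounded away from $0$, one obtains a lower bound for $\|u_n\|^{p^*(a,b)-p}$ and an upper bound for $\|u_n\|^{p-q}$ whose compatibility forces $\lambda\geqslant\Lambda_1$, with $\Lambda_1$ exactly the constant of Lemma~\ref{bouchekiflema2.5}, contradicting $\lambda\in(0,\Lambda_1)$. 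Hence $\|\xi_n'(0)\|\leqslant C$ for all large $n$, and together with the last display and $\varphi_\lambda(u_n)\to d$ this proves~\ref{hsuprop3.3(i)}. This uniform non-degeneracy, a quantitative version of the fact that $\mathcal{N}_\lambda^0=\emptyset$, is the step I expect to demand the most care, since without it the denominator in~\eqref{hsu3.1} could collapse and the bound on $\xi_n'(0)$ would be lost.

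Assertion~\ref{hsuprop3.3(ii)} is obtained by the same scheme carried out on $\mathcal{N}_\lambda^-$. First, $\mathcal{N}_\lambda^-$ is closed: by~\eqref{bouchekif2.4B} and Maz'ya's inequality every $u\in\mathcal{N}_\lambda^-$ satisfies $\|u\|\geqslant c_0$ for some constant $c_0>0$, so a strong limit of a sequence in $\mathcal{N}_\lambda^-$ is a nonzero point of $\mathcal{N}_\lambda$ with $\langle\psi_\lambda'(u),u\rangle\leqslant0$, which must be strict because $\mathcal{N}_\lambda^0=\emptyset$ (Lemma~\ref{bouchekiflema2.5}, applicable since $\lambda<(q/p)\Lambda_1<\Lambda_1$), whence $u\in\mathcal{N}_\lambda^-$. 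As $\mathcal{N}_\lambda^-\neq\emptyset$ by Lemma~\ref{bouchekiflema2.7} and $d^-=\inf_{\mathcal{N}_\lambda^-}\varphi_\lambda>C_1>0$ is finite by Lemma~\ref{bouchekiflema2.6}\ref{bouchekiflema2.6(ii)}, Ekeland's principle supplies a bounded minimizing sequence $(u_n)\subset\mathcal{N}_\lambda^-$; the last assertion of Lemma~\ref{hsulema3.1} guarantees that the perturbations $\xi_n^-(\rho v)(u_n-\rho v)$ stay in $\mathcal{N}_\lambda^-$, so the Ekeland inequality may be applied to them, and the rescaling computation of the second paragraph together with the uniform bound of the third (using again $\lambda<(q/p)\Lambda_1<\Lambda_1$) carry over verbatim, producing the required Palais-Smale sequence in $\mathcal{N}_\lambda^-$ at the level $d^-$.
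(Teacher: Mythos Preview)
Your proposal is correct and follows essentially the same approach as the paper: Ekeland's variational principle on the Nehari manifold, then Lemma~\ref{hsulema3.1} to perturb within $\mathcal{N}_\lambda$ (resp.\ $\mathcal{N}_\lambda^-$), a first-order expansion in $\rho$ together with $\langle\varphi_\lambda'(u_n),u_n\rangle=0$ to bound $\|\varphi_\lambda'(u_n)\|$ by $C(1+\|\xi_n'(0)\|)/n$, and finally the contradiction argument (an asymptotic version of Lemma~\ref{bouchekiflema2.5}) showing that if the denominator in~\eqref{hsu3.1} were to vanish along a subsequence one would obtain $\lambda\geqslant\Lambda_1$. Your treatment is in places slightly tidier---you make the completeness of the ambient set explicit via $\overline{\mathcal{N}_\lambda}$, and your passage to the limit in $\rho$ is more direct than the paper's manipulation with the auxiliary factor $(\xi_n(w_\rho)-1)/\xi_n(w_\rho)$---but the logical structure and the key ingredients coincide.
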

\begin{proof}
(1) By Lemma~\ref{bouchekiflema2.3} we know that the functional
$\varphi_\lambda$ is coercive and bounded from below in the Nehari manifold
$\mathcal{N}_\lambda$. 
By Ekeland's variational principle there exists a minimizing sequence
$(u_n)_{n\in\mathbb{N}} \subset \mathcal{N}_\lambda$ 
such that
\begin{align*}
\varphi_\lambda(u_n) < d+\frac{1}{n}
\quad \text{and} \quad
\varphi_\lambda(u_n) < \varphi_\lambda(w) +\frac{1}{n}\|w-u_n\| 
\textrm{ for every } w\in\mathcal{N}_\lambda.
\end{align*}
Since $d<0$, considering 
$n\in \mathbb{N}$ big enough we get
\begin{align*}
\varphi_\lambda(u_n) 
&<d+\frac{1}{n} <\frac{d}{p} <0.
\end{align*}

Combining this inequality with equation~\eqref{bouchekif2.3A},
as well as with the definition of the functional $\varphi_\lambda$ and 
H\"{o}lder's inequality, we obtain
\begin{align*}
&\left(\frac{1}{p}-\frac{1}{p^*(a,b)}\right) \|u_n\|^p
-\lambda\left(\frac{1}{q}-\frac{1}{p^*(a,b)}\right)\|g\|_{L\sb{\alpha}\sp{r}(\mathbb{R}\sp{N})}K(N,p,\mu,a,c)^\frac{q}{p}\|u_n\|^q 
< \frac{d}{p} < 0.
\end{align*}
This implies that 
\begin{align}\label{hsu3.12b}
\|u_n\|< \left(\left(\frac{1}{q}-\frac{1}{p^*(a,b)}\right)\lambda\|g\|_{L\sb{\alpha}\sp{r}(\mathbb{R}\sp{N})}K(N,p,\mu,a,c)^\frac{q}{p}\left(\frac{1}{p}-\frac{1}{p^*(a,b)}\right)^{-1}\right)^\frac{1}{p-q} 
\end{align}
and 
\begin{align*}
0 < -\frac{d}{p}\left(\frac{1}{q}-\frac{1}{p^*(a,b)}\right)^{-1} 
< \lambda\int\sb{\mathbb{R}^N}\frac{g|u_n|^{q}}{|y|^{cp^*(a,c)}}\, dz
\leq \lambda\|g\|_{L\sb{\alpha}\sp{r}(\mathbb{R}\sp{N})}K(N,p,\mu,a,c)^\frac{q}{p}\|u_n\|^q. 
\end{align*}
Consequently, $u_n \neq 0$ and
\begin{align}\label{hsu3.12a}
\|u_n\| > \left(-\frac{d}{p} 
\left(\frac{1}{q}-\frac{1}{p^*(a,b)}\right)^{-1}
\frac{1}{\lambda\|g\|_{L\sb{\alpha}\sp{r}(\mathbb{R}\sp{N})}K(N,p,\mu,a,c)^\frac{q}{p}}
\right)^\frac{1}{q}.
\end{align}

Now we show that
$
\varphi_\lambda'(u_n) \to 0
$
in$\left(\mathcal{D}\sb{a}\sp{1,p}(\mathbb{R}\sp{N} \backslash \{|y|=0\})\right)^*$ as $n\to \infty$. 
To do this, we apply Lemma~\ref{hsulema3.1} to the elements of the minimizing sequence 
$(u_n)_{n \in \mathbb{N}} \subset \mathcal{N}_\lambda$. Hence, for every
$u_n\in\mathcal{N}_\lambda$ there exist $\epsilon_n > 0$ 
and a differentiable function
$\xi_n\colon B_{\epsilon_n}(0) \to \mathbb{R}$ 
such that
$\xi_n(w)(u_n-w) \in \mathcal{N}_\lambda$.

Now we consider $0<\rho<\epsilon_n$. 
For $u\in \mathcal{D}\sb{a}\sp{1,p}(\mathbb{R}\sp{N} \backslash \{|y|=0\})$ 
such that $u\neq0$, we define
$w_\rho \equiv \rho u/\|u\|$ 
and also 
$\eta_\rho \equiv \xi_n(w_\rho)(u_n-w_\rho)$. 
By the definition of 
$\xi_n(w_\rho)$ it follows that
$\eta_\rho \in \mathcal{N}_\lambda$, which implies that
\begin{align}\label{hsu3.13.2}
\langle\varphi_\lambda'(\eta_\rho),\eta_\rho\rangle = \langle\varphi_\lambda'(\eta_\rho),\xi_n(w_\rho)(u_n-w_\rho)\rangle=0.
\end{align} 
We also deduce from the properties of the minimizing sequence that
$\varphi_\lambda(\eta_\rho)-\varphi_\lambda(u_n) \geqslant -(1/n)\|\eta_\rho - u_n\|. 
$
By the mean value theorem, we conclude that
\begin{align*}
\langle \varphi_\lambda'(u_n), \eta_\rho - u_n\rangle + o(\|\eta_\rho - u_n\|)
\geqslant -\frac{1}{n}\|\eta_\rho - u_n\|.
\end{align*}
From this, we get
\begin{align}\label{hsu3.16}
\langle\varphi_\lambda'(u_n),-w_\rho\rangle + (\xi_n(w_\rho)-1)\langle\varphi_\lambda'(u_n), (u_n-w_\rho) \rangle 
\geqslant -\frac{1}{n}\|\eta_\rho - u_n\| + o(\|\eta_\rho - u_n\|).
\end{align}
Multiplying both sides of equation~\eqref{hsu3.13.2} by the factor
$(\xi_n(w_\rho)-1)/\xi_n(w_\rho)$
and adding termwise with inequality~\eqref{hsu3.16}, we obtain
\begin{align*}
&\langle \varphi_\lambda'(u_n),- \rho \dfrac{u}{\| u \|} \rangle
+ (\xi_n(w_\rho)-1)\langle\varphi_\lambda'(u_n),u_n-w_\rho\rangle
+ \frac{(\xi_n(w_\rho)-1)}{\xi_n(w_\rho)}\langle\varphi_\lambda'(\eta_\rho),\xi_n(w_\rho)(u_n-w_\rho)\rangle\nonumber\\
&\qquad \qquad =-\rho\langle \varphi_\lambda'(u_n), \frac{u}{\|u\|}\rangle
+ (\xi_n(w_\rho)-1)\langle\varphi_\lambda'(u_n)-\varphi_\lambda'(\eta_\rho),u_n-w_\rho\rangle\nonumber\\
& \qquad \qquad \geqslant -\frac{1}{n}\|\eta_\rho - u_n\| + o(\|\eta_\rho - u_n\|).
\end{align*}
Therefore, by the definition of \( w\sb{\rho} \) we get
\begin{align}\label{hsu3.18}
&\langle \varphi_\lambda'(u_n), \frac{u}{\|u\|}\rangle \leqslant \frac{(\xi_n(w_\rho)-1)}{\rho}\langle\varphi_\lambda'(u_n)-\varphi_\lambda'(\eta_\rho),u_n-w_\rho\rangle
+\frac{1}{\rho n}\|\eta_\rho - u_n\| + \frac{o(\|\eta_\rho - u_n\|)}{\rho}.
\end{align}

Following up, we have
\begin{align*}
\|\eta_\rho - u_n\|
&\leqslant\rho\xi_n(w_\rho) + (\xi_n(w_\rho)-1)\|u_n\|,
\end{align*}
as well as
\begin{align*}
\lim_{\rho \to 0}\frac{|\xi_n(w_\rho)-1|}{\rho}
& = \lim_{\rho \to 0}\frac{|\xi_n(w_\rho)-\xi_n(0)|}{\rho} \leqslant\|\xi_n'(0)\|.
\end{align*}

Hereafter, let $C$ stands for a positive constant that can change from one 
passage to another. 
Keeping $n\in \mathbb{N}$ fixed, 
using inequalities~\eqref{hsu3.12b} and~\eqref{hsu3.12a}, 
passing to the limit as  
$\rho \to 0$ in inequality~\eqref{hsu3.18}, and 
noticing that $\lim_{\rho \to 0} \eta_\rho = u_n$, it follows that
\begin{align}\label{hsu3.20}
\langle\varphi_\lambda'(u_n),\frac{u}{\|u\|}\rangle
& \leqslant 
\lim\sb{\rho \to 0}
\left\{
\splitfrac{\dfrac{1}{\rho}|\xi_n(w_\rho)-1|\langle \varphi_\lambda'(u_n)-\varphi_\lambda'(\eta_\rho),u_n-w_\rho\rangle }
{ +\dfrac{\xi_n(w_\rho)}{n}
  +\dfrac{(\xi_n(w_\rho)-1)}{\rho}
   \dfrac{\|u_n\|}{n}
  +\dfrac{o(\rho\xi_n(w_\rho)+|\xi_n(w_\rho)-1|\|u_n\|)}{\rho} }
\right\} \nonumber \\
&< \frac{C}{n} \left( 1+ \|\xi_n'(0)\| \right).
\end{align}

Now we show that the sequence 
$\left(\|\xi_n'(u)\|\right)_{n \in \mathbb{N}} \subset \mathbb{R}$
is uniformly bounded. By equation~\eqref{hsu3.1}, 
and using inequalities~\eqref{hsu3.12b} 
and~\eqref{hsu3.12a},  
together with H\"{o}lder's and Maz'ya's inequalities, 
we deduce that
\begin{align}\label{hsu3.21}
\langle\xi_n'(0),v\rangle
\leqslant \frac{C \|v\|}{\left|(p-q)\|u_n\|^p-(p^*(a,b)-q)\displaystyle\int\sb{\mathbb{R}^N}\frac{h|u_n|^{p^*(a,b)}}{|y|^{bp^*(a,b)}}\, dz\right|}
\end{align}
for some positive constant $C > 0$ which does not depend on $n \in \mathbb{N}$.

\begin{afirmativa}
For $n \in \mathbb{N}$ big enough the denominator of
inequality~\eqref{hsu3.21} is bounded.
\end{afirmativa}
\begin{proof}[Proof of the claim]
To prove this claim we argue by contradiction and we suppose that there exists a subsequence $(u_n)_{n\in\mathbb{N}} \in \mathcal{N}_\lambda$ such that
\begin{align*}
(p-q)\|u_n\|^p-(p^*(a,b)-q)\int\sb{\mathbb{R}^N}\frac{h|u_n|^{p^*(a,b)}}{|y|^{bp^*(a,b)}}\, dz = o(1).
\end{align*}
This relation, together with the fact that $(u_n)_{n\in\mathbb{N}}\in \mathcal{N}_\lambda$, imply that
\begin{align*}
\|u_n\|^p = \lambda\frac{p^*(a,b)-q}{p^*(a,b)-p}\int\sb{\mathbb{R}^N}\frac{g|u_n|^{q}}{|y|^{cp^*(a,c)}}\, dz + o(1).
\end{align*}

Additionaly, together with 
H\"{o}lder's and Maz'ya's inequalities and the previous relations, we deduce that
\begin{align*}
\|u_n\| \geqslant \left(\frac{p-q}{p^*(a,b)-p}\frac{1}{\|h\|_{L^\infty}}K(N,p,\mu,a,b)^{-\frac{p^*(a,b)}{p}}\right)^\frac{1}{p^*(a,b)-p}+o(1).
\end{align*}
and
\begin{align*}
\|u_n\|
\leqslant \left(\lambda\frac{p^*(a,b)-q}{p^*(a,b)-p}\|g\|_{L\sb{\alpha}\sp{r}(\mathbb{R}\sp{N})}
K(N,p,\mu,a,c)^\frac{q}{p}\right)^\frac{1}{p-q}+o(1).
\end{align*}

Combining these last two inequalities, we get
\begin{align*}
\lambda
& \geqslant (p^*(a,b)-p)(p^*(a,b)-q)^{-\frac{p^*(a,b)-q}{p^*(a,b)-p}}(p-q)^\frac{p-q}{p^*(a,b)-p}\|h\|_{L^\infty}^{-\frac{p-q}{p^*(a,b)-p}}\\
& \qquad \times K(N,p,\mu,a,b)^{-\frac{p^*(a,b)(p-q)}{p(p^*(a,b)-p)}}\|g\|_{L\sb{\alpha}\sp{r}(\mathbb{R}\sp{N})}^{-1}K(N,p,\mu,a,c)^{-\frac{q}{p}}
= \Lambda_1,
\end{align*}
which is a contradiction. Hence, our claim follows. 
\end{proof}

In this way, by inequality~\eqref{hsu3.21} we have
$\langle\xi_n'(0),v\rangle \leqslant C\|v\|$, that is,
$\|\xi_n'(0)\| \leqslant C$; and from inequality~\eqref{hsu3.20} we deduce that 
$
\langle\varphi_\lambda'(u_n),u/\|u\|\rangle
 \to 0
$
as $n \to \infty$. 
Finally, $\|\varphi_\lambda'(u_n)\| \to 0$ in 
$\left(\mathcal{D}\sb{a}\sp{1,p}(\mathbb{R}\sp{N} 
\backslash \{|y|=0\})\right)^*$ as $n\to \infty$, 
which concludes the proof of item~\ref{hsuprop3.3(i)}.

(2) The proof of this item is similar to that of item~\ref{hsuprop3.3(i)}; one just have to use the last claim of Lemma~\ref{hsulema3.1}.
\end{proof}

Now we connect these auxiliary results to obtain a minimum for the functional $\varphi_\lambda$ in $\mathcal{N}_\lambda^+$.

\begin{prop}\label{bouchekifprop3.2}
If $\lambda \in (0,\Lambda_1)$, then the functional $\varphi_\lambda$ 
has a positive minimizer $u_1 \in \mathcal{N}_\lambda^+$ such that
$\varphi_\lambda(u_1)=d=d^+<0$
and the function $u_1$ is solution to problem~\eqref{problemamultiplicidade} in $\mathcal{D}\sb{a}\sp{1,p}(\mathbb{R}\sp{N} \backslash \{|y|=0\})$.
\end{prop}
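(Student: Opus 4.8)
The plan is to combine the Palais--Smale sequence produced in Proposition~\ref{hsuprop3.3}\ref{hsuprop3.3(i)} with a concentration-compactness / weak-convergence argument, exploiting that the energy level $d$ is negative. First I would take the minimizing Palais--Smale sequence $(u_n)_{n\in\mathbb{N}}\subset\mathcal{N}_\lambda$ at level $d$, which by Lemma~\ref{bouchekiflema2.3} and the coercivity estimate~\eqref{bouchekif2.3A} is bounded in $\mathcal{D}\sb{a}\sp{1,p}(\mathbb{R}\sp{N}\backslash\{|y|=0\})$. Passing to a subsequence, there is $u_1$ with $u_n\rightharpoonup u_1$ weakly in $\mathcal{D}\sb{a}\sp{1,p}$, $u_n\to u_1$ strongly in the weighted Lebesgue space $L\sb{c}\sp{q}$ attached to the subcritical term (this is the key compactness input: the weight $|y|^{-cp^*(a,c)}$ together with $g\in L\sb{\alpha}\sp{r}$ and $q<p<p^*(a,b)$ gives a compact embedding on the relevant scale, exactly as in Ghergu--R{\u{a}}dulescu and Cao--Li--Zhou), and $u_n\to u_1$ a.e. Since $\langle\varphi_\lambda'(u_n),v\rangle\to 0$ for all $v$, passing to the limit shows $u_1$ is a weak solution, i.e. $\langle\varphi_\lambda'(u_1),v\rangle=0$ for every test function; in particular $\langle\varphi_\lambda'(u_1),u_1\rangle=0$.

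Next I would show $u_1\not\equiv 0$. From estimate~\eqref{hsu3.12a} we have $\|u_n\|$ bounded away from zero and, more usefully, $\lambda\int g|u_n|^q/|y|^{cp^*(a,c)}\,dz$ is bounded below by a positive constant; by the strong convergence in the subcritical term this passes to the limit, forcing $\int g|u_1|^q/|y|^{cp^*(a,c)}\,dz>0$, hence $u_1\neq 0$. Then I would prove strong convergence $u_n\to u_1$ in $\mathcal{D}\sb{a}\sp{1,p}$. Write $w_n\equiv u_n-u_1$; by the Brezis--Lieb lemma applied to both the norm and the critical term, and using $\langle\varphi_\lambda'(u_n),u_n\rangle=0$, $\langle\varphi_\lambda'(u_1),u_1\rangle=0$, one gets $\|w_n\|^p=\int h|w_n|^{p^*(a,b)}/|y|^{bp^*(a,b)}\,dz+o(1)$. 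If $\|w_n\|^p\to\ell>0$, Maz'ya's inequality~\eqref{eq:maz} together with $h\le\|h\|_{L^\infty}$ yields a positive lower bound $\ell\ge\big((p^*(a,b)-? )\cdots\big)$ of the usual Sobolev type; but then, using~\eqref{phineharig} and Brezis--Lieb again, $d=\varphi_\lambda(u_1)+\big(\tfrac1p-\tfrac1{p^*(a,b)}\big)\ell+o(1)$, and since $\varphi_\lambda(u_1)\ge d$ (as $u_1\in\mathcal{N}_\lambda$) this gives $d\ge d+(\tfrac1p-\tfrac1{p^*(a,b)})\ell>d$, a contradiction. Hence $\ell=0$, so $u_n\to u_1$ strongly, $\varphi_\lambda(u_1)=d$, and $u_1\in\mathcal{N}_\lambda$.

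It remains to place $u_1$ in $\mathcal{N}_\lambda^+$ and to make it positive. Since $\lambda\in(0,\Lambda_1)$, Lemma~\ref{bouchekiflema2.5} gives $\mathcal{N}_\lambda^0=\emptyset$, so $u_1\in\mathcal{N}_\lambda^+\cup\mathcal{N}_\lambda^-$; if $u_1\in\mathcal{N}_\lambda^-$ then by Lemma~\ref{bouchekiflema2.6}\ref{bouchekiflema2.6(ii)} we would have $\varphi_\lambda(u_1)=d\ge C_1>0$, contradicting $d<0$ from Lemma~\ref{bouchekiflema2.6}\ref{bouchekiflema2.6(i)} — hence $u_1\in\mathcal{N}_\lambda^+$ and $\varphi_\lambda(u_1)=d=d^+$. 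For positivity, note $\varphi_\lambda$ and the constraint depend on $u$ only through $|\nabla u|$, $|u|$; replacing $u_1$ by $|u_1|$ leaves it in $\mathcal{N}_\lambda^+$ with the same energy, so $u_1\ge 0$ can be assumed, and then the strong maximum principle for the (degenerate) operator — applied on compact subsets of $\{|y|\neq 0\}$ after the standard regularity of Guedda--V\'eron type — gives $u_1>0$. Finally, $u_1\in\mathcal{N}_\lambda^+$ is a local minimizer of $\varphi_\lambda$ on $\mathcal{N}_\lambda$ (Lemma~\ref{bouchekiflema2.7} shows the sets $\mathcal{N}_\lambda^+$ and $\mathcal{N}_\lambda^-$ are separated along rays, so a global minimizer over $\mathcal{N}_\lambda^+$ is a local minimizer over $\mathcal{N}_\lambda$), hence by Lemma~\ref{bouchekiflema2.4} it is a critical point of $\varphi_\lambda$, i.e. a weak solution of~\eqref{problemamultiplicidade}. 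I expect the main obstacle to be the compactness step: justifying that the subcritical term converges strongly along the weak-convergent subsequence (the weighted compact embedding with a sign-changing $g\in L\sb{\alpha}\sp{r}$), and then controlling the critical-term bubble via Brezis--Lieb so that the sign of $d$ rules out energy loss at infinity or at $\{|y|=0\}$; everything else is bookkeeping with the identities~\eqref{phineharig}--\eqref{bouchekif2.4C} already established.
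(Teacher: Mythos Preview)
Your overall strategy is sound and largely parallels the paper's, but there is one genuine gap: your argument that $u_1\in\mathcal{N}_\lambda^+$ invokes Lemma~\ref{bouchekiflema2.6}\ref{bouchekiflema2.6(ii)} to get $\varphi_\lambda(u_1)\geqslant C_1>0$ if $u_1\in\mathcal{N}_\lambda^-$. That lemma, however, is stated only for $\lambda\in\big(0,(q/p)\Lambda_1\big)$, strictly smaller than the range $(0,\Lambda_1)$ claimed in the proposition. So your proof, as written, does not cover $\lambda\in[(q/p)\Lambda_1,\Lambda_1)$. The paper avoids this by arguing differently: assuming $u_1\in\mathcal{N}_\lambda^-$, one first observes from~\eqref{phineharig} and $\varphi_\lambda(u_1)=d<0$ that $\int_{\mathbb{R}^N} g|u_1|^q/|y|^{cp^*(a,c)}\,dz>0$, then applies Lemma~\ref{bouchekiflema2.7}\ref{bouchekiflema2.7(ii)} to produce $t_1^+<1$ with $t_1^+u_1\in\mathcal{N}_\lambda^+$ and $\varphi_\lambda(t_1^+u_1)<\varphi_\lambda(u_1)=d$, contradicting the minimality of $d$. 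This fibering argument works for the full range $\lambda\in(0,\Lambda_1)$.

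Two smaller remarks. First, your Brezis--Lieb route to strong convergence is correct, but the paper's argument is shorter: once $u_1\in\mathcal{N}_\lambda$ and the subcritical integral converges, the identity~\eqref{phineharig} together with weak lower semicontinuity of the norm gives $d\leqslant\varphi_\lambda(u_1)<\liminf\varphi_\lambda(u_n)=d$ if $\|u_1\|<\liminf\|u_n\|$, a contradiction. Second, your phrasing ``compact embedding into $L_c^q$'' is slightly off---the embedding itself is not compact on $\mathbb{R}^N$---but the conclusion you need, namely $\int g|u_n|^q/|y|^{cp^*(a,c)}\,dz\to\int g|u_1|^q/|y|^{cp^*(a,c)}\,dz$, does hold by Vitali's theorem, using $g\in L_\alpha^r$ to provide uniform integrability and tightness.
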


\begin{proof}
By Lemma~\ref{bouchekiflema2.3} the functional
$\varphi_\lambda$ is coercive and bounded from below in the 
Nehari manifold $\mathcal{N}_\lambda$. We can suppose, up to a subsequence still denoted in the same way, that there exists
$u_1 \in \mathcal{D}\sb{a}\sp{1,p}(\mathbb{R}\sp{N} \backslash \{|y|=0\})$ 
such that, as \( n \to \infty \), 
\begin{enumerate}[label={\upshape(\arabic*)}, align=left, widest=4, leftmargin=*]
\item
$u_n \rightharpoonup u_1$ weakly in 
$\mathcal{D}\sb{a}\sp{1,p}(\mathbb{R}\sp{N} \backslash \{|y|=0\})$;
\item
$u_n \rightharpoonup u_1$ weakly in $L_b^{p^*(a,b)}(\mathbb{R}^N\backslash\{|y|=0\})$;
\item
$u_n \rightharpoonup u_1$ weakly in $L_c^{q}(\mathbb{R}^N\backslash\{|y|=0\})$, 
for every $q$ such that $1<q<p$ and $a\leqslant c <a+1$;
\item
$u_n \to u_1$ a.\@ e.\@ in $\mathbb{R}^N$.
\end{enumerate}
Hence, by Proposition~\ref{hsuprop3.3}~\ref{hsuprop3.3(i)} and by the previous convergences it follows that $u_1$ is a weak solution to problem~\eqref{problemamultiplicidade}. Indeed, we have
\begin{align*}
&\lim_{n\to \infty}
\left\{
\splitfrac{\displaystyle\int\sb{\mathbb{R}^N}\frac{|\nabla u_n|^{p-2}\nabla u_n \nabla v}{|y|^{ap}}\, dz
-\mu \displaystyle\int\sb{\mathbb{R}^N}
\frac{|u_n|^{p-2}u_nv}{|y|^{p(a+1)}}\, dz }
{ -\displaystyle\int\sb{\mathbb{R}^N}
 \frac{h|u_n|^{p^*(a,b)-2}u_nv}{|y|^{bp^*(a,b)}}\, dz
-\lambda \displaystyle\int\sb{\mathbb{R}^N}
\frac{g|u_n|^{q-2}u_nv}{|y|^{cp^*(a,c)}}\, dz }
\right\} \\
& \qquad = \int\sb{\mathbb{R}^N}\frac{|\nabla u_1|^{p-2}\nabla u_1 \nabla v}{|y|^{ap}}\, dz
-\mu\int\sb{\mathbb{R}^N}\frac{|u_1|^{p-2}u_1v}{|y|^{p(a+1)}}\, dz\\
&\qquad \qquad -\int\sb{\mathbb{R}^N}\frac{h|u_1|^{p^*(a,b)-2}u_1v}{|y|^{bp^*(a,b)}}\, dz
-\lambda \int\sb{\mathbb{R}^N}\frac{g|u_1|^{q-2}u_1v}{|y|^{cp^*(a,c)}}\, dz
\end{align*}
for every
$v \in \mathcal{D}\sb{a}\sp{1,p}(\mathbb{R}\sp{N} \backslash \{|y|=0\})$. 
Moreover, by Lemma~\ref{bouchekiflema2.6}~\ref{bouchekiflema2.6(i)} 
we have $\varphi_\lambda(u_1)=d<0=\varphi_\lambda(0)$, 
that is, $u_1\neq 0$.

Now we show that  $u_n \to u_1$ strongly in
$\mathcal{D}\sb{a}\sp{1,p}(\mathbb{R}\sp{N} \backslash \{|y|=0\})$. We argue by contradiction and we suppose that the inequality
$\|u_1\| < \liminf_{n\to\infty} \|u_n\|$ is valid. 
Hence, by equation~\eqref{phineharig} we have
\begin{align*}
d \leqslant \varphi_\lambda(u_1) 
& < \liminf_{n\to\infty} \left(\frac{1}{p}-\frac{1}{p^*(a,b)}\right)\|u_n\|^p 
-\lambda\left(\frac{1}{q}-\frac{1}{p^*(a,b)}\right)\int\sb{\mathbb{R}^N}\frac{g|u_n|^{q}}{|y|^{cp^*(a,c)}}\, dz\\
&=\liminf_{n\to\infty} \varphi_\lambda (u_n) = d,
\end{align*} 
which is a contradiction. Therefore, 
$u_n \to u_1$ strongly in  
$\mathcal{D}\sb{a}\sp{1,p}(\mathbb{R}\sp{N} \backslash \{|y|=0\})$.

It remains to show that 
$u_1 \in \mathcal{N}_\lambda^+$ and that this function is positive. Again we argue by contradiction and we suppose that 
$u_1 \notin \mathcal{N}_\lambda^+$. 
Knowing that $\mathcal{N}^0 = \emptyset$, from Lemma~\ref{bouchekiflema2.5}, we deduce that 
$u_1 \in \mathcal{N}_\lambda^-$. 
And since 
$d<0$, by equation~\eqref{phineharig} we have
\begin{align*}
d = \varphi_\lambda(u_1) 
& =\left(\frac{1}{p}-\frac{1}{p^*(a,b)}\right)\|u_1\|^p -\lambda\left(\frac{1}{q}-\frac{1}{p^*(a,b)}\right)\int\sb{\mathbb{R}^N}\frac{g|u_1|^{q}}{|y|^{cp^*(a,c)}}\, dz < 0;
\end{align*}
consequently, 
$\displaystyle\int\sb{\mathbb{R}^N}\frac{g|u_1|^{q}}{|y|^{cp^*(a,c)}}\, dz >0$. 
Applying  Lemma~\ref{bouchekiflema2.7}~\ref{bouchekiflema2.7(ii)}, 
we deduce that there exist numbers 
$0<t_1^+ < t_{\max} < t_1^-$ such that 
$t_1^+u_1 \in \mathcal{N}_\lambda^+$
and 
$t_1^-u_1 \in \mathcal{N}_\lambda^-$; 
moreover,
\begin{align*}
\varphi_\lambda(t_1^+u_1) = \inf_{0<t\leqslant t_{\max}} \varphi_\lambda(tu_1) \quad \textrm{and} \quad
\varphi_\lambda(t_1^-u_1) = \sup_{t \geqslant 0} \varphi_\lambda(tu_1).
\end{align*}
In particular, $t_1^-u_1=u_1$, that is, $t_1^- = 1$, 
because $u_1 \in \mathcal{N}_\lambda^-$ and the numbers
$t_1^-$ and $t_1^+$ are unique. As we have already seen in the proof of Lemma~\ref{bouchekiflema2.7}~\ref{bouchekiflema2.7(ii)}, we obtain the relations 
\begin{align*}
\frac{d}{dt}\varphi_\lambda(t_1^+u_1)=0 
\quad \textrm{and} \quad
\frac{d^2}{dt^2}\varphi_\lambda(t_1^+u_1)>0.
\end{align*}
Hence, there exists $t^- \in \mathbb{R}$ 
such that $t_1^+<t^-\leqslant t_1^- = 1$ 
and with
$\varphi_\lambda(t_1^+u_1) < \varphi_\lambda(t_1^-u_1)$. 
Again by Lemma~\ref{bouchekiflema2.7}~\ref{bouchekiflema2.7(ii)} 
we deduce that
\begin{align*}
\inf_{0 \leqslant t < t_{\max}}\varphi_\lambda(tu)=
\varphi_\lambda(t_1^+u_1)< \varphi_\lambda(t^-u_1)<\varphi_\lambda(t_1^-u_1)
= \sup_{t\geqslant 0 }\varphi_\lambda(tu)
=\varphi_\lambda(u_1)=d,
\end{align*}
which is a contradiction; 
therefore, $u\in\mathcal{N}_\lambda^+$. 
To guarantee the positivity of the solution, it is sufficient to note that 
$\varphi_\lambda(u_1) = \varphi_\lambda(|u_1|)$ 
and that $|u_1| \in \mathcal{N}_\lambda^+$. 
This concludes the proof of the proposition.
\end{proof}

\begin{proof}[Proof of Theorem~\ref{teoremamultiplicidade1}]
The result follows immediately from Proposition~\ref{bouchekifprop3.2}.
\end{proof}

\section{Proof of Theorem~\ref{teoremamultiplicidade2}}
\label{multiplicidadesecao3}

In this section we establish the existence of a second solution to problem~\eqref{problemamultiplicidade}. To do this, we state some more auxiliary results.

\begin{lema}\label{bouchekiflema4.1}
Suppose that hypotheses~\eqref{hipoteseg1} and~\eqref{hipoteseg2} are valid. 
Let $(u_n)_{n \in \mathbb{N}} \in 
\mathcal{D}\sb{a}\sp{1,p}(\mathbb{R}\sp{N} \backslash \{|y|=0\})$ 
be a Palais-Smale sequence for the functional
$\varphi_\lambda$ at some level $d \in \mathbb{R}$ and suppose that 
$u_n \rightharpoonup u$ weakly in 
$\mathcal{D}\sb{a}\sp{1,p}(\mathbb{R}\sp{N} \backslash \{|y|=0\})$. 
Then $\varphi_\lambda'(u)=0$ and
$\varphi_\lambda(u) \geqslant - C_0\lambda^\frac{p}{p-q}$.
\end{lema}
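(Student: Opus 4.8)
The plan is to show two things: first, that the weak limit $u$ is a critical point of $\varphi_\lambda$, and second, that its energy respects the lower bound already established for elements of $\mathcal{N}_\lambda$. For the first part, I would fix an arbitrary test function $v \in \mathcal{D}\sb{a}\sp{1,p}(\mathbb{R}\sp{N} \backslash \{|y|=0\})$ and pass to the limit in $\langle \varphi_\lambda'(u_n), v \rangle = o(1)$. The linear term $\int |y|^{-p(a+1)} |u_n|^{p-2} u_n v\,dz$ and the subcritical term $\int |y|^{-cp^*(a,c)} g |u_n|^{q-2} u_n v\,dz$ converge to the corresponding expressions in $u$ by the weak convergences (2)--(4) listed in Proposition~\ref{bouchekifprop3.2} together with hypothesis~\eqref{hipoteseg1} and H\"older's inequality; the critical term converges by the Brezis--Lieb-type argument combined with a.e.\ convergence and the boundedness of $h$. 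The genuinely delicate term is $\int |y|^{-ap} |\nabla u_n|^{p-2} \nabla u_n \cdot \nabla v\,dz$: here I would invoke the standard fact that a bounded Palais--Smale sequence for this class of $p$-Laplacian-type functionals has $\nabla u_n \to \nabla u$ almost everywhere (this follows from a concentration-compactness / truncation argument testing $\varphi_\lambda'(u_n)$ against $(u_n - u)$ cut off suitably, which is by now classical and can be cited), and then conclude by the Vitali convergence theorem using the weak $L^{p/(p-1)}$ convergence of $|\nabla u_n|^{p-2}\nabla u_n$ with weight $|y|^{-ap}$. This yields $\langle \varphi_\lambda'(u), v\rangle = 0$ for all $v$, i.e.\ $\varphi_\lambda'(u)=0$.

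For the energy bound, I would split into two cases according to whether $u \equiv 0$ or $u \not\equiv 0$. If $u \not\equiv 0$, then since $\varphi_\lambda'(u)=0$ we have $u \in \mathcal{N}_\lambda$, so inequality~\eqref{bouchekif2.3B} from Lemma~\ref{bouchekiflema2.3} applies directly and gives $\varphi_\lambda(u) \geqslant -\lambda^{p/(p-q)} C_0$, which is exactly the claimed bound. If $u \equiv 0$, then $\varphi_\lambda(u) = \varphi_\lambda(0) = 0 \geqslant -\lambda^{p/(p-q)} C_0$ trivially, since $C_0 > 0$ and $\lambda > 0$. Either way the conclusion holds.

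The main obstacle is the almost-everywhere convergence of the gradients: unlike the linear case, one cannot pass to the limit in the quasilinear principal term by weak convergence alone, and the cylindrical singular weight $|y|^{-ap}$ together with the two critical exponents prevents a direct appeal to compact embeddings. I expect to handle this by testing $\langle \varphi_\lambda'(u_n) - \varphi_\lambda'(u), \psi(u_n - u)\rangle \to 0$ for a suitable Lipschitz truncation $\psi$, using the boundedness of $(u_n)$ in $\mathcal{D}\sb{a}\sp{1,p}$ (which follows from the Palais--Smale property together with the coercivity estimate~\eqref{bouchekif2.3A}), the compactness of the lower-order terms on bounded sets away from $\{|y|=0\}$, and the monotonicity inequality for the vector field $\xi \mapsto |\xi|^{p-2}\xi$; this forces $\int |y|^{-ap}(|\nabla u_n|^{p-2}\nabla u_n - |\nabla u|^{p-2}\nabla u)\cdot \nabla(u_n-u)\,dz \to 0$ on compact sets, from which a.e.\ convergence of $\nabla u_n$ follows along a subsequence. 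Once that is in hand, the remaining passages to the limit are routine applications of H\"older's inequality, the weighted Maz'ya embedding~\eqref{eq:maz}, and dominated or Vitali convergence.
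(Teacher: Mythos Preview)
Your proposal is correct and follows essentially the same two-step approach as the paper: pass to the limit in $\langle\varphi_\lambda'(u_n),v\rangle$ to obtain $\varphi_\lambda'(u)=0$, and then invoke inequality~\eqref{bouchekif2.3B} for the lower bound. In fact you are more careful than the paper, which simply asserts the limit passage without addressing the a.e.\ convergence of gradients for the quasilinear term, and which cites~\eqref{bouchekif2.3B} without isolating the trivial case $u\equiv 0$; your treatment of both points is standard and fills these gaps cleanly.
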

\begin{proof}
Since $(u_n)_{n\in\mathbb{N}} \subset \mathcal{D}\sb{a}\sp{1,p}(\mathbb{R}\sp{N} \backslash \{|y|=0\})$ is a Palais-Smale sequence and 
$u_n \rightharpoonup u$ as \( n \to \infty\), it follows that
\begin{align*}
0  
= \lim_{n\to\infty} \langle\varphi_\lambda'(u_n),v\rangle
&= \lim_{n\to\infty} \Bigg(\int\sb{\mathbb{R}^N}\frac{|\nabla u_n|^{p-2}\nabla u_n \nabla v}{|y|^{ap}}\, dz
-\mu\int\sb{\mathbb{R}^N}\frac{|u_n|^{p-2}u_nv}{|y|^{p(a+1)}}\, dz\\
& \qquad \qquad -\int\sb{\mathbb{R}^N}\frac{h|u_n|^{p^*(a,b)-2}u_nv}{|y|^{bp^*(a,b)}}\, dz
-\lambda \int\sb{\mathbb{R}^N}\frac{g|u_n|^{q-2}u_nv}{|y|^{cp^*(a,c)}}\, dz\Bigg).
\end{align*}
Consequently, 
 $\langle\varphi_\lambda'(u),v\rangle=0$ 
for every function $v \in \mathcal{D}\sb{a}\sp{1,p}(\mathbb{R}\sp{N} \backslash \{|y|=0\})$; hence, $\varphi_\lambda'(u)=0$. The last claim of the lemma follows directly from inequality~\eqref{bouchekif2.3B}. The lemma is proved. 
\end{proof}

The proof of the next result is partially inspired by
Ghergu and R{\u{a}}dulescu~\cite{MR2179582}. Before we state it, we define the number
\begin{align*}
d_\lambda^* \equiv \left(\frac{1}{p}-\frac{1}{p^*(a,b)}\right)h_0^{-\frac{p}{p^*(a,b)-p}}K(N,p,\mu,a,b)^{-\frac{p^*(a,b)}{p^*(a,b)-p}}-C_0\lambda^\frac{p}{p-q}.
\end{align*}
\begin{lema}\label{bouchekiflema4.2}
Suppose that the
hypotheses~\eqref{hipoteseg1},~\eqref{hipoteseg2},~\eqref{hipoteseh1}, 
and~\eqref{hipoteseh2} are valid. 
Let $(u_n)_{n\in\mathbb{N}} \subset
\mathcal{D}\sb{a}\sp{1,p}(\mathbb{R}\sp{N} \backslash \{|y|=0\})$ 
be a Palais-Smale sequence for the functional $\varphi_\lambda$ at the level $d < d_\lambda^*$. 
Then there exists a subsequence, still denoted in the same way, 
and there exists
$u \in \mathcal{D}\sb{a}\sp{1,p}(\mathbb{R}\sp{N} \backslash \{|y|=0\})$ 
such that
$u_n \to u$ strongly in 
$\mathcal{D}\sb{a}\sp{1,p}(\mathbb{R}\sp{N} \backslash \{|y|=0\})$ 
as $n \to \infty$.
\end{lema}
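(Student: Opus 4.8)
The plan is to combine boundedness of the sequence with a Brezis--Lieb splitting and to show that any nontrivial concentration profile carries at least a fixed amount of energy, which together with Lemma~\ref{bouchekiflema4.1} forces the level to be at least $d_\lambda^*$, contrary to $d<d_\lambda^*$. First I would establish that $(u_n)_{n\in\mathbb{N}}$ is bounded in $\mathcal{D}\sb{a}\sp{1,p}(\mathbb{R}^N\backslash\{|y|=0\})$: forming $\varphi_\lambda(u_n)-\tfrac1{p^*(a,b)}\langle\varphi_\lambda'(u_n),u_n\rangle$ cancels the critical term and leaves $\bigl(\tfrac1p-\tfrac1{p^*(a,b)}\bigr)\|u_n\|^p$ minus a multiple of $\int\sb{\mathbb{R}^N}g|u_n|^q|y|^{-cp^*(a,c)}\,dz$, which by H\"older's inequality and Maz'ya's embedding is $O(\|u_n\|^q)$; since $\varphi_\lambda(u_n)\to d$, $\langle\varphi_\lambda'(u_n),u_n\rangle=o(\|u_n\|)$, and $1\leqslant q<p$, boundedness follows. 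Passing to a subsequence I get $u_n\rightharpoonup u$ in $\mathcal{D}\sb{a}\sp{1,p}$, $u_n\to u$ a.e.\ in $\mathbb{R}^N$, and $u_n\to u$ in $L^s_{c,\mathrm{loc}}$ for subcritical $s$. By Lemma~\ref{bouchekiflema4.1}, $\varphi_\lambda'(u)=0$ and $\varphi_\lambda(u)\geqslant-C_0\lambda^{p/(p-q)}$, and hypothesis~\eqref{hipoteseg1} (with $g\in L\sb{\alpha}\sp{r}$, $r=p^*(a,c)/[p^*(a,c)-q]$) yields, after splitting $\mathbb{R}^N$ into a large ball, a neighborhood of $\{|y|=0\}$, and the complement where $\|g\|_{L\sb{\alpha}\sp{r}}$ is small, that both $\int\sb{\mathbb{R}^N}g|u_n|^q|y|^{-cp^*(a,c)}\,dz$ and its linearization pass to the limit.

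Next, setting $v_n\equiv u_n-u$, the Brezis--Lieb lemma applied to $\|\cdot\|^p$ and to the weighted critical nonlinearity gives $\|u_n\|^p=\|v_n\|^p+\|u\|^p+o(1)$ and $\int\sb{\mathbb{R}^N}h|u_n|^{p^*(a,b)}|y|^{-bp^*(a,b)}\,dz=\int\sb{\mathbb{R}^N}h|v_n|^{p^*(a,b)}|y|^{-bp^*(a,b)}\,dz+\int\sb{\mathbb{R}^N}h|u|^{p^*(a,b)}|y|^{-bp^*(a,b)}\,dz+o(1)$. Subtracting $\langle\varphi_\lambda'(u),u\rangle=0$ from $\langle\varphi_\lambda'(u_n),u_n\rangle=o(1)$ and using these splittings together with the convergence of the subcritical term leaves $\|v_n\|^p-\int\sb{\mathbb{R}^N}h|v_n|^{p^*(a,b)}|y|^{-bp^*(a,b)}\,dz=o(1)$; writing $\ell\equiv\lim_n\|v_n\|^p\geqslant0$ along a further subsequence, we also get $\int\sb{\mathbb{R}^N}h|v_n|^{p^*(a,b)}|y|^{-bp^*(a,b)}\,dz\to\ell$, and inserting the splittings into $\varphi_\lambda(u_n)\to d$ produces the identity $d=\varphi_\lambda(u)+\bigl(\tfrac1p-\tfrac1{p^*(a,b)}\bigr)\ell$.

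The hard part, which I would model on the estimates of Ghergu and R{\u{a}}dulescu~\cite{MR2179582}, is to prove $\lim_n\int\sb{\mathbb{R}^N}h|v_n|^{p^*(a,b)}|y|^{-bp^*(a,b)}\,dz=h_0\lim_n\int\sb{\mathbb{R}^N}|v_n|^{p^*(a,b)}|y|^{-bp^*(a,b)}\,dz$. Since $v_n\rightharpoonup0$, hypothesis~\eqref{hipoteseh2} ($h\to h_0$ as $|y|\to0$ and as $|y|\to\infty$, and $h\geqslant h_0$) confines the loss of mass of the measures $|v_n|^{p^*(a,b)}|y|^{-bp^*(a,b)}\,dz$ to the singular set $\{|y|=0\}$ and to infinity, where $h$ is within $\epsilon$ of $h_0$: given $\epsilon>0$ one picks $0<\delta<R$ with $|h(y)-h_0|<\epsilon$ for $|y|<\delta$ and $|y|>R$, splits $\int(h-h_0)|v_n|^{p^*(a,b)}|y|^{-bp^*(a,b)}\,dz$ along $\{|y|<\delta\}$, $\{|y|>R\}$, $\{\delta\leqslant|y|\leqslant R\}$, and disposes of the last piece by the vanishing of $v_n$ there. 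I expect this to be the main obstacle: because the weight depends only on $y$, the region $\{\delta\leqslant|y|\leqslant R\}$ is an unbounded cylindrical slab, so weak convergence alone does not kill it — closing the estimate needs an extra truncation in $x$ together with a concentration--compactness analysis of the weighted critical term (a Lions' second concentration lemma in the weighted setting), whose atoms can only sit on $\{|y|=0\}$ or at infinity.

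Finally, assuming for contradiction that $\ell>0$, Maz'ya's inequality~\eqref{eq:maz} applied to $v_n$ gives $\bigl(\int\sb{\mathbb{R}^N}|v_n|^{p^*(a,b)}|y|^{-bp^*(a,b)}\,dz\bigr)^{p/p^*(a,b)}\leqslant K(N,p,\mu,a,b)\|v_n\|^p$; combining with the previous step, which forces $\int\sb{\mathbb{R}^N}|v_n|^{p^*(a,b)}|y|^{-bp^*(a,b)}\,dz\to\ell/h_0$, and letting $n\to\infty$ yields $\ell\geqslant h_0^{-p/(p^*(a,b)-p)}K(N,p,\mu,a,b)^{-p^*(a,b)/(p^*(a,b)-p)}$. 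Substituting this together with $\varphi_\lambda(u)\geqslant-C_0\lambda^{p/(p-q)}$ into $d=\varphi_\lambda(u)+\bigl(\tfrac1p-\tfrac1{p^*(a,b)}\bigr)\ell$ gives $d\geqslant d_\lambda^*$, contradicting $d<d_\lambda^*$. Therefore $\ell=0$, that is $\|u_n-u\|\to0$, which is the asserted strong convergence in $\mathcal{D}\sb{a}\sp{1,p}(\mathbb{R}^N\backslash\{|y|=0\})$.
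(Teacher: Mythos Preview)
Your proposal follows essentially the same route as the paper's proof: boundedness of $(u_n)$, Br\'ezis--Lieb splitting of both $\|\cdot\|^p$ and the $h$-weighted critical integral, the identity $\|v_n\|^p-\int_{\mathbb{R}^N} h|v_n|^{p^*(a,b)}|y|^{-bp^*(a,b)}\,dz=o(1)$, the replacement of $h$ by $h_0$ via hypothesis~\eqref{hipoteseh2}, and the contradiction through Maz'ya's inequality combined with the lower bound $\varphi_\lambda(u)\geqslant-C_0\lambda^{p/(p-q)}$ from Lemma~\ref{bouchekiflema4.1}. The one place where you are more cautious than the paper is the unbounded cylindrical slab $\{\delta\leqslant|y|\leqslant R\}$: the paper simply asserts $v_n\to 0$ strongly in $L^{p^*(a,b)}_{b,\mathrm{loc}}$ and applies this on $\Theta_\epsilon=\mathbb{R}^{N-k}\times(\overline{B_{R_\epsilon}(0)}\setminus B_{r_\epsilon}(0))$, whereas you correctly flag that this region is unbounded in $x$ and propose a concentration--compactness argument to close the gap---your caution here is well placed.
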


\begin{proof}
Using a standard argument we can prove the boundedness of the sequence 
$(u_n)_{n\in\mathbb{N}} \subset
\mathcal{D}\sb{a}\sp{1,p}(\mathbb{R}\sp{N} \backslash \{|y|=0\})$.
By the reflexivity of this Sobolev space, 
we can suppose, up to a subsequence
still denoted in the same way,
that there exists a function
$u\in \mathcal{D}\sb{a}\sp{1,p}(\mathbb{R}\sp{N} \backslash \{|y|=0\})$
such that, as \( n \to \infty \),
\begin{enumerate}[label={\upshape(\arabic*)}, align=left, widest=4, leftmargin=*]
\item
$u_n \rightharpoonup u$ weakly in $\mathcal{D}\sb{a}\sp{1,p}(\mathbb{R}\sp{N} \backslash \{|y|=0\})$;
\item
$u_n \rightharpoonup u$ weakly in $L_b^{p^*(a,b)}(\mathbb{R}^N\backslash\{|y|=0\})$;
\item
$u_n \rightharpoonup u$ weakly in $L_c^{q}(\mathbb{R}^N\backslash\{|y|=0\})$, 
for $ 1 < q < p $ and $ a \leqslant c < a + 1 $;
\item
$u_n \to u_1$ strongly in $L_s^{p^*(a,s)}(\mathbb{R}^N\backslash\{|y|=0\})$ for $a < s \leq a+1$;
\item
$u_n \to u$ a.\@~e.\@~in $\mathbb{R}^N$.
\end{enumerate}
Hence, the function $u$ is a weak solution to problem~\eqref{problemamultiplicidade} and the proof follows the same 
steps already made in the proof of 
Proposition~\ref{bouchekifprop3.2}.

Now we must show that the sequence 
$(u_n)_{n\in\mathbb{N}}$ converges strongly to the function $u$ in the Sobolev space $\mathcal{D}\sb{a}\sp{1,p}(\mathbb{R}\sp{N} \backslash \{|y|=0\})$.
To do this, let 
$(v_n)_{n\in\mathbb{N}} 
\subset \mathcal{D}\sb{a}\sp{1,p}(\mathbb{R}\sp{N} \backslash \{|y|=0\})$
be a sequence defined by $v_n=u_n - u$. 
By Br\'{e}zis-Lieb's lemma, we have 
\begin{align*}
\|v_n\|^p = \|u_n\|^p - \|u\|^p + o(1).
\end{align*}

\begin{afirmativa}\label{afirmativaparabouchekif4.2}
It is valid the relation
\begin{align*}
\int\sb{\mathbb{R}^N}\frac{h|v_n|^{p^*(a,b)}}{|y|^{bp^*(a,b)}}\, dz
= \int\sb{\mathbb{R}^N}\frac{h|u_n|^{p^*(a,b)}}{|y|^{bp^*(a,b)}}\, dz
-\int\sb{\mathbb{R}^N}\frac{h|u|^{p^*(a,b)}}{|y|^{bp^*(a,b)}}\, dz + o(1).
\end{align*}
\end{afirmativa}
\begin{proof}[Proof of the claim]
Let $\epsilon >0$ be given. 
By hypotheses~\eqref{hipoteseh1} and~\eqref{hipoteseh2} 
we can choose numbers $R_\epsilon > r_\epsilon > 0$ such that
\begin{align}\label{gherguradulescu11}
\int\sb{|y|<r_\epsilon}\frac{h|u|^{p^*(a,b)}}{|y|^{bp^*(a,b)}}\, dz < \epsilon
\quad \textrm{and} \quad
\int\sb{|y|>R_\epsilon}\frac{h|u|^{p^*(a,b)}}{|y|^{bp^*(a,b)}}\, dz < \epsilon.
\end{align}

Denoting 
$\Omega_\epsilon = 
\overline{B_{R_\epsilon}(0)}\backslash B_{r_\epsilon}(0)$,
it follows that
\begin{align*}
&\left|\int\sb{\mathbb{R}^N}\frac{h(|u_n|^{p^*(a,b)}-|u|^{p^*(a,b)}-|v_n|^{p^*(a,b)})}{|y|^{bp^*(a,b)}}\, dz \right|\\
& \qquad \leqslant \left|\int\sb{\Omega_\epsilon}\frac{h(|u_n|^{p^*(a,b)}-|u|^{p^*(a,b)})}{|y|^{bp^*(a,b)}}\, dz \right|
+\int\sb{\Omega_\epsilon}\frac{h|v_n|^{p^*(a,b)}}{|y|^{bp^*(a,b)}}\, dz\\
&\qquad \qquad + \int\sb{|y|<r_\epsilon}\frac{h|u|^{p^*(a,b)}}{|y|^{bp^*(a,b)}}\, dz 
+ \left|\int\sb{|y|<r_\epsilon}\frac{h(|u_n|^{p^*(a,b)}-|v_n|^{p^*(a,b)})}{|y|^{bp^*(a,b)}}\, dz\right|\\
&\qquad \qquad + \int\sb{|y|>R_\epsilon}\frac{h|u|^{p^*(a,b)}}{|y|^{bp^*(a,b)}}\, dz
+ \left|\int\sb{|y|>R_\epsilon}\frac{h(|u_n|^{p^*(a,b)}-|v_n|^{p^*(a,b)})}{|y|^{bp^*(a,b)}}\, dz\right|.
\end{align*}

Applying Lagrange's mean value theorem, 
there exists $\theta=\theta(z) \in [0,1]$ such that
\begin{align*}
\int\sb{|y|<r_\epsilon}\frac{h(|u_n|^{p^*(a,b)}-|v_n|^{p^*(a,b)})}{|y|^{bp^*(a,b)}}\, dz
= p^*(a,b)\int\sb{|y|<r_\epsilon}\frac{h|\theta u +v_n|^{p^*(a,b)-1}|u|}{|y|^{bp^*(a,b)}}\, dz.
\end{align*}
Using the first 
inequality in~\eqref{gherguradulescu11} together with inequality
$(x+y)^s \leqslant C(x^s + y^s)$ for any $x,y \in \mathbb{R}_*^+$, as well as
 H\"{o}lder's inequality, we deduce that 
\begin{align*}
&\int\sb{|y|<r_\epsilon}\frac{h|\theta u +v_n|^{p^*(a,b)-1}|u|}{|y|^{bp^*(a,b)}}\, dz\\
 &\qquad \leqslant C\int\sb{|y|<r_\epsilon}\frac{h(|u|^{p^*(a,b)} +|v_n|^{p^*(a,b)-1}|u|)}{|y|^{bp^*(a,b)}}\, dz\nonumber\\
& \qquad = C\int\sb{|y|<r_\epsilon}\frac{h|u|^{p^*(a,b)}}{|y|^{bp^*(a,b)}}\, dz
+ C\int\sb{|y|<r_\epsilon}\frac{h|v_n|^{p^*(a,b)-1}|u|}{|y|^{bp^*(a,b)}}\, dz\nonumber\\
& \qquad \leqslant C\epsilon 
+ C\left(\int\sb{|y|<r_\epsilon}\frac{h|v_n|^{p^*(a,b)}}{|y|^{bp^*(a,b)}}\, dz \right)^\frac{p^*(a,b)-1}{p^*(a,b)}\left(\int\sb{|y|<r_\epsilon}\frac{h|u|^{p^*(a,b)}}{|y|^{bp^*(a,b)}}\, dz \right)^\frac{1}{p^*(a,b)}\nonumber\\
& \qquad \leqslant C(\epsilon + \epsilon^\frac{1}{p^*(a,b)}).
\end{align*}
Hence, 
\begin{align*}
\int\sb{|y|<r_\epsilon}\frac{h|u|^{p^*(a,b)}}{|y|^{bp^*(a,b)}}\, dz
+ \left|\int\sb{|y|<r_\epsilon}\frac{h(|u_n|^{p^*(a,b)}-|v_n|^{p^*(a,b)})}{|y|^{bp^*(a,b)}}\, dz \right|\leqslant p^*(a,b)C(\epsilon + \epsilon^\frac{1}{p^*(a,b)})
\end{align*}
for some constant $C \in \mathbb{R}^+$ independent of
$n\in\mathbb{N}$ and 
of $\epsilon \in \mathbb{R}^+$.
In a similar way, we have
\begin{align*}
\int\sb{|y|>R_\epsilon}\frac{h|u|^{p^*(a,b)}}{|y|^{bp^*(a,b)}}\, dz
+ \left|\int\sb{|y|>R_\epsilon}\frac{h(|u_n|^{p^*(a,b)}-|v_n|^{p^*(a,b)})}{|y|^{bp^*(a,b)}}\, dz \right|
 \leqslant p^*(a,b)C(\epsilon + \epsilon^\frac{1}{p^*(a,b)}).
\end{align*}

Since $u_n \rightharpoonup u$ weakly in 
$\mathcal{D}\sb{a}\sp{1,p}(\mathbb{R}\sp{N} \backslash \{|y|=0\})$ 
as $n\to \infty$, 
the weak convergence in the Lebesgue space $L_b^{p^*(a,b)}(\mathbb{R}^N\backslash\{|y|=0\})$ imply 
\begin{align*}
\lim_{n\to\infty} \int\sb{\Omega_\epsilon}\frac{h(|u_n|^{p^*(a,b)}-|u|^{p^*(a,b)})}{|y|^{bp^*(a,b)}}\, dz=0
\quad \textrm{and} \quad
\lim_{n\to\infty} \int\sb{\Omega_\epsilon}\frac{h|v_n|^{p^*(a,b)}}{|y|^{bp^*(a,b)}}\, dz=0.
\end{align*}
Consequently, combining the two previous inequalities with the two previous limits, we obtain
\begin{align*}
\limsup_{n\to \infty}\left|\int\sb{\mathbb{R}^N}\frac{h(|u_n|^{p^*(a,b)}-|u|^{p^*(a,b)}-|v_n|^{p^*(a,b)})}{|y|^{bp^*(a,b)}}\, dz \right|
\leqslant (p^*(a,b)C+1)(\epsilon + \epsilon^\frac{1}{p^*(a,b)}),
\end{align*}
for some constant $C \in \mathbb{R}^+$ independent of
$n \in \mathbb{N}$ and of $\epsilon \in \mathbb{R}^+$. Finally, 
since $\epsilon > 0$ is arbitrary, it follows that
\begin{align*}
\lim_{n\to\infty} \int\sb{\mathbb{R}^N}\frac{h(|u_n|^{p^*(a,b)}-|v_n|^{p^*(a,b)})}{|y|^{bp^*(a,b)}}\, dz
=  \int\sb{\mathbb{R}^N}\frac{h|u|^{p^*(a,b)}}{|y|^{bp^*(a,b)}}\, dz.
\end{align*}
This concludes the proof of the claim.
\end{proof}

On the other hand, we also have the following result.

\begin{afirmativa}\label{afirmativaparabouchekif4.3}
It is valid the limit
\begin{align*}
\lim_{n\to\infty} \int\sb{\mathbb{R}^N}\frac{h(y)|v_n|^{p^*(a,b)}}{|y|^{bp^*(a,b)}}\, dz 
=\lim_{n\to\infty} h_0\int\sb{\mathbb{R}^N}\frac{|v_n|^{p^*(a,b)}}{|y|^{bp^*(a,b)}}\, dz.
\end{align*}
\end{afirmativa}
\begin{proof}[Proof of the claim]
Let $\epsilon>0$ be given.
By hypotheses~\eqref{hipoteseh1} and~\eqref{hipoteseh2} 
we can choose numbers $R_\epsilon > r_\epsilon > 0$ such that
\begin{align*}
|h(y)-h_0| = h(y) - h_0 < \epsilon \quad \textrm{a.\@~e.\@~in } 
\mathbb{R}^k\backslash 
\{\overline{B_{R_\epsilon}(0)}\backslash B_{r_\epsilon}(0)\}.
\end{align*}

Denoting $\Theta_\epsilon  = \mathbb{R}^{N-k}\times \{\overline{B_{R_\epsilon}(0)}\backslash B_{r_\epsilon}(0)\}$, we have
\begin{align*}
\int\sb{\mathbb{R}^N}\frac{(h(y)-h_0)|v_n|^{p^*(a,b)}}{|y|^{bp^*(a,b)}}\, dz
&\leqslant \epsilon\int\sb{\mathbb{R}^N\backslash\Theta_\epsilon}\frac{|v_n|^{p^*(a,b)}}{|y|^{bp^*(a,b)}}\, dz
+ (\|h\|_{L^\infty}-h_0)\int\sb{\Theta_\epsilon}\frac{|v_n|^{p^*(a,b)}}{|y|^{bp^*(a,b)}}\, dz\\
& \leqslant \epsilon\int\sb{\mathbb{R}^N}\frac{|v_n|^{p^*(a,b)}}{|y|^{bp^*(a,b)}}\, dz
+ (\|h\|_{L^\infty}-h_0)\int\sb{\Theta_\epsilon}\frac{|v_n|^{p^*(a,b)}}{|y|^{bp^*(a,b)}}\, dz.
\end{align*}

Since $v_n \rightharpoonup 0$ weakly in 
$\mathcal{D}\sb{a}\sp{1,p}(\mathbb{R}\sp{N} \backslash \{|y|=0\})$ 
as $n\to \infty$, Maz'ya's inequality implies that the sequence
$(v_n)_{n\in \mathbb{N}} 
\subset \mathcal{D}\sb{a}\sp{1,p}(\mathbb{R}\sp{N} \backslash \{|y|=0\})$
is bounded in $L_b^{p^*(a,b)}(\mathbb{R}^N\backslash\{|y|=0\})$. 
Moreover, by the weak convergence 
$u_n \rightharpoonup u$ in $L_b^{p^*(a,b)}(\mathbb{R}^N\backslash\{|y|=0\})$ 
already proved, it follows that
$v_n \to 0$ strongly in 
$L_{b,\textrm{loc}}^{p^*(a,b)}(\mathbb{R}^N\backslash\{|y|=0\})$ 
as $n\to \infty$. 
The previous relations imply that
\begin{align*}
\int\sb{\mathbb{R}^N}\frac{(h(y)-h_0)|v_n|^{p^*(a,b)}}{|y|^{bp^*(a,b)}}\, dz
\leqslant C\epsilon
\end{align*}
for some constant $C \in \mathbb{R}^+$ independent of $n \in \mathbb{N}$ 
and of $\epsilon \in \mathbb{R}^+$. 
Since $\epsilon >0$ is arbitrary, the claim follows.
\end{proof}

Following up, 
since $\varphi_\lambda(u_n) = d + o(1)$, 
by Br\'{e}zis-Lieb's lemma and by Claim~\ref{afirmativaparabouchekif4.2}, 
we get
\begin{align}\label{bouchekif4.4a}
&-\left(\frac{1}{q}-\frac{1}{p}\right)\|v_n\|^p
+ \left(\frac{1}{q}-\frac{1}{p^*(a,b)}\right)\int\sb{\mathbb{R}^N}\frac{h|v_n|^{p^*(a,b)}}{|y|^{bp^*(a,b)}}\, dz\nonumber\\
&\qquad \qquad = -\left(\frac{1}{q}-\frac{1}{p}\right)\|u_n\|^p
+ \left(\frac{1}{q}-\frac{1}{p^*(a,b)}\right)\int\sb{\mathbb{R}^N}\frac{h|u_n|^{p^*(a,b)}}{|y|^{bp^*(a,b)}}\, dz\nonumber\\
&\qquad \qquad \qquad +\left(\frac{1}{q}-\frac{1}{p}\right)\|u\|^p
- \left(\frac{1}{q}-\frac{1}{p^*(a,b)}\right)\int\sb{\mathbb{R}^N}\frac{h|u|^{p^*(a,b)}}{|y|^{bp^*(a,b)}}\, dz + o(1)\nonumber\\
&\qquad \qquad = \varphi_\lambda(u_n) - \varphi_\lambda(u) + o(1)\nonumber\\
&\qquad \qquad = d - \varphi_\lambda(u) + o(1).
\end{align}

Again by Br\'{e}zis-Lieb's lemma, by Claim~\ref{afirmativaparabouchekif4.2}, as well as the facts that 
$\varphi_\lambda'(u_n) = o(1)$ 
and $\displaystyle\lim_{n\to\infty}\int\sb{\mathbb{R}^N}\frac{g|u_n|^{q}}{|y|^{cp^*(a,c)}}\, dz=\displaystyle\int\sb{\mathbb{R}^N}\frac{g|u|^{q}}{|y|^{cp^*(a,c)}}\, dz$, we deduce that
\begin{align*}
\|v_n\|^p - \int\sb{\mathbb{R}^N}\frac{h|v_n|^{p^*(a,b)}}{|y|^{bp^*(a,b)}}\, dz
&= \langle \varphi_\lambda'(u_n),u_n\rangle - \langle \varphi_\lambda'(u),u\rangle + o(1) = o(1).
\end{align*}

By Claim~\ref{afirmativaparabouchekif4.2} and by the fact that $\displaystyle\int\sb{\mathbb{R}^N}
h |y|^{-bp^*(a,b)} |u_n|^{p^*(a,b)}\, dz$ 
and
$\displaystyle\int\sb{\mathbb{R}^N}
h |y|^{-bp^*(a,b)} |u|^{p^*(a,b)}\, dz$ 
are bounded, it follows that
$\displaystyle\int\sb{\mathbb{R}^N}
h |y|^{bp^*(a,b)} |v_n|^{p^*(a,b)} \, dz$ 
is also bounded. Hence, by the previous equation it follows that 
the sequence $(\|v_n\|)\sb{n \in \mathbb{N}} \subset \mathbb{R}$ is bounded. 
Applying Claim~\ref{afirmativaparabouchekif4.3} we can suppose that, as $n \to \infty$,
\begin{align*}
\|v_n\|^p \to l \quad \textrm{and} \quad \int\sb{\mathbb{R}^N}\frac{h|v_n|^{p^*(a,b)}}{|y|^{bp^*(a,b)}}\, dz
= h_0\int\sb{\mathbb{R}^N}\frac{|v_n|^{p^*(a,b)}}{|y|^{bp^*(a,b)}}\, dz \to l.
\end{align*}
If $l>0$, by Maz'ya's inequality it follows that
\begin{align*}
l\geqslant K(N,p,\mu,a,b)^{-\frac{p^*(a,b)}{p^*(a,b)-p}}h_0^{-\frac{p}{p^*(a,b)-p}};
\end{align*}
and by equation~\eqref{bouchekif4.4a}, by the definitions of $d_\lambda^*$ and $l$, 
by Lemma~\ref{bouchekiflema4.1}, and 
by the previous inequality, 
it follows that $d \geqslant d_\lambda^*$, 
which is a contradiction with the hypothesis $d<d_\lambda^*$.
Hence $l=0$, and this implies that $\|u_n -u\|^p \to 0$, that is, 
$u_n \to u$ in 
$\mathcal{D}\sb{a}\sp{1,p}(\mathbb{R}\sp{N} \backslash \{|y|=0\})$ 
as $n \to \infty$; besides, by equation~\eqref{bouchekif4.4a} we get
$d=\lim_{n\to\infty}\varphi_\lambda(u_n) = \varphi_\lambda(u)$.
\end{proof}

Contrary to the analysis made by 
Hsu~\cite{MR2559275} and also by
Hsu and Lin~\cite{MR2903809} for the case $p=2$, in the general situation 
$1 < p < N $ we do not have the explicit solutions for the optimal constant~\eqref{problemagazzinimusina}.
Therefore, we can not follow their steps to proceed the blow-up analysis.
In the next result, we follow closely the ideas by
Cao, Li and Zhou~\cite{MR1313196} 
and by Bouchekif and El Mokhtar~\cite{MR2801239}.

\begin{lema}\label{bouchekiflema4.3}
Suppose that the
hypotheses~\eqref{hipoteseg1},~\eqref{hipoteseg2},~\eqref{hipoteseh1}, 
and~\eqref{hipoteseh2} are valid. Then there exist $\Lambda_* > 0$
and $v \in \mathcal{D}\sb{a}\sp{1,p}(\mathbb{R}\sp{N} \backslash \{|y|=0\})$ 
such that for every $\lambda \in (0,\Lambda_*)$ it is valid the inequality
\begin{align*}
\sup_{t\geqslant 0} \varphi_\lambda(tv) < d_\lambda^*.
\end{align*}
In particular, $d^-<d_\lambda^*$ for every $\lambda\in(0,\Lambda_*)$.
\end{lema}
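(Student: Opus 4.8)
The plan is to produce a single function $v$ (independent of $\lambda$) together with a threshold $\Lambda_*$ such that $\sup_{t\ge0}\varphi_\lambda(tv)<d_\lambda^*$ for every $\lambda\in(0,\Lambda_*)$. The assertion $d^-<d_\lambda^*$ then follows at once: if $v$ is chosen so that $\int_{\mathbb R^N}g|v|^q|y|^{-cp^*(a,c)}\,dz>0$, then for $\lambda<\Lambda_1$ Lemma~\ref{bouchekiflema2.7}\ref{bouchekiflema2.7(ii)} yields a number $t^->0$ with $t^-v\in\mathcal N_\lambda^-$ and $\varphi_\lambda(t^-v)=\sup_{t\ge0}\varphi_\lambda(tv)$, so that $d^-\le\varphi_\lambda(t^-v)<d_\lambda^*$; accordingly I would take $\Lambda_*\le(q/p)\Lambda_1$, the range in which the quoted lemma applies and $\mathcal N_\lambda^-\neq\emptyset$.

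As the test function I would use a rescaled extremal for the optimal constant in~\eqref{problemagazzinimusina}. Let $U$ realise the infimum in~\eqref{problemagazzinimusina} for the parameters $(N,p,\mu,a,b)$ (which is known to exist in the parameter ranges under consideration, although, contrary to the case $p=2$, without an explicit formula — this is exactly why a blow-up analysis in the style of Hsu is not at our disposal), and set $U_\sigma:=U(\cdot/\sigma)$. Since the quotient in~\eqref{problemagazzinimusina} is invariant under this rescaling, each $U_\sigma$ is again an extremal, hence $\|U_\sigma\|^p=K(N,p,\mu,a,b)^{-1}\bigl(\int_{\mathbb R^N}|U_\sigma|^{p^*(a,b)}|y|^{-bp^*(a,b)}\,dz\bigr)^{p/p^*(a,b)}$, and as $\sigma\to0^+$ the functions $U_\sigma$ concentrate at the origin. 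By~\eqref{hipoteseg2}, $g\ge\nu_0>0$ on $B_{r_0}(0)$; by~\eqref{hipoteseg1} and H\"{o}lder's inequality the contribution of $B_{r_0}(0)^{c}$ to $\int_{\mathbb R^N}g|U_\sigma|^q|y|^{-cp^*(a,c)}\,dz$ is at most $\|g\|_{L^{r}_{\alpha}(B_{r_0}(0)^{c})}\bigl(\int_{B_{r_0}(0)^{c}}|U_\sigma|^{p^*(a,c)}|y|^{-cp^*(a,c)}\,dz\bigr)^{q/p^*(a,c)}$, which is dominated by the positive contribution over $B_{r_0}(0)$ once $\sigma$ is small; hence $\int_{\mathbb R^N}g|U_\sigma|^q|y|^{-cp^*(a,c)}\,dz>0$ for such $\sigma$, and I fix one and put $v:=U_\sigma$.

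The heart of the matter is the energy estimate. With $A:=\|v\|^p$, $B:=\int_{\mathbb R^N}h|v|^{p^*(a,b)}|y|^{-bp^*(a,b)}\,dz$ and $D:=\int_{\mathbb R^N}g|v|^q|y|^{-cp^*(a,c)}\,dz>0$ we have $\varphi_\lambda(tv)=\tfrac{A}{p}t^{p}-\tfrac{B}{p^*(a,b)}t^{p^*(a,b)}-\tfrac{\lambda D}{q}t^{q}$, and since $q<p<p^*(a,b)$ the supremum over $t\ge0$ is attained at the point $t^-$ of Lemma~\ref{bouchekiflema2.7}\ref{bouchekiflema2.7(ii)}. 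From $\frac{d}{dt}\varphi_\lambda(tv)\big|_{t^-}=0$ one obtains $A(t^-)^{p}=B(t^-)^{p^*(a,b)}+\lambda D(t^-)^{q}$, so $t^-\le t_0:=(A/B)^{1/(p^*(a,b)-p)}$ and $t^-\to t_0>0$ as $\lambda\to0^+$; substituting this relation back into $\varphi_\lambda(t^-v)$ and discarding the nonpositive perturbation term gives
\[
\sup_{t\ge0}\varphi_\lambda(tv)=\varphi_\lambda(t^-v)\le\Bigl(\tfrac1p-\tfrac1{p^*(a,b)}\Bigr)\frac{A^{p^*(a,b)/(p^*(a,b)-p)}}{B^{p/(p^*(a,b)-p)}}-\Bigl(\tfrac1q-\tfrac1p\Bigr)\lambda D(t^-)^{q}.
\]
Now hypothesis~\eqref{hipoteseh2} gives $h\ge h_0$, hence $B\ge h_0\int_{\mathbb R^N}|v|^{p^*(a,b)}|y|^{-bp^*(a,b)}\,dz$, and feeding this together with the extremality identity for $A$ into the right-hand side bounds the first term by
\[
\Bigl(\tfrac1p-\tfrac1{p^*(a,b)}\Bigr)h_0^{-\frac{p}{p^*(a,b)-p}}K(N,p,\mu,a,b)^{-\frac{p^*(a,b)}{p^*(a,b)-p}}=d_\lambda^*+C_0\lambda^{\frac{p}{p-q}}.
\]
Therefore $\sup_{t\ge0}\varphi_\lambda(tv)\le d_\lambda^*+C_0\lambda^{p/(p-q)}-(\tfrac1q-\tfrac1p)\lambda D(t^-)^{q}$; using $(t^-)^{q}\ge(t_0/2)^{q}$ for $\lambda$ small and the fact that $p/(p-q)>1$ — so $C_0\lambda^{p/(p-q)}$ is of strictly lower order in $\lambda$ than $\lambda$ — the right-hand side is $<d_\lambda^*$ once $\lambda$ is small enough, which fixes $\Lambda_*$ and, together with the first paragraph, proves the lemma and the inequality $d^-<d_\lambda^*$.

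The step I expect to be the main obstacle is the bound on the pure-critical supremum $\bigl(\tfrac1p-\tfrac1{p^*(a,b)}\bigr)A^{p^*(a,b)/(p^*(a,b)-p)}B^{-p/(p^*(a,b)-p)}$ by $d_\lambda^*+C_0\lambda^{p/(p-q)}$: if a genuine minimiser of~\eqref{problemagazzinimusina} is available this is the short computation just indicated, but because no explicit minimiser is known for general $1<p<N$ with the cylindrical weights, one may instead be forced to work with truncated near-minimisers $U_{\sigma,R}$ and to weigh the truncation error against the concentration scale $\sigma$ — letting $\sigma\to0$ as $\lambda\to0$ if necessary — in the spirit of Cao, Li and Zhou~\cite{MR1313196} and Bouchekif and El Mokhtar~\cite{MR2801239}. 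The two auxiliary facts such a refinement needs, namely the asymptotics $\int_{\mathbb R^N}h|U_\sigma|^{p^*(a,b)}|y|^{-bp^*(a,b)}\,dz=(h_0+o(1))\int_{\mathbb R^N}|U_\sigma|^{p^*(a,b)}|y|^{-bp^*(a,b)}\,dz$ coming from~\eqref{hipoteseh2}, and a quantitative lower bound for $\int_{\mathbb R^N}g|U_\sigma|^q|y|^{-cp^*(a,c)}\,dz$ coming from~\eqref{hipoteseg2}, both rest on the concentration of $U_\sigma$ at the origin.
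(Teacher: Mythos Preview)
Your argument is correct and follows the same overall strategy as the paper --- pick an extremal $v$ for~\eqref{problemagazzinimusina}, ensure the $g$-integral is positive via concentration and hypothesis~\eqref{hipoteseg2}, bound the ``pure critical'' part of the energy by the Sobolev level using $h\geqslant h_0$ and extremality, and finish by comparing the $O(\lambda^{p/(p-q)})$ error in $d_\lambda^*$ against the linear-in-$\lambda$ gain from the $g$-term.  Where you genuinely differ from the paper is in the middle step: the paper splits the range of $t$ into $[0,t_0]$ and $[t_0,\infty)$, proves an auxiliary claim about $d^+$ (their Claim involving $\Lambda_2$, which uses the first solution $u_1$), and then combines the two pieces; you instead substitute the critical-point relation $A(t^-)^p=B(t^-)^{p^*(a,b)}+\lambda D(t^-)^q$ directly into $\varphi_\lambda(t^-v)$ to get the clean identity
\[
\varphi_\lambda(t^-v)=\Bigl(\tfrac1p-\tfrac1{p^*(a,b)}\Bigr)B(t^-)^{p^*(a,b)}-\Bigl(\tfrac1q-\tfrac1p\Bigr)\lambda D(t^-)^q,
\]
and then use $t^-\leqslant t_0=(A/B)^{1/(p^*(a,b)-p)}$.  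This is shorter, bypasses the split and the extra claim, and yields $\Lambda_*$ from a single smallness condition rather than as $\min\{\Lambda_1,\Lambda_2,\Lambda_3\}$.  Two small remarks: the uniform lower bound on $(t^-)^q$ is even easier than you indicate, since $t^->t_{\max}>0$ for all $\lambda\in(0,\Lambda_1)$, so no ``$t_0/2$'' argument is needed; and your final caveat about truncated near-minimisers is unnecessary in this setting, because the paper invokes Bhakta's result to obtain an exact minimiser of~\eqref{problemagazzinimusina}, which is precisely what makes your short computation for the critical level go through.
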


\begin{proof}
Using a result by Bhakta~\cite{MR2934676}, 
there exists a function 
$w \in \mathcal{D}\sb{a}\sp{1,p}(\mathbb{R}\sp{N} \backslash \{|y|=0\})$ 
that assumes the infimum~\eqref{problemagazzinimusina}. 
Let $ (x,y), (x_0,y_0) \in \mathbb{R}^{N-k}\times\mathbb{R}^k$; let the function
\( w_\epsilon \in \mathcal{D}\sb{a}\sp{1,p}(\mathbb{R}\sp{N} \backslash \{|y|=0\})\) 
be defined by
\begin{align*}
w_\epsilon (x,y) \equiv \epsilon\sp{-[N-p(a+1)]/p}\, 
u \left( x_0 + x/\epsilon , y/\epsilon \right).
\end{align*}
The three integrals involved in the definition of the infimum~\eqref{problemagazzinimusina}
are invariant by the action of this group of transformations; for that reason, the function $w_\epsilon$ also assumes the 
infimum~\eqref{problemagazzinimusina}. 
Moreover, we can suppose without loss of generality that
\begin{align}\label{relacoesnorma}
\|w_\epsilon\|^p = \|w_\epsilon\|_{L^{p^*(a,b)}_b(\mathbb{R}^N)}^{p^*(a,b)} =  K(N,p,\mu,a,b)^{-\frac{p^*(a,b)}{p^*(a,b)-p}}.
\end{align}

Using hypothesis~\eqref{hipoteseg2} we consider the function
$\psi_\epsilon \colon \mathbb{R}^N \to \mathbb{R}$ defined by
\begin{align*}
\psi_\epsilon(z) & \equiv
\begin{cases}
w_\epsilon(z),     
& \text{if $g(z)\geqslant 0$ for all $z \in \mathbb{R}^N$};\\
w_\epsilon(z-z_0), 
& \text{if there exist $z_0,\, z_1 \in \mathbb{R}^N$ such that $g(z_0) > 0$ 
and $g(z_1)<0$.}
\end{cases}
\end{align*}

\begin{afirmativa}\label{positividadedotermocomg}
There exists $\epsilon_0 > 0 $ such that
\begin{align}\label{bouchekif4.6}
\lambda\int\sb{\mathbb{R}^N}\frac{g|\psi_\epsilon|^{q}}{|y|^{cp^*(a,c)}}\, dz > 0
\quad \text{for every } \epsilon \in  (0,\epsilon_0).
\end{align}
\end{afirmativa}
\begin{proof}[Proof of the claim]
If $g(z)\geqslant 0$ for every $z \in \mathbb{R}^N$, 
then by the definiton of $\psi_\epsilon(z)$ 
the inequality~\eqref{bouchekif4.6} is valid 
independently of $\epsilon_0$. 
If there exist $z_0,\, z_1 \in \mathbb{R}^N$ such that 
$g(z_0) > 0$ and $g(z_1) < 0$, 
then by hypothesis~\eqref{hipoteseg2} 
there exist $\nu_0>0$ and $r_0 >0$ such that $g(z) >\nu_0$ 
for every $z \in B_{r_0}(0)$. 
By the above mentioned invariance properties of the integrals of $w_\epsilon$ involved in the definition of the infimum~\eqref{problemagazzinimusina},
this function concentrates in the ball 
$B_{r_\epsilon}(z_0)$ for $\epsilon_0=\epsilon_0(r_0)$ small enough. And this implies that 
\begin{align*}
\lambda\int\sb{\mathbb{R}^N}\frac{g|\omega_\epsilon(z-z_0)|^{q}}{|y|^{cp^*(a,c)}}\, dz 
= \lambda\int\sb{\mathbb{R}^N}\frac{g|\psi_\epsilon|^{q}}{|y|^{cp^*(a,c)}}\, dz> 0
\quad \textrm{for every } \epsilon \in  (0,\epsilon_0),
\end{align*}
which concludes the proof of the claim.
\end{proof}

Now we define the functions
$f_1, f_2 \colon \mathbb{R}_*^+ \to \mathbb{R}$,
respectively, by
\begin{align*}
f_1(t) \equiv \varphi_\lambda(t\psi_\epsilon)
\quad \textrm{and} \quad
f_2(t) \equiv \frac{t^p}{p}\|\psi_\epsilon\|^p
-\frac{t^{p^*(a,b)}}{p^*(a,b)}\int\sb{\mathbb{R}^N}\frac{h|\psi_\epsilon|^{p^*(a,b)}}{|y|^{bp^*(a,b)}}\, dz.
\end{align*}
Then, by hypothesis~\eqref{hipoteseh2} and since $\psi_\epsilon$ assumes
the infimum~\eqref{problemagazzinimusina}, it follows that
\begin{align}\label{bouchekif4.6b}
\max_{t\geqslant 0}f_2(t)
&=\left(\frac{1}{p}-\frac{1}{p^*(a,b)}\right)
\left(\|\psi_\epsilon\|^{p}\left(\int\sb{\mathbb{R}^N}\frac{h|\psi_\epsilon|^{p^*(a,b)}}{|y|^{bp^*(a,b)}}\, dz\right)^{-\frac{p}{p^*(a,b)}}\right)^\frac{p^*(a,b)}{p^*(a,b)-p}\nonumber\\
&\leqslant \left(\frac{1}{p}-\frac{1}{p^*(a,b)}\right)h_0^{-\frac{p}{p^*(a,b)-p}}K(N,p,\mu,a,b)^{-\frac{p^*(a,b}{p^*(a,b)-p}}.
\end{align}

\begin{afirmativa}\label{caoremark2.2}
There exists $\Lambda_2 > 0$ such that for every
$\lambda \in (0,\Lambda_2)$, it is valid the inequality
\begin{align*}
-\lambda\left(\frac{1}{q}-\frac{1}{p^*(a,b)}\right)\left(1-\frac{q}{p}\right)K(N,p,\mu,a,c)^\frac{q}{p}\|g\|_{L\sb{\alpha}\sp{r}(\mathbb{R}\sp{N})}^\frac{p}{p-q}
\leqslant d^+<0.
\end{align*}
\end{afirmativa}

\begin{proof}[Proof of the claim]
By Lemma~\ref{bouchekiflema2.6}~\ref{bouchekiflema2.6(i)}, 
we have $d^+<0$ for every $\lambda \in (0,\Lambda_1)$. 
By Proposition~\ref{bouchekifprop3.2} there exists a solution 
$u_1 \in \mathcal{D}\sb{a}\sp{1,p}(\mathbb{R}\sp{N} \backslash \{|y|=0\})$ 
to problem~\eqref{problemamultiplicidade} such that 
$\varphi_\lambda(u_1) = d^+$. 
Using equation~\eqref{phineharig} 
and also H\"{o}lder's, Maz'ya's and Young's inequalities, we deduce that
\begin{align*}
d^+ 
& =\left(\frac{1}{p}-\frac{1}{p^*(a,b)}\right)\|u_1\|^p -\lambda\left(\frac{1}{q}-\frac{1}{p^*(a,b)}\right)\int\sb{\mathbb{R}^N}\frac{g|u_1|^{q}}{|y|^{cp^*(a,c)}}\, dz\\
&\geqslant \left[\left(\frac{1}{p}-\frac{1}{p^*(a,b)}\right)- \lambda\left(\frac{1}{q}-\frac{1}{p^*(a,b)}\right)\frac{q}{p}K(N,p,\mu,a,c)^\frac{q}{p}\right]\|u_1\|^p\\
& \qquad - \lambda\left(\frac{1}{q}-\frac{1}{p^*(a,b)}\right)\left(1-\frac{q}{p} \right)K(N,p,\mu,a,c)^\frac{q}{p}\|g\|_{L\sb{\alpha}\sp{r}(\mathbb{R}\sp{N})}^\frac{p}{p-q}.
\end{align*}
Hence, there exists $\Lambda_2 > 0$ such that the claim holds.
\end{proof}

Applying Claim~\ref{caoremark2.2}, there exists $\Lambda_2 > 0$ such that
for every $\lambda \in (0,\Lambda_2)$ it is valid the inequality
\begin{align*}
&\left(\frac{1}{p}-\frac{1}{p^*(a,b)}\right)h_0^{-\frac{p}{p^*(a,b)-p}}K(N,p,\mu,a,b)^{-\frac{p^*(a,b)}{p^*(a,b)-p}} + d^+\\
&\qquad \geqslant \left(\frac{1}{p}-\frac{1}{p^*(a,b)}\right)h_0^{-\frac{p}{p^*(a,b)-p}}K(N,p,\mu,a,b)^{-\frac{p^*(a,b)}{p^*(a,b)-p}}\\
& \qquad \qquad - \lambda\left(\frac{1}{q}-\frac{1}{p^*(a,b)}\right)\left(1-\frac{q}{p} \right)K(N,p,\mu,a,c)^\frac{q}{p}\|g\|_{L\sb{\alpha}\sp{r}(\mathbb{R}\sp{N})}^\frac{p}{p-q}\\
& \qquad  >0
\end{align*}

By the continuity of the function $f_1$ and by 
inequalities~\eqref{bouchekif4.6} and~\eqref{bouchekif4.6b},
there exists a number $t_0 >0$ small enough and, 
what is crucial to our analysis,
independent of $\lambda$ and of $g$, 
such that
\begin{align}\label{bouchekif4.6c}
\sup_{0 \leqslant t \leqslant t_0} f_1(t) 
&\leqslant \sup_{t\geqslant 0} f_2(t) - \sup_{0 \leqslant t \leqslant t_0}\frac{\lambda}{q}t^q\int\sb{\mathbb{R}^N}\frac{g|\psi_\epsilon|^{q}}{|y|^{cp^*(a,c)}}\, dz\nonumber\\
&\leqslant \left(\frac{1}{p}-\frac{1}{p^*(a,b)}\right)h_0^{-\frac{p}{p^*(a,b)-p}}K(N,p,\mu,a,b)^{-\frac{p^*(a,b)}{p^*(a,b)-p}}
- \frac{\lambda}{q}t_0^q\int\sb{\mathbb{R}^N}\frac{g|\psi_\epsilon|^{q}}{|y|^{cp^*(a,c)}}\, dz.
\end{align}

Now we are going to estimate
$\sup_{t \geqslant t_0} f_1(t)$. 
By hypothesis~\eqref{hipoteseh2} and using equation~\eqref{relacoesnorma}, we deduce that
\begin{align*}
f_1(t) 
&\leqslant \frac{t^p}{p}\|\psi_\epsilon\|^p
-\frac{t^{p^*(a,b)}}{p^*(a,b)}h_0\int\sb{\mathbb{R}^N}\frac{|\psi_\epsilon|^{p^*(a,b)}}{|y|^{bp^*(a,b)}}\, dz
-\lambda\frac{t^{q}}{q}\int\sb{\mathbb{R}^N}\frac{g|\psi_\epsilon|^{q}}{|y|^{cp^*(a,c)}}\, dz\\
&= \frac{t^p}{p}K(N,p,\mu,a,b)^{-\frac{p^*(a,b)}{p^*(a,b)-p}}
-\frac{t^{p^*(a,b)}}{p^*(a,b)}h_0K(N,p,\mu,a,b)^{-\frac{p^*(a,b)}{p^*(a,b)-p}}-\lambda\frac{t_0^{q}}{q}\int\sb{\mathbb{R}^N}\frac{g|\psi_\epsilon|^{q}}{|y|^{cp^*(a,c)}}\, dz.
\end{align*}
Hence, for every $\epsilon \in (0,\epsilon_0)$, we have
\begin{align*}
\sup_{t \geqslant t_0} f_1(t) 
&\leqslant \sup_{t \geqslant t_0} 
\left( \frac{t^p}{p} 
-\frac{t^{p^*(a,b)}}{p^*(a,b)}
h_0 \right) K(N,p,\mu,a,b)^{-\frac{p^*(a,b)}{p^*(a,b)-p}}
-\lambda\frac{t_0^{q}}{q}\int\sb{\mathbb{R}^N}\frac{g|\psi_\epsilon|^{q}}{|y|^{cp^*(a,c)}}\, dz\\
&\leqslant 
\sup_{t \geqslant 0} 
\left(\frac{t^p}{p}-\frac{t^{p^*(a,b)}}{p^*(a,b)}h_0\right)K(N,p,\mu,a,b)^{-\frac{p^*(a,b)}{p^*(a,b)-p}}
-\lambda\frac{t_0^{q}}{q}\int\sb{\mathbb{R}^N}\frac{g|\psi_\epsilon|^{q}}{|y|^{cp^*(a,c)}}\, dz.
\end{align*}

To estimate this supremum, we define the function
$f_3 \colon \mathbb{R}_*^+ \to \mathbb{R}$ by
\begin{align*}
f_3(t) 
&\equiv 
\left(\frac{t^p}{p}-\frac{t^{p^*(a,b)}}{p^*(a,b)}h_0\right)K(N,p,\mu,a,b)^{-\frac{p^*(a,b)}{p^*(a,b)-p}}.
\end{align*}
Hence, we deduce that
\begin{align*}
\max_{t\geqslant 0} f_3(t) = \left(\frac{1}{p}-\frac{1}{p^*(a,b)}\right)h_0^{-\frac{p}{p^*(a,b)-p}}K(N,p,\mu,a,b)^{-\frac{p^*(a,b)}{p^*(a,b)-p}}.
\end{align*}
Therefore,
\begin{align*}
\sup_{t \geqslant t_0} f_1(t) 
&\leqslant \left(\frac{1}{p}-\frac{1}{p^*(a,b)}\right)h_0^{-\frac{p}{p^*(a,b)-p}}K(N,p,\mu,a,b)^{-\frac{p^*(a,b)}{p^*(a,b)-p}}
- \frac{\lambda}{q}t_0^q\int\sb{\mathbb{R}^N}\frac{g|\psi_\epsilon|^{q}}{|y|^{cp^*(a,c)}}\, dz.
\end{align*}

Combining the previous inequality with inequality~\eqref{bouchekif4.6c}, 
we determine
\begin{align}\label{bouchekif4.6cfinal}
\sup_{t \geqslant 0} f_1(t) 
&\leqslant \left(\frac{1}{p}-\frac{1}{p^*(a,b)}\right)h_0^{-\frac{p}{p^*(a,b)-p}}K(N,p,\mu,a,b)^{-\frac{p^*(a,b)}{p^*(a,b)-p}}
- \frac{\lambda}{q}t_0^q\int\sb{\mathbb{R}^N}\frac{g|\psi_\epsilon|^{q}}{|y|^{cp^*(a,c)}}\, dz.
\end{align}

Following up, let $\lambda \in \mathbb{R}$ be chosen so that
\begin{align*}
0 < \lambda < \left(\frac{t_0^q}{qC_0}\sup_{0<\epsilon<\epsilon_0}\int\sb{\mathbb{R}^N}\frac{g|\psi_\epsilon|^{q}}{|y|^{cp^*(a,c)}}\, dz\right)^\frac{p-q}{q} \equiv \Lambda_3.
\end{align*}
As a consequence of this choice,
 for every $\lambda \in (0,\Lambda_3)$ we have
\begin{align}\label{bouchekif4.6d}
- \frac{\lambda}{q}t_0^q\sup_{0<\epsilon<\epsilon_0}\int\sb{\mathbb{R}^N}\frac{g|\psi_\epsilon|^{q}}{|y|^{cp^*(a,c)}}\, dz
< - C_0 \lambda^\frac{p}{p-q}.
\end{align}

Finally, we define
\begin{align*}
\Lambda_* \equiv \min\left\{\Lambda_1, \Lambda_2, \Lambda_3 \right\}.
\end{align*}
Therefore, by inequalities~\eqref{bouchekif4.6cfinal} and~\eqref{bouchekif4.6d}, 
and by the definition of $d_\lambda^*$, 
for every $\lambda\in(0,\Lambda_*)$ we obtain
\begin{align*}
\sup_{t \geqslant 0} \varphi_\lambda(t\psi_\epsilon)
&\leqslant \left(\frac{1}{p}-\frac{1}{p^*(a,b)}\right)h_0^{-\frac{p}{p^*(a,b)-p}}K(N,p,\mu,a,b)^{-\frac{p^*(a,b)}{p^*(a,b)-p}}
- \frac{\lambda}{q}t_0^q\int\sb{\mathbb{R}^N}\frac{g|\psi_\epsilon|^{q}}{|y|^{cp^*(a,c)}}\, dz\nonumber\\
&\leqslant \left(\frac{1}{p}-\frac{1}{p^*(a,b)}\right)h_0^{-\frac{p}{p^*(a,b)-p}}K(N,p,\mu,a,b)^{-\frac{p^*(a,b)}{p^*(a,b)-p}}
- C_0\lambda^\frac{p}{p-q}\nonumber\\
&= d_\lambda^*.
\end{align*}

It remains to prove that
$d^- < d_\lambda^*$ 
for every $\lambda\in (0,\Lambda_*)$.
By Claim~\ref{positividadedotermocomg}, by Lemma~\ref{bouchekiflema2.7}~\ref{bouchekiflema2.7(ii)},
by the definition of $d^-$, and using the previous inequality regarding $d_\lambda^*$, we obtain the existence of a sequence
$(t_{\epsilon_n})_{n\in \mathbb{N}} \subset \mathbb{R}^+$ such that $t_{\epsilon_n}\omega_{\epsilon_n}\in \mathcal{N}_\lambda^-$ and
\begin{align*}
d^- \leqslant \varphi_\lambda(t_{\epsilon_n}\omega_{\epsilon_n}) \leqslant \sup_{t\geqslant 0} \varphi_\lambda(t_{\epsilon_n}\omega_{\epsilon_n}) < d_\lambda^*,
\end{align*}
for every $\lambda \in (0,\Lambda_*)$. This concludes the proof of the lemma.
\end{proof}

Now we establish the existence of a minimum for the functional 
$\varphi_\lambda$ in $\mathcal{N}_\lambda^-$.

\begin{prop}\label{bouchekifprop4.4}
Let $\Lambda_0 \equiv \min\{(q/p)\Lambda_1,\Lambda_*\}$. 
If $\lambda \in  (0,\Lambda_0)$, then
the functional $\varphi_\lambda$ 
has a positive minimizer $u_2 \in \mathcal{N}_\lambda^-$ 
such that 
$\varphi_\lambda(u_2) = d^-$
and the function $u_2$ is a solution to 
problem~\eqref{problemamultiplicidade} in $\mathcal{D}\sb{a}\sp{1,p}(\mathbb{R}\sp{N} \backslash \{|y|=0\})$.
\end{prop}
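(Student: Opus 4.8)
The plan is to run the Nehari--minimization argument on the component $\mathcal{N}_\lambda^-$, recovering compactness from the energy estimate $d^- < d_\lambda^*$. Fix $\lambda \in (0,\Lambda_0)$ with $\Lambda_0 = \min\{(q/p)\Lambda_1,\Lambda_*\}$. Since $\lambda < (q/p)\Lambda_1$, Proposition~\ref{hsuprop3.3}~\ref{hsuprop3.3(ii)} provides a Palais--Smale sequence $(u_n)_{n\in\mathbb{N}} \subset \mathcal{N}_\lambda^-$ with $\varphi_\lambda(u_n)\to d^-$ and $\varphi_\lambda'(u_n)\to 0$ in $(\mathcal{D}_a^{1,p}(\mathbb{R}^N\backslash\{|y|=0\}))^*$. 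By Lemma~\ref{bouchekiflema2.6}~\ref{bouchekiflema2.6(ii)} one has $d^- > C_1 > 0$, and since $\lambda < \Lambda_*$, Lemma~\ref{bouchekiflema4.3} gives $d^- < d_\lambda^*$. Hence $(u_n)$ is a Palais--Smale sequence at a level lying in the compact range $(0,d_\lambda^*)$, and the hypotheses~\eqref{hipoteseg1},~\eqref{hipoteseg2},~\eqref{hipoteseh1},~\eqref{hipoteseh2} are in force.

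Next I would invoke Lemma~\ref{bouchekiflema4.2}: along a subsequence, still denoted $(u_n)$, there is $u_2 \in \mathcal{D}_a^{1,p}(\mathbb{R}^N\backslash\{|y|=0\})$ with $u_n \to u_2$ strongly. Strong convergence lets one pass to the limit in the expression for $\varphi_\lambda$ and in $\langle\varphi_\lambda'(\cdot),v\rangle$, yielding $\varphi_\lambda(u_2)=d^-$ and $\varphi_\lambda'(u_2)=0$; in particular $u_2$ is a weak solution of~\eqref{problemamultiplicidade}. Because $d^- > 0 = \varphi_\lambda(0)$, the function $u_2$ is nontrivial, so $\langle\varphi_\lambda'(u_2),u_2\rangle = 0$ forces $u_2 \in \mathcal{N}_\lambda$.

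Then I would locate $u_2$ in the correct stratum. The functional $u \mapsto \langle\psi_\lambda'(u),u\rangle$ is continuous on $\mathcal{D}_a^{1,p}(\mathbb{R}^N\backslash\{|y|=0\})$ by the explicit formula~\eqref{bouchekif2.4A} together with Maz'ya's embedding, so $\langle\psi_\lambda'(u_n),u_n\rangle < 0$ passes to the limit as $\langle\psi_\lambda'(u_2),u_2\rangle \leqslant 0$; since $\lambda < \Lambda_1$, Lemma~\ref{bouchekiflema2.5} gives $\mathcal{N}_\lambda^0 = \emptyset$, hence $\langle\psi_\lambda'(u_2),u_2\rangle \neq 0$ and therefore $< 0$, i.e.\ $u_2 \in \mathcal{N}_\lambda^-$. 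One may equivalently rule out $u_2 \in \mathcal{N}_\lambda^+$ by the fibering argument of Lemma~\ref{bouchekiflema2.7}~\ref{bouchekiflema2.7(ii)}, exactly as in the proof of Proposition~\ref{bouchekifprop3.2}. Consequently $\varphi_\lambda(u_2) = d^- = \inf_{\mathcal{N}_\lambda^-}\varphi_\lambda$, so $u_2$ is a minimizer of $\varphi_\lambda$ on $\mathcal{N}_\lambda^-$. Finally, since $\varphi_\lambda(|u_2|) = \varphi_\lambda(u_2)$ and $|u_2| \in \mathcal{N}_\lambda^-$, I may replace $u_2$ by $|u_2|$ and assume $u_2 \geqslant 0$, $u_2 \not\equiv 0$; the strong maximum principle applied to the differential inequality satisfied by $u_2$ then yields $u_2 > 0$. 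As $\mathcal{N}_\lambda^- \cap \mathcal{N}_\lambda^+ = \emptyset$, this $u_2$ is distinct from the solution $u_1$ of Theorem~\ref{teoremamultiplicidade1}, which completes the proof of Theorem~\ref{teoremamultiplicidade2}.

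The step I expect to be the genuine obstacle is the pair Lemma~\ref{bouchekiflema4.3}--Lemma~\ref{bouchekiflema4.2}: obtaining the strict inequality $d^- < d_\lambda^*$ requires a sharp choice of a concentrating test function $\psi_\epsilon$ supported where $g$ is positive, so that the subcritical contribution $\int g|\psi_\epsilon|^q|y|^{-cp^*(a,c)}\,dz$ stays uniformly positive while $\lambda$ is shrunk, and the compactness recovery below $d_\lambda^*$ demands a Br\'{e}zis--Lieb decomposition of the weighted critical term together with the behaviour of $h$ at the origin and at infinity. Everything before and after these two lemmas is a routine assembly of the auxiliary results.
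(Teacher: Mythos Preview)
Your proposal is correct and follows essentially the same route as the paper: obtain a Palais--Smale sequence in $\mathcal{N}_\lambda^-$ from Proposition~\ref{hsuprop3.3}\ref{hsuprop3.3(ii)}, use Lemma~\ref{bouchekiflema4.3} to place $d^-$ below $d_\lambda^*$, recover strong convergence via Lemma~\ref{bouchekiflema4.2}, and conclude with the positivity argument by passing to $|u_2|$. Your explicit verification that the limit $u_2$ actually lies in $\mathcal{N}_\lambda^-$ (via continuity of $\langle\psi_\lambda'(\cdot),\cdot\rangle$ and $\mathcal{N}_\lambda^0=\emptyset$) is a detail the paper leaves implicit, and your invocation of the strong maximum principle for strict positivity is slightly more than the paper states, but neither constitutes a different approach.
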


\begin{proof}
By Proposition~\ref{hsuprop3.3}~\ref{hsuprop3.3(ii)},
if $\lambda\in(0,(q/p)\Lambda_1)$, then there exists a 
Palais-Smale sequence $(u_n)_{n \in \mathbb{N}}\subset \mathcal{N}_\lambda^-$ 
at the level $d^-$ for the functional $\varphi_\lambda$. 
From Lemmas~\ref{bouchekiflema4.2},~\ref{bouchekiflema4.3} 
and~\ref{bouchekiflema2.6}~\ref{bouchekiflema2.6(ii)},
for every $\lambda\in(0,\Lambda_*)$
the functional $\varphi_\lambda$ 
verifies the Palais-Smale condition at the level $d^- > 0$. 
Hence, the sequence $(u_n)_{n \in \mathbb{N}} $ 
is bounded in 
$\mathcal{D}\sb{a}\sp{1,p}(\mathbb{R}\sp{N} \backslash \{|y|=0\})$,
and there exists a subsequence, still denoted in the same way,
and there exists a function $u_2 \in \mathcal{N}_\lambda^-$
such that $u_n \to u_2$ strongly in 
$\mathcal{D}\sb{a}\sp{1,p}(\mathbb{R}\sp{N} \backslash \{|y|=0\})$
as $n\to \infty$ 
with $\varphi_\lambda(u_2) = d^-$. 
Finally, using the same arguments from the
proof of Proposition~\ref{bouchekifprop3.2} 
and the facts that 
$|u_2| \in \mathcal{N}_\lambda^-$ 
and $\varphi_\lambda(u_2) = \varphi_\lambda(|u_2|)$, 
we deduce that for every $\lambda \in (0,\Lambda_1)$
the function $u_2$ is a positive solution to
problem~\eqref{problemamultiplicidade}.
\end{proof}

\begin{proof}[Proof of Theorem~\ref{teoremamultiplicidade2}]
By Propositions~\ref{bouchekifprop3.2} and~\ref{bouchekifprop4.4}, 
we obtain the positive solutions $u_1$ and $u_2$ 
to problem~\eqref{problemamultiplicidade} such that
$u_1 \in \mathcal{N}_\lambda^+$ and $u_2 \in \mathcal{N}_\lambda^-$.
And since $\mathcal{N}_\lambda^+ \cap \mathcal{N}_\lambda^- = \emptyset$,
we conclude that $u_1$ and $u_2$ are distinct solutions.
\end{proof}

\bibliographystyle{abbrv}

\bibliography{bibtese}

\end{document}